\theoremstyle{plain}
\newtheorem{theorem}{Theorem}
\newtheorem{lemma}[theorem]{Lemma}
\newtheorem{proposition}[theorem]{Proposition}
\newtheorem{definition}[theorem]{Definition}
\newtheoremstyle{note}{\topsep}{\topsep}{\slshape}{}{\scshape}{}{ }{}
\theoremstyle{note}
\newtheorem{remark}[theorem]{Remark}
\newtheoremstyle{warning}{\topsep}{\topsep}{\color{Blue}\small\slshape}{}{\color{Red}\scshape}{.}{ }{}
\theoremstyle{warning}
\numberwithin{equation}{section}
\numberwithin{theorem}{section}
\newcommand\cM{{\mathcal M}}
\newcommand\scM{{\mathscr M}}
\newcommand\scO{{\mathscr O}}
\newcommand\mvector{\boldsymbol}
\newcommand\vb{\mvector{b}}
\newcommand\vc{\mvector{c}}
\newcommand\vd{\mvector{d}}
\newcommand\vf{\mvector{f}}
\newcommand\vp{\mvector{p}}
\newcommand\vq{\mvector{q}}
\newcommand\vv{\mvector{v}}
\newcommand\vx{\mvector{x}}
\newcommand\vz{\mvector{z}}
\newcommand\vA{\mvector{A}}
\newcommand\vB{\mvector{B}}
\newcommand\vC{\mvector{C}}
\newcommand\vE{\mvector{E}}
\newcommand\vF{\mvector{F}}
\newcommand\vR{\mvector{R}}
\newcommand\vX{\mvector{X}}
\newcommand\vZ{\mvector{Z}}
\newcommand\valpha{\mvector{\alpha}}
\newcommand\vgamma{\mvector{\gamma}}
\newcommand\vGamma{\mvector{\Gamma}}
\newcommand\vomega{\mvector{\omega}}
\newcommand\vzero{\mvector{0}}
\newcommand\vveps{\mvector{\varepsilon}}
\newcommand\field{\mathbb}
\newcommand\C{\field{C}}
\newcommand\Z{\field{Z}}
\newcommand\M{\field{M}}
\newcommand\N{\field{N}}
\newcommand\Q{\field{Q}}
\newcommand\vspan{\operatorname{span}}
\newcommand\diag{\operatorname{diag}}
\newcommand\grad{\operatorname{grad}}
\newcommand\Sol{\operatorname{Sol}}
\newcommand\trdeg{\operatorname{tr.\!deg}}
\newcommand\id{\operatorname{\mathrm{Id}}}
\newcommand\gal{\operatorname{Gal}}
\newcommand\rmd{\mathrm{d}}
\newcommand\VE[1]{\ensuremath{\mathrm{VE}_{#1}}}
\newcommand\EX[1]{\ensuremath{\mathrm{EX}_{#1}}}
\newcommand\PV[1]{\ensuremath{\mathrm{PV}{#1}}}
\newcommand\rmi{\mathrm{i}\mspace{1mu}}
\newcommand\rme{\mathrm{e}}
\newcommand\Dt{\frac{\mathrm{d}\phantom{t} }{\mathrm{d}\mspace{1mu}
t}}
\newcommand\pder[2]{\dfrac{\partial #1 }{\partial #2}}
\newcommand\abs[1]{\lvert #1 \rvert}
\let\tg\@undefined 
\let\ctg\@undefined 
\let\tgh\@undefined 
\let\ctgh\@undefined 
\let\nwd\@undefined 
\let\arc\@undefined
\let\arsin\@undefined  
\DeclareMathOperator{\tg}{tg}
\DeclareMathOperator{\ctg}{ctg}
\DeclareMathOperator{\tgh}{tgh}
\DeclareMathOperator{\ctgh}{cth}
\DeclareMathOperator{\nwd}{nwd}
\newcommand\arc[1]{\operatorname{arc#1}}
\newcommand\arsin{\arc{\sin}}
\DeclareMathOperator{\tg}{tg}
\DeclareMathOperator{\ctg}{ctg}
\DeclareMathOperator{\tgh}{tgh}
\DeclareMathOperator{\ctgh}{cth}
\DeclareMathOperator{\nwd}{nwd}
\newcommand\arc[1]{\operatorname{arc#1}}
\newcommand\arsin{\arc\sin}
\newcommand\bfi[1]{{\bfseries\itshape{#1}}}
\newcommand\mtext[1]{\quad\text{#1}\quad}
\newcommand\mnote[1]{\marginpar{\tiny{#1}}}
\newcommand\defset[2]{\left\{{#1}\;\vert \;\; {#2} \,\right\}}
\begin{document}
%
%
\mathtoolsset{%
mathic,centercolon
}
%
%
\thispagestyle{empty}
\vspace*{1em}
\begin{center}
  \Large\textsc{Integrability of Hamiltonian
    systems with homogeneous potentials of degrees $\pm 2$. An application of
    higher order variational equations}
\end{center}
\vspace*{0.5em}
\begin{center}
\large  Guillaume Duval$^1$ and Andrzej J.~Maciejewski$^2$
\end{center}
\vspace{1.0em}
\hspace*{2em}\begin{minipage}{0.8\textwidth}
\small
$^1$Laboratoire de Mathématiques et d'Informatique (LMI),\\
INSA de Rouen,
Avenue de l'Université, \\
76 801 Saint Etienne du Rouvray Cedex,
France. \\
E-mail: guillaume.duval@insa-rouen.fr\\[1em]
$^2$J. Kepler Institute of Astronomy,
 University of Zielona G\'ora, \\
 Licealna 9, PL-65--417,  Zielona G\'ora, Poland.\\
E-mail: maciejka@astro.ia.uz.zgora.pl
\end{minipage}\\[2em]
\vspace*{1.5em}
\today

{   \small \centerline{\textsc{Abstract}}}
\begin{minipage}{0.9\textwidth}
  \small The present work is the first of a serie of two papers, in
  which we analyse the higher variational equations associated to
  natural Hamiltonian systems, in their attempt to give Galois
  obstruction to their integrability.  We show that the higher
  variational equations $VE_{p}$ for $p\geq 2$, although complicated
  they are, have very particular algebraic structure. Preceisely they
  are solvable if $VE_{1}$ is virtually Abelian since they are
  solvable inductively by what we call the \emph{second level
    integrals}. We then give necessary and sufficient conditions in
  terms of these second level integrals for $VE_{p}$ to be virtually
  Abelian (see Theorem~\ref{thm:dudu}).  Then, we apply the above to
  potentials of degree $k=\pm 2$ by considering their $VE_{p}$ along
  Darboux points.  And this because their $VE_{1}$ does not give any
  obstruction to the integrablity. In Theorem~\ref{thm:oscillator}, we
  show that under non-resonance conditions, the only degree two
  integrable potential is the \emph{harmonic oscillator}. In contrast
  for degree $-2$ potentials, all the $VE_{p}$ along Darboux points
  are virtually Abelian (see Theorem~\ref{thm:k=-2}).
\end{minipage}

{\flushleft \textbf{key words}: Hamiltonian systems; integrability;
  differential Galois theory;}
{\flushleft {\textbf{MSC2000 numbers}:} 37J30, 70H07, 37J35, 34M35.}
\vspace{1.5em}

\section{Introduction}
\label{sec:intro}
Our aim in this paper is to developed methods and tools which allow
effective investigation of the integrability of complex Hamiltonian
through the analysis of the differential Galois group of their higher
order variational equations along a particular solution. This approach
is described in, e.g., \cite{Churchill:96::b,Audin:01::c}. The most
general result was obtained in ~\cite{Morales:07::}. It gives
necessary conditions for the complete meromorphic integrability of a
meromorphic Hamiltonian system defined on a complex analytic
symplectic manifold $M^{2n}$. These conditions are expressed in the
following result.
\begin{theorem}[Morales-Ramis-Sim\'o]
  \label{thm:MRS}
  Assume that a meromorphic Hamiltonian system is integrable in the
  Liouville sense with first integrals which are meromorphic in a
  connected neighbourhood $U$ of the phase curve $\Gamma$
  corresponding to a non-equilibrium solution, and are functionally
  independent in $U\setminus\Gamma$.  Then, for each $p\in\N$, the
  identity component $(G_p)^{\circ}$ of the differential Galois group
  $G_p$ of $p$-th order variational equations $\mathrm{VE}_p$ along
  $\vGamma$ is Abelian.
\end{theorem}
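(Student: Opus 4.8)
The plan is to deduce the statement from Liouville integrability by manufacturing enough first integrals of $\VE{p}$ and then invoking a structural dichotomy for connected algebraic groups. First I would fix a precise model for the higher variational equations: writing the phase flow as $x=\phi(t)+\xi$ around the solution parametrising $\vGamma$ and expanding the Hamiltonian vector field $X_H$ in powers of $\xi$, the terms collected up to order $p$ define $\VE{p}$ as a linear differential system over the field of (meromorphic) functions on $\vGamma$. It is convenient to regard $\VE{p}$ as the linearised flow on the space of $p$-jets, so that $\VE{1}$ is the ordinary linear Hamiltonian variational equation and each successive level is an extension of the previous one by the homogeneous part of the corresponding degree.

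Second, I would record the symplectic inheritance. Because the flow of $X_H$ preserves the symplectic form, its prolongation to jets preserves the induced structures; at the level of $\VE{1}$ this places $G_1$ inside $\mathrm{Sp}(2n,\C)$, and at higher levels it constrains $G_p$ to preserve the associated Poisson data on the jet space. This is the feature that lets the involution of first integrals be read off as a commutation statement about the group.

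Third — and this is where integrability enters — I would take the $n$ functionally independent meromorphic first integrals $F_1,\dots,F_n$ in involution and Taylor-expand each along $\vGamma$. The homogeneous components of these expansions are first integrals of $\VE{p}$ (their constancy along the variational flow is exactly the conservation of $F_i$), they remain functionally independent because the $F_i$ are independent off $\vGamma$, and the vanishing of the brackets $\{F_i,F_j\}$ propagates to vanishing of the Poisson brackets of the associated jet-integrals. Thus $\VE{p}$ carries a maximal family of independent, pairwise commuting rational first integrals.

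Finally I would convert this into a statement about $(G_p)^{\circ}$. By the Picard--Vessiot correspondence the rational invariants of the Galois group are exactly the functions lying in the base field, so the family produced above exhibits a Poisson-commutative algebra of invariants of the expected maximal size; the structural input is then a lemma asserting that a connected algebraic group acting on the symplectic jet space and admitting such a complete system of independent commuting invariants must be Abelian. I expect the genuine difficulty to lie precisely here: unlike the case $p=1$, where $G_1\subseteq\mathrm{Sp}(2n,\C)$ and the abelianity criterion is classical, for $p\ge 2$ the jet space carries no naive linear symplectic action, so one must track the filtered Poisson structure through the prolongation carefully, verify that the jet-integrals are genuinely independent and in involution with respect to it, and establish the abelianity lemma in this more delicate setting. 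The bookkeeping of the prolongation and the correct normalisation of the induced symplectic data on jets is the main obstacle.
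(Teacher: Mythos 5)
First, a point of comparison: the paper does not prove this theorem. It is quoted as the Morales--Ramis--Sim\'o theorem and its proof is explicitly deferred to the cited references (Morales-Ruiz, Ramis and Sim\'o; Casale), so there is no internal argument to measure yours against. Judged on its own, your sketch reproduces the broad strategy of the original proof --- jets of the flow give \VE{p}, jets of the first integrals give rational invariants of $G_p$, and involutivity of those invariants is what forces $(G_p)^{\circ}$ to be Abelian --- but it has genuine gaps at exactly the points that carry the mathematical weight.

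Concretely: (i) the claim that the homogeneous components of the Taylor expansions of $F_1,\dots,F_n$ along $\vGamma$ ``remain functionally independent because the $F_i$ are independent off $\Gamma$'' is false as stated; the leading jets of functionally independent functions can very well be dependent, and the original argument needs Ziglin's lemma (replacing the $F_i$ by polynomial combinations whose leading Taylor coefficients along $\vGamma$ are independent) to repair precisely this step. (ii) For $p\geq 2$ the system \VE{p} is non-homogeneous, so before one can even speak of a Picard--Vessiot extension and a group $G_p$ one must linearise it; Morales-Ruiz, Ramis and Sim\'o do this via the dual jet construction with symmetric powers, and your phrase ``regard \VE{p} as the linearised flow on the space of $p$-jets'' conceals both this construction and the verification that the resulting groups fit into the claimed tower. (iii) The final ``structural lemma'' --- that a connected algebraic group preserving the induced Poisson data on the jet space and admitting a maximal involutive family of rational invariants must be Abelian --- is the core of the theorem, and you assert it rather than prove it, as you yourself acknowledge. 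As it stands the proposal is an accurate road map of the known proof, not a proof.
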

For background material, detailed exposition and proof of the above
theorem we refer the reader to~\cite{Morales:07::}, and \cite{Guy09}.

Numerous successful applications of this theorem were obtained just by
dealing with the first variational equations $\VE{1}$. For an overview
of these results see, e.g.,
\citep{Morales:10::a,Maciejewski:09::b}. The \VE{p} with $p\geq2$ are
much more complicated systems than \VE{1}. This is why, at present
time no systematic studies of the higher variational equations have
been made. The aim of these two papers is, among other things, to
extract some general structure of the \VE{p} for a certain wide class
of systems.

In order to apply the above theorem one has to know a particular
solution of the considered system.  Generally it is not easy to find
such a particular solution of a given system of nonlinear differential
equations. This is the reason why so many efforts have been devoted to
natural Hamiltonian systems with homogeneous potentials, for which we
can find particular solutions in a systematic way.

Indeed, let us consider a class of Hamiltonian systems with $n$
degrees of freedom generated by a natural Hamiltonian function of the
form
\begin{equation*}
  H = \frac{1}{2}\sum_{i=1}^np_i^2 +V(\vq), \qquad \vq=(q_1,\ldots,q_n),
\end{equation*}
where $V$ is a homogeneous function of degree
$k\in\Z^{\star}:=\Z\setminus\{0\}$.  For such systems, the
corresponding Hamilton's equations have the canonical form
\begin{equation}
  \label{eq:heqs}
  \Dt \vq = \vp, \qquad \Dt \vp = -V'(\vq),
\end{equation}
where $V'(\vq):=\grad V(\vq)$.

Generally equations~\eqref{eq:heqs} admit particular solutions of the
following form. Let a non-zero vector $\vd\in\C^n$ satisfy
\begin{equation*}
  V'(\vd)=\gamma\vd, \mtext{where}\gamma\in\C^{\star}.
\end{equation*}
Such a vector is called a proper Darboux point of the potential $V$.
It defines a two dimensional plane in the phase space $\C^{2n}$, given
by
\begin{equation*}
  \Pi(\vd):= \defset{(\vq,\vp)\in\C^{2n}}{ \vq =\varphi\vd, \  \vp=\psi\vd,
    \quad (\varphi,\psi)\in\C^2 }.
\end{equation*}
This plane is invariant with respect to system~\eqref{eq:heqs}.
Equations~\eqref{eq:heqs} restricted to $\Pi(\vd)$ have the form of
one degree of freedom Hamilton's equations
\begin{equation}
  \label{eq:one}
  \Dt \varphi = \psi, \qquad \Dt \psi = -\gamma\varphi^{k-1},
\end{equation}
with the following phase curves
\begin{equation*}
  \Gamma_{k,e} :=
  \defset{(\varphi,\psi)\in\C^2}{ \frac{1}{2}\psi^2 +\frac{\gamma}{k}\varphi^k=
    e}\subset\C^2, \qquad e \in\C.
\end{equation*}
In this way, a solution $(\varphi,\psi)=(\varphi(t),\psi(t))$
of~\eqref{eq:one} gives rise to solution
$(\vq(t),\vp(t)):=(\varphi\vd,\psi\vd)$ of equations~\eqref{eq:heqs}
with the corresponding phase curve
\begin{equation}
  \vGamma_{k,e} :=
  \defset{(\vq,\vp)\in\C^{2n}}{ (\vq,\vp)=(\varphi \vd,\psi\vd),\ (\varphi,\psi)\in  
    \Gamma_{k,e } }\subset\Pi(\vd).
\end{equation}
In this context significant obstructions to the integrability were
obtained just by dealing with the first variational equations
$\VE{1}$. However, as we proved in \citep{Maciejewski:09::}, for
$k=\pm2$, no obstruction can be found on the level of $\VE{1}$, since
the differential Galois group of \VE{1} is virtually Abelian.

For $k=2$ our main result in is the following theorem.
\begin{theorem}
  \label{thm:oscillator}
  Let $V$ be a homogeneous potential of degree $k=2$ satisfying the
  following assumptions:
  \begin{enumerate}
  \item it has a proper Darboux point, i.e., there exits a non-zero
    vector $\vd$, such that $V'(\vd)=\gamma \vd$ with
    $\gamma\in\C^{\star}$;
  \item the Hessian matrix $\gamma^{-1}V''(\vd)$ is diagonalisable
    with eigenvalues
    \begin{equation*}
      \lambda_1=\omega_1^2, \ldots,  \lambda_n=\omega_n^2,
    \end{equation*}
    such that $\omega_1,\ldots, \omega_n$ are $\Z$-linearly
    independent;
  \item $V$ is integrable in the Liouville sense.
  \end{enumerate}
  Then
  \begin{equation*}
    \label{eq:Vo}
    V(\vq)=\frac{1}{2}\vq^TV''(\vd)\vq.
  \end{equation*}
  In other words, in the eigenbasis of $V''(\vd)$, the Hamiltonian has
  the following form
  \begin{equation*}
    \label{eq:Hosc}
    H=\frac{1}{2}\sum_{i=1}^n\left(p_i^2+\omega_i^2q_i^2\right).
  \end{equation*}
\end{theorem}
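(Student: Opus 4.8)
The plan is to exploit the structural criterion of Theorem~\ref{thm:dudu}: since for $k=2$ the first variational equation $\VE{1}$ is virtually Abelian, every higher equation $\VE{p}$ is solvable by the second level integrals, and its virtual Abelianity is governed by explicit conditions on those integrals. By the Morales--Ramis--Sim\'o theorem (Theorem~\ref{thm:MRS}), Liouville integrability forces $(G_p)^{\circ}$ to be Abelian for all $p$, so all the second level obstructions must vanish; the goal is to show that, under the $\Z$-linear independence hypothesis, this can happen only when $V$ has no Taylor coefficients of order $\geq 3$ at $\vd$.

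First I would normalise. Working in the eigenbasis of $V''(\vd)$ we have $V''(\vd)=\gamma\,\Lambda$ with $\Lambda=\diag(\omega_1^2,\ldots,\omega_n^2)$; Euler's identity gives $V''(\vd)\vd=V'(\vd)=\gamma\vd$, so $\vd$ is itself an eigendirection with frequency $1$. The particular solution is $\vq(t)=\varphi(t)\vd$ with $\ddot\varphi=-\gamma\varphi$ from~\eqref{eq:one}, which I fix to $\varphi(t)=\cos(\sqrt{\gamma}\,t)$ on a chosen energy level. Because $V$ is homogeneous of degree $2$, along this ray $V^{(m)}(\varphi\vd)=\varphi^{\,2-m}V^{(m)}(\vd)$; in particular $V''$ is constant along the curve, so $\VE{1}$ is the constant-coefficient system $\ddot{\vxi}=-V''(\vd)\vxi$, which decouples into harmonic oscillators $\ddot\xi_i=-\gamma\omega_i^2\xi_i$ with diagonal (toral) Galois group, reconfirming virtual Abelianity.

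Next I set up an induction on the order of the first non-quadratic Taylor coefficient. Suppose $V$ is not the quadratic form $\tfrac12\vq^{T}V''(\vd)\vq$, and let $m_0\geq 3$ be minimal with $V^{(m_0)}(\vd)\neq 0$. Then in $\VE{m_0-1}$ the only contribution beyond the homogeneous $\VE{1}$ part is the forcing $\varphi^{\,2-m_0}\,V^{(m_0)}(\vd)[\vxi_1,\ldots,\vxi_1]$ (with $m_0-1$ copies of the first-order solution), since all intermediate tensors $V^{(3)},\ldots,V^{(m_0-1)}$ vanish by minimality and order counting forces every factor to be first order. Writing $u=\mathrm{e}^{\mathrm{i}\sqrt{\gamma}\,t}$, so that $\varphi=\tfrac12(u+u^{-1})$ and each $\xi_1^{(j)}$ is a combination of $u^{\pm\omega_j}$, the associated second level integral of Theorem~\ref{thm:dudu} becomes a sum of residues of a rational-times-monomial expression in $u$; a logarithm, hence a unipotent (non-Abelian) contribution to $(G_{m_0-1})^{\circ}$, appears precisely when some resonant residue coefficient is nonzero. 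Each such coefficient is a linear functional of the components of $V^{(m_0)}(\vd)$, whose frequency-matching condition reads $\pm\omega_l\pm\omega_{j_1}\pm\cdots\pm\omega_{j_{m_0-1}}\in\{0,\pm 1\}$ (the values $\pm 1$ coming from the poles of $\varphi^{\,2-m_0}$).

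The main obstacle is exactly here: to turn integrability into the vanishing of $V^{(m_0)}(\vd)$ one must rule out accidental cancellations among these residue contributions. This is where the hypothesis that $\omega_1,\ldots,\omega_n$ are $\Z$-linearly independent is indispensable: it guarantees that distinct sign and index patterns yield distinct frequencies, so the resonance conditions isolate individual components of $V^{(m_0)}(\vd)$ and the vanishing of all second level obstructions decouples into the componentwise equations $V^{(m_0)}_{l j_1\cdots j_{m_0-1}}(\vd)=0$, contradicting $V^{(m_0)}(\vd)\neq0$. Hence no such $m_0$ exists and all Taylor coefficients of order $\geq 3$ vanish at $\vd$. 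Setting $W=V-\tfrac12\vq^{T}V''(\vd)\vq$, Euler's identity further gives $W(\vd)=0$, $W'(\vd)=0$ and $W''(\vd)=0$, so the full jet of $W$ at $\vd$ vanishes, whence $W\equiv0$ by analyticity and homogeneity. This yields $V(\vq)=\tfrac12\vq^{T}V''(\vd)\vq$, i.e.\ the harmonic oscillator $H=\tfrac12\sum_i(p_i^2+\omega_i^2q_i^2)$ in the eigenbasis, as claimed.
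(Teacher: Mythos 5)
Your overall strategy --- normalising at the Darboux point, taking the minimal order $m_0\geq 3$ with $V^{(m_0)}(\vd)\neq 0$, observing that $\VE{m_0-1}$ then carries only the single forcing term $\varphi^{2-m_0}V^{(m_0)}(\vd)(\vq_1,\ldots,\vq_1)$, and using the $\Z$-linear independence of the $\omega_i$ to separate the contributions of distinct index patterns --- is exactly the paper's. The gap is in the criterion you use to decide when a second level integral obstructs virtual Abelianity, and as written it is backwards. The integrals in question are $T^{(\Omega)}_{m_0-2}=\int \rme^{\rmi\Omega t}\sin^{-(m_0-2)}(t)\,\rmd t$ with $\Omega=\pm\omega_l\pm\omega_{j_1}\pm\cdots\pm\omega_{j_{m_0-1}}$. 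The obstruction does \emph{not} come from the resonant frequencies $\Omega\in\{0,\pm1\}$; it comes from the non-resonant ones. When $\Omega\notin\Z$ the monomial $\rme^{\rmi\Omega t}$ is a nontrivial character of the torus $\gal(L_1/K)$, and Lemma~\ref{lem:mul} forces $(T^{(\Omega)}_{m_0-2}+c)/\rme^{\rmi\Omega t}\in K$, hence $T^{(\Omega)}_{m_0-2}$ meromorphic on $\C$, which fails by the residue computations of the Appendix; that is what kills the corresponding coefficient of $V^{(m_0)}(\vd)$. A bare logarithmic monodromy by itself only produces a $G_{\mathrm{a}}$ factor, which is Abelian --- the non-Abelianity is the failure of the semidirect product of the torus with the unipotent part to be diagonalisable, not the mere presence of monodromy. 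With your criterion taken literally, $\Z$-linear independence would imply that almost no index pattern is resonant, so almost no component of $V^{(m_0)}(\vd)$ would be constrained, and the argument would conclude nothing.

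Second, and relatedly, you do not treat the one index pattern for which the frequency \emph{is} an integer, namely $l=j_1=\cdots=j_{m_0-1}=n$ (all indices along the Darboux direction, where $\omega_n=1$). There $\rme^{\rmi\Omega t}\in K$, Lemma~\ref{lem:mul} gives nothing, and no Galois obstruction is available. The paper disposes of this coefficient by a separate, non-Galois argument: $\partial_n^{m_0-1}V$ is homogeneous of degree $3-m_0$, so Euler's identity evaluated at $\vd=(0,\ldots,0,1)$ gives $\partial_n^{m_0}V(\vd)=(3-m_0)\,\partial_n^{m_0-1}V(\vd)$, and the right-hand side vanishes either because $m_0=3$ (the factor is zero) or because $\partial_n^{m_0-1}V(\vd)$ is a Taylor coefficient of order $m_0-1\geq 3$, hence zero by minimality of $m_0$. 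Your ``decoupling into componentwise equations'' silently assumes that every component is reached by the Galois obstruction, which is false for precisely this one. Repairing both points essentially reproduces the paper's proof.
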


In other words, this result shows that, under the non resonant
conditions given by point 2, a homogeneous potential of degree two is
integrable if and only if it is an harmonic oscillator.

In contrast, for applications it is important to have a result which
gives necessary conditions for the integrability without non-resonance
assumptions.  We formulate such results in Section~\ref{ssec:rescase}

As we already mentioned, for case $k=-2$, the second order variational
equations do not give any obstacles to the integrability.  Truly
amazing is the fact that the variational equations of any arbitrary
order do not give any obstruction to the integrability. More
precisely, we show the following.
\begin{theorem}
  \label{thm:k=-2}
  Let $V$ be a homogeneous potential of degree $k=-2$ which has a
  proper Darboux point $\vd$. Then, for each $p\in\N$, the
  differential Galois group of the higher variational equations \VE{p}
  along the phase curve associated with $\vd$ is Abelian.
\end{theorem}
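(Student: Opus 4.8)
The plan is to deduce the result from the general structure theorem, Theorem~\ref{thm:dudu}, by making its hypotheses explicit in the degree $-2$ case. First I would pass to the eigenbasis of the Hessian $\gamma^{-1}V''(\vd)$ and rewrite the whole tower $\VE{p}$ in these coordinates. By Euler's identity $V''(\vd)\vd=(k-1)\gamma\vd$, so for $k=-2$ the direction $\vd$ is an eigendirection with eigenvalue $k-1=-3$, while the remaining eigenvalues $\lambda_1,\dots,\lambda_{n-1}$ govern the genuinely normal part. In these coordinates $\VE{1}$ splits into a tangential block, solved explicitly by the symmetries of the base equation~\eqref{eq:one} (time translation gives the solution $\dot\varphi$, while the scaling $\vq\mapsto s\vq$, $t\mapsto s^2 t$, which preserves~\eqref{eq:heqs} for $k=-2$, differentiates to a second independent solution), together with $n-1$ scalar normal equations $\ddot\xi_i=-\gamma\lambda_i\varphi^{-4}\xi_i$ along $\vGamma_{-2,e}$.

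Second, I would record that $\VE{1}$ is virtually Abelian for $k=-2$: this is the content of \citep{Maciejewski:09::}, and I would make the associated Picard--Vessiot extension $L_1$ explicit. After the change of independent variable that rationalises the base curve $\tfrac12\psi^2-\tfrac{\gamma}{2}\varphi^{-2}=e$, each normal equation becomes a Gauss hypergeometric equation whose parameters, for $k=-2$, force the exponent differences into the virtually Abelian range for \emph{every} $\lambda_i$. The outcome is that $L_1$ is generated over the base field $K$ by algebraic functions together with at most one further transcendental per block, on which the identity component of the Galois group acts through an Abelian (indeed one-parameter) group.

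Third --- and this is the heart of the matter --- I would invoke Theorem~\ref{thm:dudu} to reduce the Abelianity of $\VE{p}$, for every $p$, to the behaviour of the second level integrals. Concretely, solving $\VE{p}$ from $\VE{p-1}$ by variation of parameters generates forcing terms that are polynomials in the lower order solutions and in the tensors $V^{(j)}(\vd)$; the second level integrals are the resulting quadratures $\int f\,dt$ of these combinations against the $\VE{1}$ solutions. By Theorem~\ref{thm:dudu}, $\VE{p}$ is virtually Abelian precisely when each such integrand is exact in $L_1$, and since $L_1/K$ carries no nontrivial component group here, establishing this exactness will upgrade the conclusion to full Abelianity of the Galois group. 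The task is therefore to show that for $k=-2$ every one of these integrands is a total derivative.

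Finally, I would carry out this verification by induction on $p$, exploiting the degree $-2$ homogeneity. The key computation is that, because $V$ is homogeneous of degree $-2$, the Euler relations among the tensors $V^{(j)}(\vd)$ and the explicit form of the $\VE{1}$ solutions conspire so that each forcing term, after multiplication by the appropriate Wronskian factor, is the $t$-derivative of an element of $L_1$; hence the second level integrals never leave $L_1$ and contribute no new, non-commuting monodromy, and propagating this through the recursion keeps each $\VE{p}$ Abelian. \emph{The main obstacle} I anticipate is exactly this last step: controlling \emph{all} of the infinitely many second level integrals uniformly in $p$, rather than order by order. This demands a closed recursive description of the $k=-2$ solutions and a structural reason --- traceable to the scaling symmetry above and to Euler's identity --- for the systematic exactness of the integrands; producing that uniform exactness, and not merely checking low orders, is where the real work lies.
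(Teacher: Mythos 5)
Your overall shape (induction on $p$, with homogeneity of degree $-2$ as the engine) points in the right direction, but the proposal has a genuine gap at exactly the place you flag, and the specific mechanism you propose for closing it is not the right one. First, your central claim --- that every second-level integrand is \emph{exact} in $L_1$, so that ``the second level integrals never leave $L_1$'' --- is false in general. The forcing terms of $\VE{p}$ are products of lower-order solutions, and resonant combinations such as $E_{\omega}E_{-\omega}=1$ produce, after variation of constants, integrands proportional to $\varphi^{-2}$, whose primitive is the logarithm $I$ (with $\dot I = 2/\varphi^2$); this is a genuinely new transcendental over $\PV(\VE{1})$ unless some eigenvalue equals $1$. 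What makes the theorem true is not exactness but the weaker fact that every new element produced by the quadratures is either already a polynomial in the $E_{\pm\omega_i}$ and $I$, or is the first-level integral $I$ itself; i.e.\ everything stays inside one fixed Abelian extension. Routing this through Theorem~\ref{thm:dudu} is possible in principle (the decomposition $\Phi=R_1+J$ with $J'\in K$ does hold), but the paper does not use Theorem~\ref{thm:dudu} here at all, and your appeal to ``Euler relations among the tensors $V^{(j)}(\vd)$'' is not the actual structural reason --- no such relations enter the degree $-2$ argument (they are used in the $k=2$ proof instead).

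The uniform-in-$p$ step that you correctly identify as ``where the real work lies'' is precisely what the paper supplies, and it is the whole proof: one introduces the $\C$-span $\mathcal{B}=\mathrm{Vect}_{\C}\{I^mE_{\omega}\,:\,m\in\N,\ \omega\in\C\}$, which is closed under multiplication, proves that $\int b\,\varphi^{-2}\,\rmd t\in\mathcal{B}$ for every $b\in\mathcal{B}$ (Lemma~\ref{lem:int}, an elementary integration by parts plus induction on $m$), and deduces (Lemma~\ref{lem:k=-2}) that $\ddot x=-\lambda\varphi^{-4}x+b\varphi^{-3}$ has all solutions in $\varphi\mathcal{B}$. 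The homogeneity input is then the computation $\varphi^{-3-s}\vF^{(s)}(\vd)(\varphi\vB_{i_1},\ldots,\varphi\vB_{i_s})=\varphi^{-3}\vF^{(s)}(\vd)(\vB_{i_1},\ldots,\vB_{i_s})$, which shows the forcing of $\VE{p}$ is $\varphi^{-3}$ times an element of $\mathcal{B}^n$, so the invariant statement $\mathrm{Sol}(\VE{p})\subset\varphi\mathcal{B}^n$ propagates through the induction and $\PV(\VE{p})$ sits inside the single Abelian extension generated by $I$ and the $E_{\omega}$. Without this (or an equivalent closed, multiplication- and integration-stable description of the solutions), your argument does not go through. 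Two smaller points: the paper does not assume $V''(\vd)$ diagonalisable and treats Jordan blocks separately, whereas you pass to an eigenbasis; and your assertion that $L_1/K$ has trivial component group is unjustified (rational $\omega_i$ give algebraic $E_{\omega_i}$ and hence finite cyclic quotients), though this does not affect the final Abelianity.
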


In comparison with the previous result, this theorem for $k=-2$ is
quite anecdotic.  Nevertheless, it shows the local nature of the
Galois obstruction along a particular solution.  Indeed, the fact that
we do not get obstruction along a Darboux point, in the presnt case is
certainly intimately related with the integrability of homogeneous
potentials of degree $k=-2$ with two degrees of freedom.

The paper is organised as follows.  In Section~\ref{sec:gen} we
analyse the general structure of the \VE{p}, for $p\geq 2$.  This is
presented in the conpresent a potential of arbitrary degree
$k\in\Z$. The general \VE{p} are complicated linear system with second
member. The main goal of this Section is to show how some significant
sub-systems of \VE{p} can be extracted in order to apply the
Morales-Ramis-Sim\'o Theorem. This will be done explicitly for \VE{2},
since the study of these equations for general degrees $k$ will be the
main goal of the second of this paper.

From Section~\ref{sec:gen} it will be clear that the solutions of
\VE{p+1} are obtained by adding to the solutions of \VE{p} a certain
number of primitive integrals.  Formally it means the following. Let
$F_i/K$ be the Picard-Vessiot extension of \VE{i}. Then we have the
following tower of inclusions

\begin{equation*}
  K\subset F_1\subset F_2\subset \cdots \subset F_p\subset F_{p+1},
\end{equation*}
and $F_{p+1}/F_p$ is generated by certain number of elements $\Phi$,
such that $\Phi'\in F_p$. In Section~\ref{sec:theo} we show that if
the differential Galois group $F_{p+1}/K$ is virtually Abelian, then
we have a strong restrictions on the form of integrals $\Phi$, see
Theorem~\ref{thm:dudu}.

Section~\ref{sec:k=2} contains a proof of
Theorem~\ref{thm:oscillator}, and Section 5 contains a proof proof
Theorem~\ref{thm:k=-2}.

\section{General structure of  higher order variational equation}
\label{sec:gen}
\subsection{Solvability of higher order variational equations and
  second level integrals}
\label{ssec:hve}
Let us consider a system of differential equations
\begin{equation}
  \label{eq:ds}
  \Dt \vx=\vv(\vx), \qquad \vx\in U\subset \C^{m}, \qquad t\in\C,
\end{equation}
where $U$ is an open set, and the right hand sides
$\vv(\vx)=(v_1(\vx),\ldots, v_m(\vx))$ are holomorphic.  Let
$\vx_0(t)$ be a particular solution of this system. In a neighborhood
of $\vx_0(t)$ we represent $\vx$ in the following form
\begin{equation*}
  \vx= \vx_0(t) + \varepsilon \vx_1 +\frac{1}{2!}\varepsilon^2\vx_{2}+
  \frac{1}{3!}\varepsilon^3\vx_{3}+\cdots,
\end{equation*}
where $\varepsilon$ is a formal small parameter.  Inserting the above
expansion into both sides of equation~\eqref{eq:ds}, and equaling
terms of the same order with respect to $\varepsilon$ we obtain a chain 
of equations of the form
\begin{equation}
  \label{eq:hveds}
  \Dt \vx_p = \vA\vx_p +\vf_p(\vx_1,\ldots, \vx_{p-1}), \qquad p=1,2,
  \ldots, 
\end{equation}
where
\begin{equation}
  \label{eq:A}
  \vA=\pder{\vv}{\vx}\left( \vx_0(t) \right),
\end{equation}
and $\vf_1=\vzero$. In this settings, the $p$-th equation in the chain
is a linear non-homogeneous equation. Its non-homogeneous term depends on
general solution of first $p-1$ equations in the chain.  We called it
the variational equation of order $p$, and denote it by \VE{p}.

In this paper we work with Hamiltonian systems, so in particular
$m=2n$ and, system~\eqref{eq:ds} is the canonical Hamilton equations. Moreover, the matrix $\vA$ in~\eqref{eq:A} is an element of
$\mathfrak{sp}(2n, K)$, where $K$ is the differential ground field.

We assume that all the differential fields which appear in this paper are
of characteristic zero and their fields of constants is  $C=\C$.

Let $K$ be a differential field.  We denote $a'$, the derivative of
$a\in K$. Recall that the Picard-Vessiot ring of a Picard-Vessiot
extension $F/K$ is the set of elements of $F$ which are ``holonomic'' over
$K$. They form the set of elements of $F$ which are solutions of some non-trivial
linear differential equation with coefficients in $K$. We denote the
\emph{Picard-Vessiot ring} of a Picard-Vessiot extension $F/K$ by $T(F/K)$.
   
In our considerations an important role is played by the notion of ``levels of
integrals''.
\begin{definition}
  Let $K \subset F_1 \subset F_2$ be a ``tower of Picard-Vessiot
  extensions of $K$''.  By this we mean that $F_1/K$ and $F_2 /K$ are Picard-Vessiot
extensions with $F_1 \subset F_2$. An element $\Phi \in F_2$ is called an integral
  of second level with respect to $K$ iff $\Phi' \in T (F_1 / K)$. If
  moreover $\Phi' \in K$, i.e, if $\Phi$ is a primitive integral over
  $K$, we say that $\Phi$ is an integral of the first level.
\end{definition}
Obviously, a first level integral is an integral of second
level. Moreover, let us observe that any second level integral is
holonomic over $K$, hence it belongs to $T (F_2 / K)$.

Let us consider the following system of differential equations
\begin{subequations}
  \label{eq:ve1x}
  \begin{align}
    \label{eq:ve1x1}
    \vx_{1}' =&\vA \vx_1,\\
    \label{eq:ve1x2}
    \vx_2' =&\vA \vx_2+ \vB,
  \end{align}
\end{subequations}
where $\vA\in\mathfrak{sp}(2n,K)\subset\mathfrak{sl}(2n,K)$. Let us assume 
that the elements of the one column matrix $\vB$ belongs to the Picard-Vessiot ring $T (F_1 / K)$ of
equation~\eqref{eq:ve1x1}. According to equations ~\eqref{eq:hveds} and ~\eqref{eq:A}, the second order
variational equations of a general Hamiltonian system have such a form.  Let
$F_2\supset K$ be the Picard-Vessiot extension of the whole
system~\eqref{eq:ve1x}. We have that $K \subset F_1 \subset F_2$, and
the extension $F_2\supset F_1$ is generated by a certain number of
second level integrals.  To see this, let us take a fundamental matrix
$\vX_1\in\mathrm{Sp}(2n,F_1)$ of equation~\eqref{eq:ve1x1}. In order
to solve equation~\eqref{eq:ve1x2} we apply the classical variation of
constants method. That is, we look for a particular solution $\vX_2$ of the form
$\vX_2=\vX_1\vC$, where the column vector $\vC$ satisfies
\begin{equation}
  \label{eq:c}
  \vC' = \vX_1^{-1}\vB.
\end{equation}
Notice that the right hand sides of the above equation belong to $T
(F_1 / K)^{2n}$ since the fundamental matrix $X_1$ is unimodular.
Hence, we have
\begin{equation}
  \label{eq:X2}
  \vX_2 = \vX_1 \int \vX_1^{-1}\vB \Longleftrightarrow \vC= \int \vX_1^{-1}\vB.
\end{equation}
Thus, as claimed, the field $F_2$ is generated over $F_1$ by a certain
number of elements $\Phi$ such that $\Phi'\in T (F_1 /K)$. Precisely, $F_2/F_1$ 
is generated by the $2n$ entries of $\vC$.

Taking into account the above facts, and thanks to
equation~\eqref{eq:hveds}, we have proved the following.
\begin{lemma}
\label{lem:vepsol}
  Let us assume that $\VE{1}$ has  virtually Abelian differential
  Galois group $\gal(\VE{1})$, and let us consider
  the following tower of \PV{} extensions
  \begin{equation*}
    K\subset  \PV(\VE{1}) \subset  \PV(\VE{2}) 
    \subset \cdots \subset \PV(\VE{p}) \subset  \PV(\VE{p+1})
    \subset\cdots. 
  \end{equation*}
  Then the following statements hold true  for arbitrary $p\in\N$.
  \begin{enumerate}
  \item In each tower $K \subset \PV( \VE{p}) \subset \PV(\VE{p +
      1})$, the extension $ \PV( \VE{p}) \subset \PV(\VE{p + 1})$ is
    generated by the second level integrals.
    
  \item Each $\PV( \VE{p})$ is a solvable Picard-Vessiot extension.
  \end{enumerate}
\end{lemma}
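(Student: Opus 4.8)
The plan is to argue by induction on $p$, taking the computation carried out above for the pair \eqref{eq:ve1x1}--\eqref{eq:ve1x2} as the base case, and then showing that the passage from $F_p:=\PV(\VE{p})$ to $F_{p+1}:=\PV(\VE{p+1})$ is formally identical to the passage from $F_1$ to $F_2$, with $F_1$ replaced by $F_p$. Two closure properties of the Picard--Vessiot ring do all the work, and I would record them first. By the definition adopted here, $T(F_p/K)$ is the set of holonomic elements of $F_p$; this is a $K$-subalgebra stable under $\Dt$, because a product of solutions of linear equations over $K$ again satisfies such an equation (the operator obtained from the tensor product). Secondly, if $\Phi'\in T(F_j/K)$ is annihilated by a linear operator $L$ over $K$, then $L\circ\Dt$ annihilates $\Phi$, so every second level integral is again holonomic, i.e. $\Phi\in T(F_{j+1}/K)$; this is exactly the remark made after the definition.

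For the inductive step I would carry along the auxiliary claim that for every $j\le p$ the entries of $\vx_1,\dots,\vx_j$ lie in $T(F_j/K)$ (true for $j=1$, since $\vx_1$ solves the homogeneous equation \eqref{eq:ve1x1}). The inhomogeneous term of $\VE{p+1}$ is $\vf_{p+1}(\vx_1,\dots,\vx_p)$, a polynomial in the entries of $\vx_1,\dots,\vx_p$ whose coefficients are derivatives of $\vv$ evaluated along $\vx_0(t)$, hence lie in $K$ for the same reason that $\vA\in\mathfrak{sp}(2n,K)$. By the induction hypothesis and the ring closure of $T(F_p/K)$ its entries lie in $T(F_p/K)$, so $\VE{p+1}$ has precisely the shape \eqref{eq:ve1x2} with $F_1$, $\vB$ replaced by $F_p$, $\vf_{p+1}$. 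Using the same fundamental matrix $\vX_1$ of \eqref{eq:ve1x1} --- which is unimodular since $\vA\in\mathfrak{sp}(2n,K)\subset\mathfrak{sl}(2n,K)$ gives $(\det\vX_1)'=\tr(\vA)\det\vX_1=0$, so $\vX_1^{-1}$ has entries in $T(F_1/K)\subset T(F_p/K)$ --- variation of constants yields $\vx_{p+1}=\vX_1\vC_{p+1}$ with $\vC_{p+1}'=\vX_1^{-1}\vf_{p+1}\in T(F_p/K)^{2n}$. Thus $F_{p+1}$ is generated over $F_p$ by the entries $\Phi$ of $\vC_{p+1}$, each satisfying $\Phi'\in T(F_p/K)$, which are second level integrals; this is assertion~(1). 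The second closure property then gives $\vx_{p+1}\in T(F_{p+1}/K)^{2n}$, closing the induction.

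Assertion~(2) I would read off from the tower. Each generator $\Phi$ above is a primitive over $F_p$ (its derivative lies in $T(F_p/K)\subset F_p$), so $F_{p+1}/F_p$ is generated by primitives and $\gal(F_{p+1}/F_p)$ embeds into a product of additive groups $\mathbb{G}_a$, hence is Abelian (indeed unipotent), in particular solvable. Iterating down the tower shows $\gal(F_p/F_1)$ is solvable. Since $\gal(\VE{1})=\gal(F_1/K)$ is virtually Abelian by hypothesis, its identity component is Abelian, hence solvable; feeding this into the exact sequence $1\to\gal(F_p/F_1)\to\gal(F_p/K)\to\gal(F_1/K)\to1$ exhibits $\gal(F_p/K)^{\circ}$ as an extension of a solvable group by a solvable group, hence solvable. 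Therefore $\PV(\VE{p})$ is a solvable Picard--Vessiot extension.

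The only genuinely delicate point, and the one I would take care to state cleanly, is the ring closure of the Picard--Vessiot ring used in the inductive step: namely that the polynomial combination $\vf_{p+1}$ of the holonomic entries of $\vx_1,\dots,\vx_p$ is again holonomic over $K$, so that $\vf_{p+1}\in T(F_p/K)^{2n}$. Everything else --- the variation-of-constants computation, the unimodularity of $\vX_1$, and the $\mathbb{G}_a$-structure of extensions by primitives --- is either already established for $p=2$ or a routine transcription of it. I therefore expect the whole argument to rest on making the closure of $T(F_p/K)$ under products and primitives explicit.
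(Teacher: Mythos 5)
Your proof is correct and follows essentially the same route as the paper: the paper obtains the lemma directly from the variation-of-constants computation for the pair \eqref{eq:ve1x1}--\eqref{eq:ve1x2}, tacitly applied at each level of the tower exactly as in your inductive step. You merely make explicit two points the paper leaves implicit --- the closure of $T(F_p/K)$ under products (so that $\vf_{p+1}\in T(F_p/K)^{2n}$) and the deduction of solvability of $\gal(F_p/K)^{\circ}$ from the tower of primitives --- which is a sharpening rather than a divergence.
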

The above lemma shows the particular structure of the $\VE{p}$.
 Although they are big complicated systems
 but, nevertheless, they are solvable. As a consequence, our main goal is going to find tractable conditions
 that distinguish the virtually Abelian ones between all these solvable systems.  
\begin{remark}
  \label{rem:nhsplit}
  In our further considerations we will use the following superposition principle.
  Let us consider a linear non-homogeneous system
  \begin{equation}
    \label{eq:lnh}
    \dot \vx = \vA\vx +\vB_1+\cdots +\vB_s,  \qquad \vx\in K^m,
  \end{equation}
  where $K$ is a differential field, and $\vA\in \M(m,K)$, $\vB_i\in
  K^m$ for $i=1,\ldots,s$. Let $\vx_1, \ldots, \vx_s\in K^m$, satisfy
  \begin{equation}
    \label{eq:lnhi}
    \dot \vx_i = \vA\vx_i +\vB_i  \mtext{for}i=1,\ldots, i.
  \end{equation}
  Then
  \begin{equation*}
    \widehat\vx= \vx_1+\cdots+\vx_s,
  \end{equation*}
  is a particular solution of~\eqref{eq:lnh}.

  Let $\widehat L/K$ and $L_i/K$ denote the Picard-Vessiot
  extensions of~\eqref{eq:lnh}, and~\eqref{eq:lnhi}, respectively.  By
  $\widehat G$ and $G_i$ we denote the corresponding differential
  Galois groups. By the above observation we have the following
  inclusion
  \begin{equation}
    \label{eq:in}
    \widehat L\subset L_1\cdots L_s,
  \end{equation}
  where the product denotes the composition of fields. Hence,
  $\widehat G$ is an algebraic subgroup of $G_1\times \cdots\times
  G_s$. Thus if $G_1,\ldots,G_s$ are virtually Abelian, then $\widehat
  G$ is virtually Abelian. Moreover, if in~\eqref{eq:in} we have the
  equality, then we have also the inverse implication.
\end{remark}
\subsection{Higher order variational equations along a Darboux point}

In this section we show the general structure of the second order
variational equations for the  Hamiltonian system~\eqref{eq:heqs} and
for a particular solution
$(\vq(t),\vp(t)):=(\varphi\vd,\dot\varphi\vd)$ associated to a proper
Darboux point $\vd$.   

We can  rewrite  equations~\eqref{eq:heqs} into Newton form
\begin{equation}
\label{eq:n}
\ddot \vq = \vF(\vq) 
\end{equation}
where $\vF(\vq)=-V'(\vq)$.    We put 
\begin{equation*}
  \vq= \vq_0 + \varepsilon\vq_1  +  \frac{1}{2!}\varepsilon^2\vq_2 +
 \frac{1}{3!}\varepsilon^3\vq_3 +\cdots
\end{equation*}
where $\vq_0=\varphi(t)\vd$ is 
the chosen  particular solution, end $\varepsilon$ is a formal small parameter.
Inserting the above expansion into equation~\eqref{eq:n} and comparing
terms of the same order with respect to $\varepsilon$ we obtain an
infinite sequence of   equation. The first of them
$\ddot \vq_0= \vF(\vq_0)$,  is identically  satisfied by
assumptions. For further purposes we  need the next three equations
which are the following
\begin{align}
\label{eq:ve1}
\ddot \vq_1= &\vF'(\vq_0)\vq_1, \\
\label{eq:ve2}
\ddot \vq_2= &\vF'(\vq_0)\vq_2+
 \vF''(\vq_0)(\vq_1, \vq_1),\\
\label{eq:ve3}
\ddot \vq_3= &\vF'(\vq_0)\vq_3+3
\vF''(\vq_0)(\vq_1,\vq_2)+\vF^{(3)}(\vq_0)(\vq_1,\vq_1,\vq_1),
\end{align}
From this we see that:
\VE{1} is a linear homogeneous equation given by~\eqref{eq:ve1}.  In
contrast, the \VE{p} for $p\geq2$ are  non-homogeneous linear
systems.But their linear part is the same as the one of \VE{1}. Moreover,
the second term in \VE{2} is a quadratic form in the solutions of \VE{1}.  
A bigger complexity appear in  \VE{3} given by~\eqref{eq:ve3}. The
second term in the right hand side of this equation is a bilinear form
in solution of \VE{1} and \VE{2}, while the third term is a cubic form
in the solutions of \VE{1}.

Notice that in the considered case $\vF$  is homogeneous of degree
$(k-1)$. This is why we have 
\begin{equation*}
\label{eq:hf}
\vF(\vq_0)=\vF(\varphi(t)\vd)=\varphi(t)^{k-1}\vF(\vd)=-\varphi(t)^{k-1}\vd,
\end{equation*}
and 
\begin{equation*}
\label{eq:hi}
\vF^{(i)}(\vq_0)=\varphi(t)^{k-1-i}\vF^{(i)}(\vd)
\end{equation*}

It seems that a  global investigation of the whole
system~\eqref{eq:ve1}-\eqref{eq:ve3} is too difficult. However, we are going to simplify 
its study by considering  subsystems of them. This will be done at the level of  $\VE{2}$ and $\VE{3}$. 
\begin{remark}
In the above calculations we implicitly assumed that the Darboux point
$\vd$ satisfies $V'(\vd)=\vd$. If we have a Darboux point $\vc$
satisfying $V'(\vc)=\gamma\vc$, then $\vd=\alpha \vc$ satisfies
$V'(\vd)=\alpha^{k-2}\gamma \vd$. Hence, if $k\neq 2$ we can choose
$\alpha$ in such a way that $\alpha^{k-2}\gamma=1$, and we do not loose
the generality. For $k=2$ we have to rescale the potential. If $V$  has a
Darboux point $\vc$ satisfying   $V'(\vc)=\gamma\vc$, then potential
$\widetilde V:=\gamma^{-1}V$ has the same integrability properties as
$V$, and $\widetilde V'(\vc)=\vc$.
\end{remark}
\subsection{Reduction procedure for $VE_{2}$ and $VE_{3}$}
\label{ssec:redu}
In order to simplify notations we fix the following conventions. To a
differential equation over a ground field $K$ we attach a certain
name, e.g., $\VE{2}$.  The ground field over which we consider this
equation is always clearly known from the context. Then, the
corresponding Picard-Vessiot extension and its differential Galois group
will be  denoted by  $\PV(\VE{2})$,  and by $\gal(\VE{2})$, respectively.

The main goal of this section is to prove that the differential Galois
group $\gal(\VE{2})$ is virtually Abelian if and only
if the differential Galois groups of a certain number of systems
extracted from $\VE{2}$ are virtually Abelian. Applications of these reductions are given in
Section ~\ref{ssec:rescase} below, they will be of crucial importance in the second part of the paper. 
But since the proofs of Theorems ~\ref{thm:oscillator} and~\ref{thm:k=-2} are independent of these considerations.
As a consequence, these section is not of crucial importance for their understanding.

Let us assume that the Hessian matrix $V''(\vd)$ is diagonalisable. Then,
without loss of  generality we can assume that it is diagonal, and
we put 
$V''(\vd)=\diag(\lambda_1,\ldots, \lambda_n )$.  Let us  denote also   
\begin{equation}
\label{eq:qk}
\vq_j=(q_{1,j},\ldots, q_{n,j}), \mtext{for} j\in \N.
\end{equation}
Then, the system of equations~\eqref{eq:ve1},  \eqref{eq:ve2}  and
~\eqref{eq:ve3} reads 
\begin{align}
\label{eq:dp1}
\ddot q_{i,1}=&-\lambda_i\varphi(t)^{k-2} q_{i,1}, \\
\label{eq:dp2}
\ddot q_{i,2}=& -\lambda_i\varphi(t)^{k-2} q_{i,2} +
\varphi(t)^{k-3}\Theta^{i}(\vq_1,\vq_1),\\
\label{eq:dp3}
\ddot q_{i,3}=&-\lambda_i\varphi(t)^{k-2} q_{i,3} +
3\varphi(t)^{k-3} \Theta^i(\vq_1,\vq_2) + \varphi(t)^{k-4}\,\Xi^i(\vq_1)
\end{align}
where $1\leq i \leq n$, and $\Theta^i$, and $\Xi^i$ are 
polynomials of their arguments 
\mnote{Notation!}
\begin{equation}
\label{eq:thi}
\Theta^i(\vq_1,\vq_2)= \sum_{\alpha, \beta=1}^{n}
\theta^i_{\alpha, \beta}\,q_{\alpha,1} q_{\beta,2},  \mtext{where }
\theta^i_{\alpha, \beta}=D_{\alpha, \beta}F_i(\vd), 
\end{equation}
and 
\begin{equation}
\label{eq:xi}
\Xi^i(\vq_1):= \sum_{\alpha,\beta,\gamma=1}^n 
\xi^i_{\alpha,\beta,\gamma}q_{\alpha,1}q_{\beta,1}q_{\gamma,1}, \mtext{where}
\xi^i_{\alpha,\beta,\gamma}=D_{\alpha,\beta,\gamma}F_{i}(\vd) . 
\end{equation}

The first order variational equations $\VE{1}$ is given by~\eqref{eq:dp1}.
I has the form of a direct product of independent equations. Thus we have a perfect splitting
of the problem at this level.  In order to perform effectively an analysis of
$\VE{2}$ and $\VE{3}$ we have to  split the problem into smaller subsystems.
We can do this in the following way. We set to zero all the variables
$q_{i, 1}$ except variable $q_{\alpha,1}$ in the 
system~\eqref{eq:dp1}--\eqref{eq:dp2}. We  get a system of $n$
independent subsystems  of $\VE{2}$ which we denote
$\VE{2,\alpha}^\gamma$, for $1\leq\gamma\leq n$ .
Such a system has the following form  
\begin{equation}
\label{eq:ve2ai}
\left.
\begin{aligned}
\ddot q_{\alpha,1}=&-\lambda_\alpha\varphi(t)^{k-2} q_{\alpha,1}, \\
\ddot q_{\gamma,2}=& -\lambda_\gamma\varphi(t)^{k-2} q_{\gamma,2} +
\varphi(t)^{k-3}\theta^\gamma_{\alpha,\alpha} q_{\alpha,1}^2,
\end{aligned} \quad \right\}
\end{equation}
In a similar way, for two fixed indices $\alpha\neq\beta$,  we  distinguish $n$ other subsystems
$\VE{2,(\alpha,\beta)}^\gamma$ of $\VE{2}$.  They are 
subsystems of~\eqref{eq:dp1}--\eqref{eq:dp2} obtained by putting
$q_{i,1}=0$ except for $i\in \{\alpha,\beta\}$
They are of the following form  
\begin{equation}
\label{eq:ve2abi}
\left.
\begin{aligned}
\ddot q_{\alpha,1}=&-\lambda_\alpha\varphi(t)^{k-2} q_{\alpha,1}, \\
\ddot q_{\beta,1}=&-\lambda_\beta\varphi(t)^{k-2} q_{\beta,1}, \\
\ddot q_{\gamma,2}=& -\lambda_\gamma\varphi(t)^{k-2} q_{\gamma,2} +
\varphi(t)^{k-3}\left[\theta^\gamma_{\alpha,\alpha} q_{\alpha,1}^2 + 
2 \theta^\gamma_{\alpha,\beta} q_{\alpha,1}q_{\beta,1}+
\theta^\gamma_{\beta,\beta} q_{\beta,1}^2 \right]
\end{aligned} \quad \right\}
\end{equation}

We fix three indices $\alpha, \beta, \gamma \in \{1,\ldots, n\}$ such
that $\alpha\neq \beta$.  From $\VE{2,(\alpha,\beta)}^\gamma$ we
extract a system $\EX{2,(\alpha,\beta)}^\gamma$ of the following form
\begin{equation}
  \label{eq:ve2abii}
  \left.
    \begin{aligned}
      \ddot q_{\alpha,1}=&-\lambda_\alpha\varphi(t)^{k-2} q_{\alpha,1}, \\
      \ddot q_{\beta,1}=&-\lambda_\beta\varphi(t)^{k-2} q_{\beta,1}, \\
      \ddot q_{\gamma,2}=& -\lambda_\gamma\varphi(t)^{k-2}
      q_{\gamma,2} + 2\varphi(t)^{k-3} \theta^\gamma_{\alpha,\beta}
      q_{\alpha,1}q_{\beta,1}.
    \end{aligned} \quad \right\}
\end{equation}                  %
Note that this system \bfi{is not} a subsystem of
$\VE{2,(\alpha,\beta)}^{\gamma}$.  According to
Remark~\ref{rem:nhsplit}, we have
\begin{equation}
  \label{eq:pvsabg}
  \PV(\VE{2,(\alpha,\beta)}^{\gamma})=\PV(\VE{2,\alpha}^{\gamma})\PV(\VE{2,\beta}^{\gamma})\PV(\EX{2,(\alpha,\beta)}^{\gamma}).
\end{equation}
The inclusion $\subset$ is evident. And the reverse inclusion follows
from
\begin{equation*}
  \PV(\EX{2,(\alpha,\beta)}^{\gamma})\subset
  \PV(\VE{2,\alpha}^{\gamma})\PV(\VE{2,\beta}^{\gamma})\PV(\VE{2,(\alpha,\beta)}^{\gamma})
\end{equation*}
which corresponds to the subtractions of particular solutions.  Hence
again, by Remark~\ref{rem:nhsplit}, we have that
$\gal(\VE{2,(\alpha,\beta)}^{\gamma})$ is virtually Abelian iff the groups
$\gal(\VE{2,\alpha}^{\gamma})$, $\gal(\VE{2,\beta}^{\gamma})$, and
$\gal(\EX{2,(\alpha,\beta)}^{\gamma})$ are virtually Abelian.

By the above facts and again by  Remark~\ref{rem:nhsplit}, we have proved
the following.
\begin{proposition}
  \label{pro:split}
  The differential Galois group $\gal(\VE{2})$ is virtually
  Abelian iff the groups $\gal(\VE{2,\alpha}^{\gamma})$ and
  $\gal(\EX{2,(\alpha,\beta)}^{\gamma})$ with $\alpha, \beta, \gamma
  \in \{1,\ldots, n\}$ and $\alpha\neq \beta$, are virtually Abelian.
\end{proposition}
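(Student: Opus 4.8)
The plan is to realise the full system $\VE{2}$ — that is, equations~\eqref{eq:dp1}--\eqref{eq:dp2} taken over all indices simultaneously — inside the compositum of the Picard-Vessiot extensions of the listed subsystems, and then to invoke Remark~\ref{rem:nhsplit} together with the standard stability of the virtually Abelian property: a closed subgroup, a quotient, and a finite product of virtually Abelian groups are all virtually Abelian. The starting point is the splitting of the forcing term in the equation for $q_{\gamma,2}$. Using the symmetry $\theta^\gamma_{\alpha,\beta}=\theta^\gamma_{\beta,\alpha}$ coming from~\eqref{eq:thi}, I would write
\begin{equation*}
  \Theta^\gamma(\vq_1,\vq_1)=\sum_{\alpha=1}^n\theta^\gamma_{\alpha,\alpha}\,q_{\alpha,1}^2
  +\sum_{1\le\alpha<\beta\le n}2\,\theta^\gamma_{\alpha,\beta}\,q_{\alpha,1}q_{\beta,1},
\end{equation*}
so that the inhomogeneity of $\VE{2}$ decomposes into the diagonal forcings appearing in the systems $\VE{2,\alpha}^\gamma$ of~\eqref{eq:ve2ai} and the pure cross forcings appearing in the extracted systems $\EX{2,(\alpha,\beta)}^\gamma$ of~\eqref{eq:ve2abii}.

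For the implication ``all subsystems virtually Abelian $\Rightarrow$ $\gal(\VE{2})$ virtually Abelian'', I would argue as follows. The homogeneous part of $\VE{2}$ is $\VE{1}$, and each first order solution $q_{\alpha,1}$ already lies in $\PV(\VE{2,\alpha}^\gamma)$ for any fixed $\gamma$, since that system contains the full equation~\eqref{eq:dp1} for the index $\alpha$. By the superposition principle of Remark~\ref{rem:nhsplit} applied to the decomposition above, a particular solution $q_{\gamma,2}$ of~\eqref{eq:dp2} can be taken as a sum of particular solutions of the single-term forced equations, each of which lies either in $\PV(\VE{2,\alpha}^\gamma)$ (for a diagonal term) or in $\PV(\EX{2,(\alpha,\beta)}^\gamma)$ (for a cross term). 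Hence $\PV(\VE{2})$ is contained in the compositum, taken over all admissible indices, of the extensions $\PV(\VE{2,\alpha}^\gamma)$ and $\PV(\EX{2,(\alpha,\beta)}^\gamma)$. Remark~\ref{rem:nhsplit} then realises $\gal(\VE{2})$ as a closed subgroup of the product of the corresponding Galois groups; if each factor is virtually Abelian, so is the product, and therefore so is $\gal(\VE{2})$.

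For the converse, each $\VE{2,\alpha}^\gamma$ is literally a subsystem of $\VE{2}$ obtained by setting the remaining first order variables to zero, so $\PV(\VE{2,\alpha}^\gamma)\subset\PV(\VE{2})$ and $\gal(\VE{2,\alpha}^\gamma)$ is a quotient of $\gal(\VE{2})$, hence virtually Abelian. The extracted systems $\EX{2,(\alpha,\beta)}^\gamma$ are \emph{not} subsystems, but the inclusion $\PV(\EX{2,(\alpha,\beta)}^\gamma)\subset\PV(\VE{2,(\alpha,\beta)}^\gamma)$ established through~\eqref{eq:pvsabg} by subtraction of particular solutions, combined with the fact that $\VE{2,(\alpha,\beta)}^\gamma$ \emph{is} a subsystem of $\VE{2}$, again exhibits $\gal(\EX{2,(\alpha,\beta)}^\gamma)$ as a quotient of $\gal(\VE{2})$, hence virtually Abelian.

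The step I expect to require the most care is the sufficiency direction: one must check that the compositum inclusion genuinely captures \emph{all} generators of $\PV(\VE{2})$ at once, namely a complete fundamental system of $\VE{1}$ together with a particular solution for every component $q_{\gamma,2}$, and that the passage from the inclusion of fields to the embedding of Galois groups is legitimate. This is precisely where Remark~\ref{rem:nhsplit} does the work, since it only needs the inclusion $\widehat L\subset L_1\cdots L_s$, which always holds, rather than equality of the compositum; consequently no delicate algebraic independence of the extracted extensions has to be verified for this direction, which is what makes the argument go through cleanly.
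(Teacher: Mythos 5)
Your proposal is correct and follows essentially the same route as the paper: it rests on the superposition principle of Remark~\ref{rem:nhsplit}, the fact that $\VE{2,\alpha}^{\gamma}$ and $\VE{2,(\alpha,\beta)}^{\gamma}$ are genuine subsystems of $\VE{2}$, and the subtraction-of-particular-solutions inclusion underlying~\eqref{eq:pvsabg} to handle the extracted systems $\EX{2,(\alpha,\beta)}^{\gamma}$. The only cosmetic difference is that the paper packages the two inclusions into the single equality~\eqref{eq:pvsabg} and deduces the ``iff'' at the level of $\VE{2,(\alpha,\beta)}^{\gamma}$ first, whereas you deploy each inclusion separately for the corresponding direction of the equivalence; the content is the same.
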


\section{Second level integrals and virtually Abelian Galois groups}
\label{sec:theo}
According to Lemma~\ref{lem:vepsol}, we know that if $\gal(\VE{1})$ is virtually
Abelian,  then $\gal(\VE{p})$ is solvable for an arbitrary $p\in
\N$. Therefore, our main goal in this section is to find a necessary and
sufficient condition  which guarantee that  $\gal(\VE{p})$ is
virtually Abelian for $p\in\N$. 

From what follows, all the PV extensions $F / K$ will have the same
algebraically closed field of constants $C=\mathbb{C}$.

We have to analyse the following structure
\[ K \subset F_1 \subset F, \]
where $F / K$ and $F_1 / K$ are PV extensions, $\gal (F_1 / K)$
is virtually Abelian,  and $F /
F_1$ is generated by second level integrals $\Phi_1, \ldots,
\Phi_q$. We can assume that these integrals are independent over $F_1$.  Hence,
$H := \gal (F / F_1)$ is a vector group isomorphic to
$C^q$.   Therefore, we get the following exact sequence of algebraic groups
\begin{equation*}
  \begin{array}{llllllll}
    0 \longrightarrow & H =C^q & \longrightarrow & \gal (F / K) &
    \longrightarrow & \gal (F_1 / K) & \longrightarrow & 0 \label{eqexactseq}
  \end{array} .
\end{equation*}
As a consequence, the algebraic closure $\widetilde{K}$ of $K$ in $F_1$ coincides with the algebraic closure of $K$ in $F$.

In order to decide whether or not, $F / K$ is virtually Abelian, we shall use
the following result.
\begin{theorem}
  \label{thm:dudu}
  Let $K \subset F_1 \subset F$ be a tower of Picard-Vessiot
  extensions such that $F / F_1$ is generated by integral of second
  level over $K$. Then $G:=\gal(F/K)$ is virtually Abelian iff
  $G_1:=\gal(F_1/K)$ is virtually Abelian and any second level
  integrals $\Phi \in F$ can be expanded into the form
  \begin{equation*}
    \Phi = R_1 + J, \mtext{with} R_1 \in T (F_1 / K) \mtext{and}J' 
\in \widetilde{K},
  \end{equation*}
  where $\widetilde{K}$ is the algebraic closure of $K$ in $F_1$.
Moreover, a necessary condition for the virtual Abelianity of $G$ is that we get:
$\sigma(\Phi)-\Phi\in T(F_1/K)$ for all $\sigma\in G^{\circ}$.
\end{theorem}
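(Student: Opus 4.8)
The plan is to analyze the exact sequence
\[
0 \longrightarrow H = C^q \longrightarrow G \longrightarrow G_1 \longrightarrow 0,
\]
where $H = \gal(F/F_1) \cong C^q$ is the vector group coming from the $q$ independent second level integrals $\Phi_1,\ldots,\Phi_q$. The key structural fact I would exploit is that $G^{\circ}$ acts on the $\Phi_i$ by the formula $\sigma(\Phi_i) = \Phi_i + \chi_i(\sigma)$ for suitable additive characters, since each $\Phi_i' \in T(F_1/K)$ lies in a $G_1$-stable space and the action of $\sigma \in H$ merely translates $\Phi_i$ by a constant. First I would establish the necessary condition at the end of the statement, because it drives everything else: for $\sigma \in G^{\circ}$ one computes $\sigma(\Phi) - \Phi$ and observes its derivative is $\sigma(\Phi') - \Phi' = \sigma(\Phi') - \Phi'$, which vanishes if and only if $\Phi' \in T(F_1/K)^{G^{\circ}}$; relating the commutator structure of $G$ to virtual Abelianity (i.e.\ $G^{\circ}$ Abelian) forces $\sigma(\Phi) - \Phi \in T(F_1/K)$.

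For the main equivalence I would argue in two directions. For the ``if'' direction, suppose $G_1$ is virtually Abelian and each $\Phi = R_1 + J$ with $R_1 \in T(F_1/K)$ and $J' \in \widetilde K$. Since $J' \in \widetilde K$ and $\widetilde K$ is the algebraic closure of $K$ in $F_1$ (which by the remark preceding the theorem equals the algebraic closure of $K$ in $F$), the element $J$ is a primitive integral over an Abelian base, so adjoining $J$ contributes only an Abelian (indeed unipotent-by-finite) piece to $G^{\circ}$; meanwhile $R_1 \in F_1$ contributes nothing new beyond $G_1$. I would then show $G^{\circ}$ is the extension of the Abelian $G_1^{\circ}$ by the central vector group $H$, with the extension splitting on the identity component precisely because the translation characters factor through $\widetilde K$, giving $G^{\circ}$ Abelian. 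For the ``only if'' direction, assume $G$ virtually Abelian. Using the just-proved necessary condition, $\sigma(\Phi) - \Phi \in T(F_1/K)$ for all $\sigma \in G^{\circ}$; I would then define $J$ by averaging or by projecting $\Phi$ onto the $G^{\circ}$-invariants, setting $R_1$ to be the complementary $T(F_1/K)$-component, so that $J := \Phi - R_1$ satisfies $\sigma(J) = J$ for $\sigma \in G^{\circ}$, whence $J \in F^{G^{\circ}} \cap (\text{holonomic elements})$. Galois correspondence then places $J$ in the extension of $K$ fixed by $G^{\circ}$, which is algebraic over $K$; combined with $J' = \Phi' - R_1' \in T(F_1/K)$ and the invariance, one deduces $J' \in \widetilde K$.

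The hard part will be the ``only if'' direction, specifically producing the clean decomposition $\Phi = R_1 + J$ with $J' \in \widetilde K$ rather than merely $J' \in T(F_1/K)^{G^{\circ}}$. The subtlety is that $T(F_1/K)^{G^{\circ}}$ need not obviously coincide with $\widetilde K$; I would bridge this gap by invoking that $G_1$ virtually Abelian means $G_1^{\circ}$ is Abelian, so the invariants $T(F_1/K)^{G_1^{\circ}}$ reduce to functions on a torus/vector group whose primitive-integrable elements are exactly the algebraic ones $\widetilde K$. The delicate bookkeeping is to choose $R_1 \in T(F_1/K)$ absorbing the non-invariant part of $\Phi$ while keeping $R_1$ genuinely in the Picard--Vessiot ring (not just in $F_1$); I expect this to require a careful use of the $G^{\circ}$-module decomposition of the space spanned by $\Phi$ and its $G^{\circ}$-translates, together with the fact that $H$ acts by translations. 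Once that decomposition is in hand, the remaining verifications that $R_1' \in F_1$ and $J' \in \widetilde K$ are routine differentiations using $\Phi' \in T(F_1/K)$.
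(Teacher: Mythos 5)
Your proposal correctly locates the difficulty --- producing $J$ with $J' \in \widetilde K$ rather than merely $J' \in T(F_1/K)^{G^{\circ}}$ --- but the bridge you offer does not close it, and the paper's actual mechanism is absent. The paper first reduces to $G$ connected (so $\widetilde K = K$), then invokes the splitting of the exact sequence $0 \to (G_{\mathrm{a}})^q \to G \to G_1 \to 0$ of connected Abelian groups (Proposition~\ref{abgroups}, point 4) to write $G \simeq G_1 \times H$; the fixed field $M := F^{G_1}$ is then a Picard--Vessiot extension of $K$ with group $(G_{\mathrm{a}})^q$, hence of the form $K(J_1,\ldots,J_q)$ with $J_i' \in K$, and $F = F_1(J_1,\ldots,J_q)$ with the $J_i$ algebraically independent over $F_1$. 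Given a second level integral $\Phi$, the set $\{\Phi, J_1,\ldots,J_q\}$ consists of $q+1$ algebraically dependent primitive integrals over $F_1$, and the Ostrowski--Kolchin theorem yields constants $c_i$ with $\Phi - \sum_i c_i J_i = R_1 \in F_1$, hence $R_1 \in T(F_1/K)$ since both $\Phi$ and $\sum_i c_i J_i$ are holonomic over $K$. None of these three ingredients --- the group-theoretic splitting, the identification of $F^{G_1}$ as generated by primitive integrals over $K$ itself, and Ostrowski--Kolchin --- appears in your sketch; ``averaging or projecting onto the $G^{\circ}$-invariants'' is not a defined operation in this setting, and your claim that the primitive-integrable elements of $T(F_1/K)^{G^{\circ}}$ are exactly the algebraic ones is precisely what would need proof.

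A second, smaller problem: you propose to establish the final necessary condition $\sigma(\Phi)-\Phi \in T(F_1/K)$ first and let it drive the rest, but your argument for it (``relating the commutator structure of $G$ to virtual Abelianity forces\ldots'') is a restatement of the goal rather than a derivation; note also that $\sigma(\Phi')-\Phi'$ does not vanish in general, since $\Phi' \in T(F_1/K)$ need not be $G^{\circ}$-invariant. In the paper this condition is an easy corollary of the decomposition $\Phi = R_1 + J$: since $J' \in \widetilde K$ one has $\sigma(J) = J + h(\sigma)$ for a morphism $h\colon G^{\circ}\to C$, whence $\sigma(\Phi)-\Phi = \sigma(R_1)-R_1+h(\sigma) \in T(F_1/K)$; the logical order is thus the reverse of yours. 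Your ``if'' direction is essentially sound in outline and matches the paper's (both reduce to the observation that $F/\widetilde K$ is generated by primitive integrals over $\widetilde K$ together with exponentials of integrals, whence $G^{\circ}$ is Abelian), but the ``only if'' direction as proposed has a genuine gap.
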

We can  interpret this result is the following way. The fact that 
$\gal^{\circ} (F / K)$ is Abelian implies that any given second level integral can
be computed thanks to first level integral and exponential of integrals over
$\widetilde{K}$. 

\subsection{Proof of Theorem~\ref{thm:dudu}}

For sake of clarity, here we recall some classical facts about
Abelian algebraic groups and Abelian PV extensions. Most of these
results are contained in \cite{MR0376633}.

In a linear algebraic group $G$ any element $x \in G$ has a
Jordan-Chevalley decomposition $x = x_\mathrm{s} x_\mathrm{u} =
x_\mathrm{u} x_\mathrm{s}$ with $ x_\mathrm{u}$ and $ x_\mathrm{s}$
belonging to $G$. We shall denote by $G_\mathrm{s}$ and by
$G_\mathrm{u}$ the semi-simple and the unipotent parts of $G$.

\begin{proposition}
  \label{abgroups}
  \begin{enumerate}
  \item If $G$ is connected and Abelian then $G_\mathrm{s}$ and
    $G_\mathrm{u}$ are connected algebraic subgroups of $G$ and $G
    =G_\mathrm{s}\times G_\mathrm{u}$.
    
  \item In the previous context, $G_\mathrm{s}$ is a torus, i.e., is
    isomorphic to some $(G_{\mathrm{m}})^p$.
    
  \item A unipotent Abelian algebraic group is a vector group, i.e., is
    isomorphic to some $(G_{\mathrm{a}})^q$. Moreover, any algebraic
    group morphism between two of them is linear.
    
  \item If $0 \rightarrow (G_{\mathrm{a}})^q \rightarrow G \rightarrow
    G_1 \rightarrow 0$ is an exact sequence of connected Abelian
    algebraic group, then this sequence  splits. That is, $G$ contains
    a copy of $G_1$, and $G \simeq G_1 \times (G_{\mathrm{a}})^q$.
    
  \item A connected and Abelian linear algebraic group is isomorphic
    to some $ (G_{\mathrm{m}})^p\times (G_{\mathrm{a}})^q$.
  \end{enumerate}
\end{proposition}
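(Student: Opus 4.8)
The five assertions are the classical structure theory of connected commutative linear algebraic groups over the algebraically closed field $\C$ of characteristic zero, and the plan is to derive them in the order $1,2,3,5,4$, since $5$ is an immediate corollary of the first three and $4$ is most cleanly deduced once $5$ is available. Throughout I fix a faithful rational representation $G\hookrightarrow\mathrm{GL}(V)$ and use the Jordan--Chevalley decomposition recalled before the statement; the references for all standard ingredients are collected in \cite{MR0376633}, so I only indicate the mechanism of each step.

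For part $1$ the key input is that the Jordan decomposition is multiplicative on commuting elements: if $x,y\in G$ commute, then $x_{\mathrm{s}},y_{\mathrm{s}}$ are simultaneously diagonalizable and $x_{\mathrm{u}},y_{\mathrm{u}}$ are commuting unipotents, so $(xy)_{\mathrm{s}}=x_{\mathrm{s}}y_{\mathrm{s}}$ and $(xy)_{\mathrm{u}}=x_{\mathrm{u}}y_{\mathrm{u}}$. As $G$ is Abelian, this makes $s\colon x\mapsto x_{\mathrm{s}}$ and $u\colon x\mapsto x_{\mathrm{u}}$ morphisms of algebraic groups, so that $G_{\mathrm{s}}=s(G)$ and $G_{\mathrm{u}}=u(G)$ are connected closed subgroups. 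Since an element that is both semisimple and unipotent is the identity, $G_{\mathrm{s}}\cap G_{\mathrm{u}}=\{e\}$, and the multiplication $G_{\mathrm{s}}\times G_{\mathrm{u}}\to G$ is bijective with inverse the morphism $x\mapsto(s(x),u(x))$; hence $G=G_{\mathrm{s}}\times G_{\mathrm{u}}$.

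For part $2$, $G_{\mathrm{s}}$ is a connected commutative group of semisimple elements, hence simultaneously diagonalizable in $V$ and so a diagonalizable group; its character group is finitely generated Abelian, and connectedness forces it to be torsion-free, i.e.\ free of some rank $p$, whence $G_{\mathrm{s}}\cong(G_{\mathrm{m}})^p$. For part $3$, I would use that in characteristic zero the truncated logarithm and exponential give mutually inverse morphisms between a commutative unipotent group and its Lie algebra under addition, which is a vector group $(G_{\mathrm{a}})^q$; and any polynomial map $f\colon\C^q\to\C^r$ with $f(x+y)=f(x)+f(y)$ must be linear, because each homogeneous component $f_d$ of degree $d\geq2$ satisfies $(2^d-2)f_d=0$ and therefore vanishes over $\C$. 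Part $5$ is then immediate from parts $1$, $2$, $3$: $G=G_{\mathrm{s}}\times G_{\mathrm{u}}\cong(G_{\mathrm{m}})^p\times(G_{\mathrm{a}})^q$.

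For part $4$, the kernel $(G_{\mathrm{a}})^q$ is unipotent, hence contained in $G_{\mathrm{u}}$; writing $G=G_{\mathrm{s}}\times G_{\mathrm{u}}$ with $G_{\mathrm{u}}\cong(G_{\mathrm{a}})^{q'}$, part $3$ realizes this kernel as a linear subspace of $G_{\mathrm{u}}$, which therefore admits a linear complement $W$, so $G_{\mathrm{u}}=(G_{\mathrm{a}})^q\oplus W$. Then $G=G_{\mathrm{s}}\times(G_{\mathrm{a}})^q\times W$, the quotient by the kernel carries $G_{\mathrm{s}}\times W$ isomorphically onto $G_1$, and the inclusion $G_{\mathrm{s}}\times W\hookrightarrow G$ splits the sequence, giving $G\simeq G_1\times(G_{\mathrm{a}})^q$. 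I expect the genuine obstacle to lie in part $1$: the passage from the pointwise Jordan decomposition to the statement that $s$ and $u$ are morphisms of the whole group crucially uses commutativity together with simultaneous diagonalization of commuting semisimple operators, and it is here, together with the characteristic-zero input in part $3$ that makes the logarithm algebraic and additive maps linear, that all the real content sits; parts $2$, $4$, $5$ are then formal.
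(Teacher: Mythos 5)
Your proposal is correct, and its overall architecture matches the paper's: both treatments make the exponential/logarithm isomorphism for commutative unipotent groups and the linearity of morphisms between vector groups (part 3) the workhorse, deduce part 5 formally from parts 1--3, and reduce part 4 to linear algebra via part 3. The differences lie in two mechanisms. For linearity in part 3, the paper establishes additivity of $\varphi$ on $\Z^d$ and then invokes Zariski density of $\Z^d$ in $C^d$ together with polynomiality, whereas you decompose an additive polynomial map into homogeneous components and kill each degree $d\neq 1$ via $(2^d-2)f_d=0$; both arguments are valid and elementary. More substantially, for part 4 the paper applies the compatibility of morphisms with Jordan decomposition (Humphreys, Th.~15.3) to the arrow $G\to G_1$, obtaining $G_{\mathrm{s}}\simeq (G_1)_{\mathrm{s}}$ and an exact sequence of unipotent parts which it then splits linearly; you instead work inside $G$: the kernel is unipotent, hence by part 3 a linear subspace of $G_{\mathrm{u}}$, and a linear complement $W$ yields $G=G_{\mathrm{s}}\times(G_{\mathrm{a}})^q\times W$ with $G_{\mathrm{s}}\times W$ carried isomorphically onto the quotient. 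Your route avoids Th.~15.3 entirely, at the cost of identifying $G_1$ with $G_{\mathrm{s}}\times W$ (which is immediate, since quotienting a direct product by one factor yields the complementary factor). Finally, for parts 1 and 2 the paper simply cites Humphreys while you sketch proofs; your sketch of part 1 silently assumes that $x\mapsto x_{\mathrm{s}}$ is a morphism of varieties and not merely an abstract homomorphism --- commutativity gives the homomorphism property, but regularity is exactly the nontrivial content of the cited theorem (it requires, e.g., simultaneous triangularization so that $x\mapsto x_{\mathrm{s}}$ becomes extraction of the diagonal part). This is a classical point rather than an error, but it is the one place where your outline is thinner than the reference on which the paper leans.
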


\begin{proof}
  Point 1 is the theorem from~\cite{Humphreys:75::} page
  100. Point 2 follows from the theorem on page 104 in 
  \cite{Humphreys:75::}, and point 5 follows from 1, 2, and
  3.
  
  We prove point 3.  Let $G$ be unipotent and algebraic. It can be viewed as a closed
  subgroup of some $\mathrm{U} (n, C)$ the group of the $n \times n$
  unipotent upper triangular matrices. Hence, $\mathfrak{g}=
  \mathrm{Lie} (G)$ is a subalgebra of $\mathfrak{n}(n, C) =
  \mathrm{Lie} (\mathrm{U} (n, C))$, the Lie algebra of upper
  triangular nilpotent matrices. The exponential mapping $\exp :
  \mathfrak{n}(n, C) \rightarrow \mathrm{U} (n, C)$ is one to one with
  inverse the classical $\mathrm{Log}$ mapping. Therefore, $\exp$ is
  also a one-to-one mapping from $\mathfrak{g}$ to $G$. Since $\mathfrak{g}$ and
  $G$ are Abelian, $\exp$ is a group morphism, hence an
  isomorphism. More precisely, let $( N_1, \ldots, N_d)$ be a
  $C$-basis of $\mathfrak{g}$. The mapping $f : C^d
  \simeq(G_{\mathrm{a}})^d \rightarrow G$ given by
  \[ f (t_1, \ldots, t_d) := \exp (t_1 N_1 + \cdots + t_d N_d) = \exp
  (t_1 N_1) \times \cdots \times \exp (t_d N_d), \] is an isomorphism
  of algebraic groups.
  
  Now let $\varphi : G \simeq (G_{\mathrm{a}})^d \rightarrow G' \simeq
  (G_{\mathrm{a}})^q$ be a morphism of algebraic groups. From a
  $C$-basis $( e_1, \ldots, e_d)$ of $(G_{\mathrm{a}})^d$, we get
  $\forall \mathbf{t}= ( t_1, \ldots, t_d) \in \Z^d$,
  \begin{equation}
    \varphi (t_1 e_1 + \cdots + t_d e_d) = t_1 \varphi (e_1) + \cdots + t_d
    \varphi (e_d) . \label{eqlin}
  \end{equation}
  But since both terms of this formula are polynomial in $\mathbf{t}$,
  (\ref{eqlin}) holds for all $\mathbf{t} \in C^d$ since $\Z^d$ is
  Zarisky dense in $C^d$. Hence $\varphi$ is linear.
  
  It remains to prove point 4.  Let $f$ be the algebraic group
  morphism corresponding to the arrow $G \rightarrow G_1$. According
  to \citep[Th. 15.3, p. 99]{Humphreys:75::}, for all $x \in G$, $f
  (x)_{\mathrm{s}} = f (x_{\mathrm{s}})$ and $f (x)_{\mathrm{u}} = f
  (x_{\mathrm{u}})$. Hence, \ the restriction of $f$ to
  $(G)_{\mathrm{s}}$ must be surjective and possessing a trivial
  kernel since the semi-simple part of \ $(G_a)^q$ is trivial. As a
  consequence, $(G)_{\mathrm{s}} \simeq
  (G_1)_{\mathrm{s}}$. Similarly, we get an exact sequence for the
  unipotent parts $0 \rightarrow (G_{\mathrm{a}})^q \rightarrow
  (G)_{\mathrm{u}} \rightarrow (G_1)_{\mathrm{u}} \rightarrow
  0$. Thanks to point 1, we are reduced to prove that this latter sequence
  splits. But this is obviously true since by point 3 this sequence
  reduces to a sequence of linear spaces whose arrow are linear
  mappings.
\end{proof}

Now we are ready to prove Theorem~\ref{thm:dudu}.
\begin{proof}[Proof of Theorem~\ref{thm:dudu}]
  Let us assume that $G := \gal (F / K)$ is virtually Abelian. Let us 
  denote by $G_1 := \gal (F_1 / K)$. The proof will be the consequence
  of the two following steps

  {\textbf{First Step}}. The proof reduces to the case where $G$ is
  connected.  In this case $\widetilde{K} = K$, and $G_1$ is also
  connected and Abelian. According to the exact sequence
  (\ref{eqexactseq}) and Proposition \ref{abgroups}.(4),
  \[ G \simeq G_1 \times H \simeq G_1 \times (G_{\mathrm{a}})^q \text{
    for some } q \in \N. \] If $q = 0$ then $F = F_1$, and the Theorem
  follows. Now let us assume that $q \geq 1$. Let us set
  \[ M := F^{G_1} . \] Since $G_1 \vartriangleleft G$, the extension $M /
  K$ is Picard-Vessiot with Galois group
  $\gal (M / K) \simeq H \simeq (G_{\mathrm{a}})^q$. From \cite[Example
  1.141, p.32]{MR1960772}, there exist $J_1, \ldots, J_q \in M$ with
  $J_i' \in K$ such that $M = K (J_1, \ldots, J_q)$. The composition
  field $F_1 M$ is a differential sub-field of $F$ which has trivial
  stabilisator in $G$. Indeed, if $\sigma \in G$ fixes point-wise all
  the elements of $F_1 M$, then $\sigma \in H$ since it fixes all the
  elements belonging to $F_1$, and it also belongs to $G_1$ since it fixes
  all the elements belong to $M$. Therefore, $\sigma = \id$ and
  \[ F = F_1 M = F_1 (J_1, \ldots, J_q) . \] Since $\mathrm{tr}. \deg
  (F / F_1) = \dim_C (H) = q$, we deduce that $J_1, \ldots, J_q$ are
  algebraically independent over $F_1$.

  {\textbf{Second Step}} Let $\Phi$ be a second level integral. Since
  $\Phi' \in F_1$ and $\{J_1, \ldots, J_q \}$ is a transcendental
  basis of $F / F_1$, $\{\Phi, J_1, \ldots, J_q \}$ are $q + 1$
  algebraically dependant first level integrals over $F_1$. Therefore,
  according to the Ostrowski-Kolchin Theorem, see \cite{Kolchin:68::},
  there exist $(c_1, \ldots, c_q) \in C^q$ such that
  \[ \Phi - \sum_{i = 1}^q c_i J_i = R_1 \in F_1 . \] But $\Phi \in T
  (F / K)$ as well as $J := \sum_{i = 1}^q c_i J_i$.  Therefore, $R_1
  \in T (F_1 / K)$ and the first implication of the theorem is proved.

  {\textbf{Conversely}} If $G_1^{\circ}$ is Abelian and each second
  level integral has the form $\Phi = R_1 + J$, since $F / F_1$ is
  generated by those $\Phi$; it is also generated by the corresponding $J$ which are
  integral of first level w.r.t to $\tilde{K}$. But according to
   Exercise~1.41 on page 32 in \cite{MR1960772}, $F_1 / \tilde{K}$ is generated by
  some integrals of first level w.r.t to $\tilde{K}$ and exponentials of
  integrals. Therefore, the same happens for $F / \tilde{K}$ and
  $G^{\circ}$ is Abelian.

  {\textbf{Finally}}, if $\Phi$ can be written $\Phi=R_1+J$. Since $J'\in \tilde{K}$, 
  the conjugates of $J$ are of the form $\sigma(J)=J+h(\sigma)$ ,
  for some group morphism $h:G^{\circ}\rightarrow C$, for all $\sigma\in G^{\circ}$.
  As a consequence, \[\sigma(\Phi)-\Phi=\sigma(R_1)-R_1+h(\sigma)\in T(F_1/K).\]
This proves the necessary condition.

\end{proof}

\subsection{Additive and multiplicative  versions  of Theorem~\ref{thm:dudu}}

Here, we apply this result in two special cases
summarized in the following two lemmas.
\begin{lemma}
  \label{lem:add}
  Let $L / K$ and $L_1 / K$ be Picard-Vessiot extensions with $L_1 = K
  (I_1, \ldots, I_s)$, where $I_i' \in K$, for $i=1,\ldots,s$. Assume
  that there exists a $C$-linear combination $I$ of these integrals
  $I_i$ which is transcendental over K. Assume further that there
  exist $\Phi \in L$, and $w \in K^{\star}$ such that
  \[ \Phi' = wI, \mtext{ that is} \Phi = \int wI. \] Then, the Galois
  group $\gal(L / K)$ is virtually Abelian, implies that there exists a constant
  $c$ such that
  \[ c I -\int w \in K. \] Equivalently, $\Phi$ can be computed thanks
  to 
  a closed formula of the form
  \[ \Phi = P (I) + J \mtext{ with } P (I) := \frac{c}{2} I^2 + gI \in
  K [I], \mtext{ and } J' \in K. \]
\end{lemma}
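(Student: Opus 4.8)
The plan is to apply Theorem~\ref{thm:dudu} directly to the tower $K \subset L_1 \subset L$, where $L_1 = K(I_1,\ldots,I_s)$ is generated by first level integrals and $L/L_1$ is generated by the single second level integral $\Phi$ (note $\Phi' = wI \in K \subset T(L_1/K)$, so $\Phi$ is indeed a second level integral). First I would record what the algebraic closure $\widetilde K$ of $K$ in $L_1$ is: since $L_1/K$ is generated by primitive integrals $I_i$ and $I$ is transcendental over $K$, the extension is ``purely additive'' and one expects $\widetilde K = K$. I would verify this by noting that an algebraic element fixed by the unipotent (vector) Galois group $\gal(L_1/K) \simeq (G_{\mathrm a})^{\dim}$ must lie in $K$; any element algebraic over $K$ inside a unipotent extension is fixed, hence in $K$.

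With $\widetilde K = K$ in hand, Theorem~\ref{thm:dudu} says that virtual Abelianity of $\gal(L/K)$ forces the expansion $\Phi = R_1 + J$ with $R_1 \in T(L_1/K)$ and $J' \in \widetilde K = K$. The core of the proof is then to extract the stated closed form from this abstract decomposition by differentiating. Differentiating $\Phi = R_1 + J$ gives
\begin{equation*}
  wI = \Phi' = R_1' + J', \mtext{with} J' \in K.
\end{equation*}
The plan is to analyze this equation inside $L_1 = K(I_1,\ldots,I_s)$, viewing $R_1$ as an element of the Picard--Vessiot ring $T(L_1/K)$. Since the $I_i$ have derivatives in $K$, the ring $T(L_1/K)$ is a polynomial-type ring in the $I_i$ over (a localization of) $K$, and I would argue that $R_1$ must be a polynomial in $I$ of degree at most two: comparing the $I$-degree on both sides of $wI - J' = R_1'$, the right side is a derivative of an element of $K[I,\ldots]$ whose derivative drops degree appropriately, while the left side is linear in $I$. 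Writing $R_1 = \tfrac{c}{2}I^2 + gI + (\text{lower})$ and differentiating using $I' = w_0 \in K$ (where $I = \sum c_i I_i$ so $I' \in K$), one matches coefficients of $I$ to obtain $c\, I' \cdot \tfrac{?}{} $ relations; the coefficient-matching yields exactly $cI - \int w \in K$, equivalently $g := \int w - cI \in K$ up to the constant, giving $\Phi = \tfrac{c}{2}I^2 + gI + J$ with $J' \in K$.

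The main obstacle I anticipate is the precise bookkeeping of the structure of $T(L_1/K)$ and the degree count in $I$. One must be careful that $L_1$ may contain several independent primitive integrals $I_1,\ldots,I_s$, not just $I$, so $R_1$ a priori lives in a multivariate ring $K[I_1,\ldots,I_s]$ (localized); I would need to argue that the requirement $R_1' = wI - J' \in K \cdot I + K$ forces $R_1$ to depend on the $I_j$ only through the combination $I$ and at most quadratically. This is where the transcendence hypothesis on $I$ is essential, together with the Ostrowski--Kolchin-type independence of the $I_j$: any genuine dependence on an $I_j$ transverse to $I$ would produce a nonzero $K$-independent term in $R_1'$ that cannot be cancelled by $wI$ or by $J' \in K$. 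Once this is established the identification of $c$ and $g$ is routine. The equivalence with ``$\Phi$ can be computed by the closed formula'' then follows immediately by setting $P(I) = \tfrac{c}{2}I^2 + gI \in K[I]$ and absorbing the remaining primitive integral into $J$.
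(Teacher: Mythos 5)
Your proposal is correct in outline, but it takes a genuinely different route from the paper's proof. Both arguments pivot on Theorem~\ref{thm:dudu}, yet on different parts of its conclusion: you invoke the structural decomposition $\Phi = R_1 + J$ with $R_1\in T(L_1/K)$, differentiate, and extract the result by matching coefficients in the polynomial ring $T(L_1/K)$; the paper instead computes the Galois cocycle directly, $\sigma(\Phi)-\Phi = c(\sigma)\int w + d(\sigma)$, uses only the necessary condition $\sigma(\Phi)-\Phi\in T(L_1/K)$ from the theorem to conclude $\int w\in K[I]$, and then finishes by applying the Ostrowski--Kolchin theorem to the algebraically dependent primitive integrals $\int w$ and $I$ (the nonvanishing of the coefficient of $\int w$ coming from the transcendence of $I$). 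The paper also begins with the reduction, which you should adopt, to the case $L_1=K(I)$ -- legitimate since a $C$-linear combination of primitive integrals is again a primitive integral -- and this removes at a stroke the multivariate bookkeeping you correctly identify as the delicate point: in one variable, $T(L_1/K)=K[I]$ is an honest polynomial ring and your coefficient comparison closes cleanly (top coefficient constant, hence $a_1'=w-cI'$, hence $cI-\int w\in K$). Two cautions on your version: first, $\Phi'=wI$ lies in $T(L_1/K)$ but \emph{not} in $K$, since $I\notin K$; second, the phrase ``the derivative drops degree appropriately'' needs care -- the derivative of $aI^d$ has degree $d$ unless $a'=0$, so the degree bound on $R_1$ comes from first forcing the top coefficients to be constants and then using that no nontrivial constant combination of the (algebraically independent) $I_j$ lies in $K$. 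What each approach buys: yours produces the explicit closed form $\Phi=\tfrac{c}{2}I^2+gI+J$ as a byproduct of the computation, whereas the paper's avoids all degree analysis by outsourcing the dependence relation to Ostrowski--Kolchin and recovers the closed form only afterwards by integration by parts.
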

\begin{proof}
  Since a $C$-linear combination of primitive integrals over $K$
  still is a primitive integral over $K$, without loss of generality,
  we may prove the result in the restricted case where:
  \[ L_1 = K (I), \mtext{with} I' \in K, I \not\in K. \] For all $
  \sigma \in G:=\gal(L/K)$, we have an additive formula of the form
  \[ \sigma (I) = I + c (\sigma), \mtext{ with } c (\sigma) \in C. \]
  Hence,
  \begin{equation*}
    \sigma (\Phi') = \sigma (wI) = w \sigma (I)= 
    w (I + c (\sigma))= \Phi' + c (\sigma) w.
  \end{equation*}
  So, there exists some constant $d (\sigma)$ such that
  \begin{equation}
    \sigma (\Phi) - \Phi = c (\sigma) \int w + d (\sigma) \label{eq1+}
  \end{equation}
  Now let us assume that $G$ is virtually Abelian. According to
  Theorem~\ref{thm:dudu}, for all $\sigma\in G^{\circ}$,
  \begin{equation*}
   \sigma(\Phi)-\Phi =c(\sigma) \int w +d(\sigma) \in K [I] = T (L_1 / K).
  \end{equation*}
  Hence, if we choose $\sigma_0 \in G^{\circ}$ such that $c(\sigma_0)=1$, we get that
  \begin{equation*}
    \int w \in K [I].
  \end{equation*}
 Therefore, the two
  primitive integrals $\int w$ and $I$ are algebraically dependant
  over $K$.  In such a case, by the Ostrowski-Kolchin theorem, there
  are two constants $(\alpha, \beta) \in C^2 \backslash \{0\}$ such
  that
  \[ \alpha \int w + \beta I \in K. \] But in this relation $\alpha$
  cannot be zero since $I$ is transcendental, and the result follows.
  The converse implication follows by integration by part.
\end{proof}
\begin{lemma}
  \label{lem:mul}
  Let $L / K$ and $L_1 / K$ be Picard-Vessiot extensions with $L_1 = K
  (E_1, \ldots, E_s)$, where $E_i' / E_i \in K$, for $i=1,\ldots, s$.
  Assume that $L$ contain one element $\Phi$ of the following form
  \begin{equation}
    \label{eq:intm}
    \Phi:=\int \sum_{i=1}^rw_iM_i
  \end{equation}
  where $w_i\in K^{\star}$, and each
  \begin{equation*}
    M_i=M_i(E_1,\ldots, E_s)\in\C[E_1,E_1^{-1},\ldots,E_s,E_s^{-1} ], 
  \end{equation*}
  is a monomial, for all $1\leq i\leq r$. Moreover, $M_1, \ldots, M_r$ are
  not mutually proportional, i.e.,
  \begin{equation*}
    \frac{M_i}{M_j}\not\in K \mtext{for} i\neq j.
  \end{equation*}
  Then, we have:
  \begin{enumerate}
  \item Each separated integral $\Phi_i:=\int w_i M_i\in L$.
  \item If the extension $L/L_1$ is generated by $\Phi $, then $L/K$
    is virtually Abelian implies that for each $1\leq i \leq r$, there exists
    $c_i\in\C$, such that
    \begin{equation*}
      \frac{\Phi_i + c_i}{M_i} \in K.
    \end{equation*}
  \end{enumerate}
\end{lemma}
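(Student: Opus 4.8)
The plan is to exploit that each exponential generator is a common eigenvector of $G_1:=\gal(L_1/K)$. Since $E_i'/E_i\in K$, every $\sigma\in G_1$ acts by $\sigma(E_i)=\chi_i(\sigma)E_i$ for a character $\chi_i\colon G_1\to\C^{\star}$; consequently each monomial $M_i$ is a semi-invariant, $\sigma(M_i)=\chi_{M_i}(\sigma)M_i$ with $\chi_{M_i}=\prod_j\chi_j^{\,n_{ij}}$, and $G_1$ is a group of multiplicative type (a subgroup of a torus). The hypothesis $M_i/M_j\notin K=L_1^{G_1}$ is exactly the statement that $\chi_{M_1},\ldots,\chi_{M_r}$ are pairwise distinct characters of $G_1$.

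For the first assertion I would use the Galois action to separate the summands. Writing $\Phi_i:=\int w_iM_i$ and fixing the antiderivatives so that $\Phi=\Phi_1+\cdots+\Phi_r$, for any $\sigma\in G:=\gal(L/K)$ one computes $\sigma(\Phi)'=\sum_i w_i\sigma(M_i)=\bigl(\sum_i\chi_{M_i}(\sigma)\Phi_i\bigr)'$, whence $\sigma(\Phi)-\sum_i\chi_{M_i}(\sigma)\Phi_i\in\C$. Because the restriction $G\to G_1$ is surjective and, by Dedekind's theorem on linear independence of distinct characters, the vectors $(\chi_{M_1}(\sigma),\ldots,\chi_{M_r}(\sigma))$ span $\C^r$ as $\sigma$ ranges over $G_1$, I can choose $\sigma_1,\ldots,\sigma_r\in G$ for which the matrix $[\chi_{M_i}(\sigma_k)]$ is invertible. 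Inverting this linear system expresses each $\Phi_i$ as a $\C$-linear combination of the elements $\sigma_k(\Phi)\in L$ and constants, proving $\Phi_i\in L$.

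For the second assertion, note first that each $\Phi_i$ is a second level integral, since $\Phi_i'=w_iM_i\in T(L_1/K)$ (every $E_i^{\pm1}$, hence every monomial, lies in the Picard--Vessiot ring). Assuming $L/L_1$ is generated by $\Phi$ --- so that $L=L_1(\Phi_1,\ldots,\Phi_r)$ by the first part --- and that $G$ is virtually Abelian, Theorem~\ref{thm:dudu} applies to $K\subset L_1\subset L$ and yields, for each $i$, a decomposition $\Phi_i=R_1+J$ with $R_1\in T(L_1/K)$ and $J'\in\widetilde K$. I would then decompose $T(L_1/K)$ into its $G_1$-weight (monomial) components and project the identity $w_iM_i=R_1'+J'$ onto the weight $\chi_{M_i}$. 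Writing $R_1=\sum_N r_NN$ over monomials $N$ with $r_N\in K$, the derivative $R_1'=\sum_N(r_N'+r_N\,N'/N)N$ is again weight-graded; matching the $M_i$-component gives $r_{M_i}'+r_{M_i}(M_i'/M_i)=w_i$, that is $(r_{M_i}M_i)'=w_iM_i=\Phi_i'$. Hence $\Phi_i-r_{M_i}M_i$ is a constant $-c_i$, and $(\Phi_i+c_i)/M_i=r_{M_i}\in K$, as claimed.

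The delicate point is the weight projection in the second part, and it is clean precisely when $G_1$ is connected (a torus): then $\widetilde K=K$, so $J'\in K$ contributes only to the trivial weight and cannot interfere with the weight $\chi_{M_i}\neq1$, and each nontrivial weight space of $T(L_1/K)$ is the single line $K\cdot M_i$. I therefore expect the main work to lie in justifying the reduction to this connected (toral) situation --- equivalently, in excluding the degenerate case where some $M_i$ is algebraic over $K$ (so that $\chi_{M_i}$ is trivial on $G^{\circ}$ and $M_i\in\widetilde K$), for which the stated conclusion genuinely requires the monomials to be transcendental. In the connected case the necessary condition $\sigma(\Phi_i)-\Phi_i\in T(L_1/K)$ from Theorem~\ref{thm:dudu} is automatically compatible with the semi-invariance $\sigma(\Phi_i)=\chi_{M_i}(\sigma)\Phi_i+d_i(\sigma)$, and no further obstruction arises.
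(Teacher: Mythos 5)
Your proof of part~1 is essentially the paper's: semi-invariance of the monomials, distinctness of the characters $\chi_{M_i}$ from $M_i/M_j\notin K$, Artin--Dedekind independence, and inversion of the matrix $[\chi_{M_i}(\sigma_k)]$. For part~2, however, you take a genuinely different route. The paper pointedly does \emph{not} use Theorem~\ref{thm:dudu} here (it even remarks on this after the proof): it reduces without loss of generality to a single exponential with $L_1=K(E)$ and $\trdeg(L_1/K)=1$, forms the two-dimensional representation $\sigma\mapsto\left[\begin{smallmatrix}1 & d(\sigma)\\ 0 & \lambda(\sigma)\end{smallmatrix}\right]$ of $G^{\circ}$ into the affine group, observes that since $\lambda$ is onto $\C^{\star}$ the image contains a maximal torus and hence, if Abelian, \emph{is} one, simultaneously diagonalises to get $d(\sigma)=c[\lambda(\sigma)-1]$, and concludes that $(\Phi+c)/E$ is $G^{\circ}$-invariant and therefore lies in $K$. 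Your argument instead feeds each $\Phi_i$ into Theorem~\ref{thm:dudu} and projects the identity $w_iM_i=R_1'+J'$ onto the $\chi_{M_i}$-isotypic line $K\,M_i$ of $T(L_1/K)$; this is clean, handles all the monomials at once without the one-variable reduction, and the projection does kill $J'$ as soon as $\chi_{M_i}$ is nontrivial on $G^{\circ}$ (you do not actually need $G_1$ connected for this, only that $M_i\notin\widetilde K$). The transcendence caveat you flag is real --- if some $M_i$ is algebraic over $K$ the conclusion can genuinely fail (e.g.\ $K=\C(x)$, $M_1=\sqrt{x}$, $\Phi_1=\int\sqrt{x}/(x-1)$ gives a virtually Abelian group with $(\Phi_1+c)/M_1\notin K$) --- but note that the paper's own proof makes exactly the same tacit assumption when it declares $\trdeg(L_1/K)=1$ in its reduction, and the hypothesis is satisfied everywhere the lemma is applied, so your approach is on the same footing as the original rather than behind it.
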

\begin{proof}
  First we prove point $1$.  Let us take $M=E_1^{n_1}\cdots E_s^{n_s}
  $ with $n_1, \ldots, n_s \in \Z$. If for $\sigma\in\gal(L/K)$ we
  have
  \begin{equation*}
    \sigma(E_i)=\rho_i(\sigma)E_i, \mtext{for} 1\leq i \leq s,
  \end{equation*}
  then
  \begin{equation*}
    \sigma(M)=\chi_M(\sigma)M,
  \end{equation*}
  where $\chi_M$ is a character of $\gal(L/K)$ given by
  \begin{equation*}
    \chi_M(\sigma):=\rho_1(\sigma)^{n_1}\cdots \rho_s(\sigma)^{n_s}.
  \end{equation*}
  We denote $\chi_i=\chi_{M_i}$ for $1\leq i\leq r$.  As by assumption
  $M_i/M_j\not\in K$ for $i\neq j$, we have
  \begin{equation*}
    \chi_i \neq \chi_j, \mtext{for} i\neq j, 
  \end{equation*}
  Now, for each $\sigma\in\gal(L/K)$ we have
  \begin{equation*}
    \sigma(\Phi')=\sum_{j=1}^r\sigma(w_jM_j)=\sum_{j=1}^r
    \chi_j(\sigma)w_jM_j= \sum_{j=1}^r
    \chi_j(\sigma) \Phi'_j.
  \end{equation*}
  Hence, we get that for each $\sigma\in\gal(L/K)$
  \begin{equation}
    \label{eq:sF}
    \sigma(\Phi)-\sum_{j=1}^r
    \chi_j(\sigma) \Phi_j \in \mathbb{C} \Longrightarrow\sum_{j=1}^r
    \chi_j(\sigma) \Phi_j \in L.
  \end{equation}
  Now, since $\chi_i\neq\chi_j$ for $i\neq j$, by the Artin-Dedekind
  lemma, the characters $\chi_1, \ldots, \chi_r$ are $\C$-linearly
  independent. As a consequence there exist $r$ elements $\sigma_1,
  \ldots, \sigma_r$ of $\gal(L/K)$ such that $r\times r$ matrix
  $[\chi_j(\sigma_i)]$ is invertible. If we write $r$ corresponding
  equations~\eqref{eq:sF} and invert this system we obtain that each
  $\Phi_j\in L$, for $1\leq j\leq r$.

  Now, we prove point 2. As in the proof of the previous Lemma, we can
  assume without loss of the generality that here, $\Phi=\int w E$,
  where 
  \[ L_1 = K (E), \mtext{with} E' / E \in K, \mtext{and} \trdeg (L_1 /
  K) = 1. \] For each $\sigma \in G:=\gal(K/L)$, we have a
  multiplicative formula of the form
  \[ \sigma (E) = \lambda (\sigma) E, \mtext{with} \lambda (\sigma)
  \in \C^{\star} . \] Hence,
  \begin{equation*}
    \sigma (\Phi')  =  \sigma (wE) = w \sigma (E)= \lambda (\sigma) \Phi' .
  \end{equation*}
  So, there exists some constant $d (\sigma)$ such that
  \begin{equation}
    \sigma (\Phi) = \lambda (\sigma) \Phi + d (\sigma) \label{eq1m}
  \end{equation}
  From~\eqref{eq1m}, the linear representation of $G^{\circ}$ in $V :=
  \vspan_{\C} \{1, \Phi\}$, gives a morphism of algebraic groups
  \[
  \rho : G^{\circ} \rightarrow G_{\mathrm{aff}}, \qquad \sigma \mapsto
  \rho (\sigma) :=
  \begin{bmatrix}
    1 & d (\sigma)\\
    0 & \lambda (\sigma)
  \end{bmatrix}.
  \]
  The affine group $G_{\mathrm{aff}} \simeq G_{\mathrm{m}} \ltimes
  G_{\mathrm{a}}$ is a solvable, non Abelian group of dimension
  two. Since $\lambda (\sigma)$ cover all non zero constant values,
  the image $\rho(G^{\circ})$ contains a maximal torus which is
  therefore isomorphic to $G_\mathrm{m}$.

  So, if we assume that $G^{\circ}$ is Abelian, then the image
  $\rho(G^{\circ})$ must be a maximal torus. As a consequence, all
  the matrices $\rho (\sigma)$ with $\sigma \in G^{\circ}$ can be
  simultaneously diagonalisable in a fixed basis
  \[ \left\{ [1,0]^T, [c,1]^{T}\right\}, \mtext{for some} c \in C. \]
  Hence, by direct computation we get that the $\rho (\sigma)$ are of
  the form
  \[ \rho (\sigma) = \begin{bmatrix}
    1 & d (\sigma)\\
    0 & \lambda (\sigma)
  \end{bmatrix}
  \mtext{with} d (\sigma) = c [\lambda (\sigma) - 1] .
  \]
  Taking this into account, we see that for all $\sigma \in
  G^{\circ}$, formula \eqref{eq1m} reads
  \begin{equation*}
    \sigma (\Phi)  =  \lambda (\sigma) \Phi + c [\lambda (\sigma) -
    1]. 
  \end{equation*}
  So
  \begin{equation*}
    \sigma (\Phi + c)  = \lambda (\sigma) [\Phi + c], 
  \end{equation*}
  an this gives
  \begin{equation*}
    \sigma \left( \frac{\Phi + c}{E}\right)  =  \frac{\Phi + c}{E} .
  \end{equation*}

  This means that $(\Phi + c) / E$ is algebraic over $K$. But now
  $\Phi + c$ is a first level integral with respect to $K (E)$. That
  it is algebraic over this field implies that it belongs to $K
  (E)$. But now, the restriction morphism $G^{\circ} \rightarrow \gal
  (K (E) / K) \simeq G_{\mathrm{m}}$ is surjective.  Therefore, $(\Phi
  + c) / E$ is fixed by $G^{\circ}$ implies that it belongs to $K$.
\end{proof}
Let us observe that, curiously, the last lemma was proved without any
reference to Theorem~\ref{thm:dudu}. 

\section{Potential of degree $k=2$ and their  $\mathrm{VE}_p$}  
\label{sec:k=2}
In this section we prove Theorem~\ref{thm:oscillator} and  also give 
some results concerning resonant cases. 

\subsection{The $\mathrm{VE}_p$ for $k=2$}
According to Proposition~\ref{pro:split}, to find the necessary and
sufficient conditions which guarantee that $\gal(\VE{2})$ is
virtually Abelian, we have to find such conditions for
$\VE{2,\alpha}^{\gamma}$ and $\EX{2,(\alpha,\beta)}^{\gamma}$, with
$\alpha, \beta, \gamma \in \{1,\ldots, n\}$ such that $\alpha\neq
\beta$.

Since we consider the case $k=2$, a particular solution $\varphi(t)$ with
energy $e=1/2$ satisfies ${\dot\varphi}^2+ \varphi^2=1$.  Thus, we can
take, e.g., \[\varphi(t)=\sin t.\] We denote by $\vGamma$ the corresponding phase curve in
$\C^{2n}$. The first order variational equation
\begin{equation*}
  \ddot \vq_1:= -  \varphi^{k-2}V''(\vd)\vq_1=  -  V''(\vd)\vq_1
\end{equation*}
is a matrix second order equation with constant coefficients over the differential ground field
\[K=\C(\varphi,\dot\varphi)=\C\left(\rme^{\rmi t}\right).\] Hence, we
can rewrite it in the following form 
\begin{equation}
  \label{eq:ve11k2}
  {\dot \vx}_1=\vA \vx_1, \mtext{where} 
  \vA:=\begin{bmatrix}
    \phantom{-}\vzero_n & \vE_n \\
    -V''(\vd) &\vzero_n
  \end{bmatrix}\in \mathrm{sp}(2n,\C).
\end{equation}
It is easy to show that the matrix $\vA$ has eigenvalues $\pm \rmi \omega
$, where $\omega^2=\lambda$, when $\lambda$ span the eigenvalues of
$V''(\vd)$.  Thus, the entries of the fundamental matrix $\vX_1$ of
equation~\eqref{eq:ve11k2} belong to a ring of the form
\begin{equation*}
  R_1:=\C\left(\rme^{\rmi t}\right)\left[\rme^{\pm\rmi\omega_1
      t};\ldots; 
\rme^{\pm\rmi\omega_n t};t \right],
\end{equation*}
where $\omega_1^2, \ldots, \omega_n^2$ are the eigenvalues of $V''(\vd)$.

With these notations and assumptions, $\VE{2,\alpha}^{\gamma}$ can be
rewritten into the following form 
\begin{equation}
\label{eq:ve2ag}
\left.
\begin{aligned}
\ddot x=&-\omega^2_\alpha x, \\
\ddot z=& -\omega_\gamma^2 z+
\frac{\theta^\gamma_{\alpha,\alpha}}{\sin t} x^2,
\end{aligned} \quad \right\}
\end{equation}
where, to simplify notations, instead of blind variables $
q_{\alpha,1}$  and  $q_{\gamma,2}$,   we introduce  $x$ and $z$. In a
similar way we rewrite   $\EX{2,\alpha}^{\gamma}$ into the form 
\begin{equation}
\label{eq:vx2abg}
\left.
\begin{aligned}
\ddot x=&-\omega_\alpha x, \\
\ddot y=&-\omega_\beta y, \\
\ddot z=& -\omega_\gamma z +
2 \frac{\theta^\gamma_{\alpha,\beta}}{\sin t} xy.
\end{aligned} \quad \right\}
\end{equation}

The last equations in~\eqref{eq:ve2ag} and~\eqref{eq:vx2abg} have the
same  form 
\begin{equation}
  \label{eq:ob}
  \ddot z = -\omega^2 z  + \frac{b(t)}{\sin t}, 
\end{equation}
with $\rme^{\rmi\omega t}$  and $b(t)$ belonging to  $R_1$.  

Now, we consider equation~\eqref{eq:ob} over $K=\C\left(\rme^{\rmi t}\right)$. Our aim
is to compute its Picard-Vessiot extension $L/K$.  Let $L_1$ be the
Picard-Vessiot extension of $K$ containing: $b(t)$, $\dot b(t)$, and
all the solutions of $\ddot z=-\omega^2 z$. Since $b(t)$ is holonomic over
$K$, it belongs to the Picard-Vessiot ring $T(L_1/K)$. Moreover, the
extension $L/L_1$ is generated by the second level integrals over $K$.
The form of these integrals is described into the following property. 
\begin{lemma}
  \label{lem:b}
With the above notations the following statements hold true.
  \begin{enumerate}
  \item If $\omega=0$, then $L/L_1$ is generated by 
    \begin{equation*}
      \int \frac{b(t)}{\sin t} \,\rmd t, \mtext{and} \int \frac{tb(t)}{\sin t} \,\rmd t. 
    \end{equation*}
  \item If $\omega\neq 0$, then $L/L_1$ is generated by 
    \begin{equation*}
      \int \frac{\rme^{\rmi\omega t}b(t)}{\sin t} \,\rmd t,
      \mtext{and} 
\int \frac{\rme^{-\rmi\omega t}b(t)}{\sin t} \,\rmd t.
    \end{equation*}
  \end{enumerate}
\end{lemma}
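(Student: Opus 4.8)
The plan is to solve the inhomogeneous equation~\eqref{eq:ob} over the intermediate field $L_1$ by the classical variation of constants method, treating the forcing term $g(t):=b(t)/\sin t$ as a known element of $L_1$. The key point is that a fundamental system of the homogeneous equation $\ddot z=-\omega^2 z$ already lies in $L_1$, so that $L/L_1$ will be generated precisely by the two integrals produced by variation of constants. First I would record that $1/\sin t\in K$: writing $u=\rme^{\rmi t}$ we have $\sin t=(u-u^{-1})/(2\rmi)=(u^2-1)/(2\rmi u)$, so $1/\sin t=2\rmi u/(u^2-1)\in\C(u)=K$ (away from the zeros of $\sin t$). Since $b(t)\in T(L_1/K)$ by hypothesis, since each homogeneous solution is holonomic over $K$ and hence lies in $T(L_1/K)$, and since $T(L_1/K)$ is a ring, every integrand appearing below is a product of elements of $T(L_1/K)$ and therefore lies in $T(L_1/K)$; this is exactly what is required for the resulting integrals to be second level integrals over $K$ in the sense of the Definition.

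Next I would split into the two cases according to $\omega$. For $\omega\neq0$ I take the fundamental system $z_1=\rme^{\rmi\omega t}$, $z_2=\rme^{-\rmi\omega t}$, whose Wronskian $W=z_1\dot z_2-\dot z_1 z_2=-2\rmi\omega$ is a nonzero constant; variation of constants then yields the particular solution
\[
z_p=\frac{1}{2\rmi\omega}\left[\rme^{\rmi\omega t}\int\frac{\rme^{-\rmi\omega t}b(t)}{\sin t}\,\rmd t-\rme^{-\rmi\omega t}\int\frac{\rme^{\rmi\omega t}b(t)}{\sin t}\,\rmd t\right].
\]
For $\omega=0$ I take instead $z_1=1$, $z_2=t$, with constant Wronskian $W=1$, giving
\[
z_p=t\int\frac{b(t)}{\sin t}\,\rmd t-\int\frac{t\,b(t)}{\sin t}\,\rmd t.
\]
In both cases the general solution of~\eqref{eq:ob} is $z_p$ plus an element of the homogeneous solution space, and the latter already belongs to $L_1$.

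Finally I would conclude that $L=L_1(z_p)$, and recover the two integrals individually. Since $L$ is a differential field it also contains $\dot z_p$, and the variation-of-constants relations $z_p=c_1 z_1+c_2 z_2$, $\dot z_p=c_1\dot z_1+c_2\dot z_2$, in which $c_1,c_2$ denote (up to nonzero constants) the two integrals displayed above, form a $2\times2$ linear system whose determinant is the nonzero constant Wronskian; inverting it gives $c_1,c_2\in L$. As $z_1,z_2\in L_1$, this shows that $L/L_1$ is generated exactly by the two integrals asserted in each case. The computation is essentially routine; the only step requiring genuine care is the verification that the integrands lie in the Picard--Vessiot \emph{ring} $T(L_1/K)$ rather than merely in $L_1$, since it is this finer membership (together with $1/\sin t\in K$) that certifies the integrals as true second level integrals over $K$, to which Theorem~\ref{thm:dudu} and Lemmas~\ref{lem:add}--\ref{lem:mul} can then be applied.
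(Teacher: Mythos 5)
Your proof is correct and follows essentially the same route as the paper: the paper also solves \eqref{eq:ob} by variation of constants (written as a first-order system with fundamental matrix $\vZ\in\mathrm{GL}(2,L_1)$ and $\dot\vc=\vZ^{-1}\vb$, rather than via the scalar Wronskian formula), and likewise concludes that $L/L_1$ is generated by the two components of $\vc$, which are exactly the stated integrals. Your additional check that the integrands lie in $T(L_1/K)$ matches the remark the paper makes just before the lemma.
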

\begin{proof}
  We rewrite equation~\eqref{eq:ob} as the following non-homogeneous
  linear system
  \begin{equation}
    \label{eq:ys}
    \begin{bmatrix}
      \dot z_1\\
      \dot z_2
    \end{bmatrix} = \vA
    \begin{bmatrix}
      z_1\\
      z_2
    \end{bmatrix}
    + \vb(t)
  \end{equation}
  where
  \begin{equation*}
    \vA=
    \begin{bmatrix}
      0 & 1 \\
      -\omega^2 & 0
    \end{bmatrix}, \mtext{and} \vb= \frac{1}{\sin t}
    \begin{bmatrix}
      0 \\
      b(t)
    \end{bmatrix}
  \end{equation*}
Let us denote by $\vZ$ a fundamental matrix of solutions of the
homogeneous part of~\eqref{eq:ys}.  The variation of constants gives a
particular solution of the form $\widehat \vz =\vZ\vc$ with
$\vc=[c_1,c_2]^T$ satisfying 
\begin{equation*}
\Dt \vc =\vZ^{-1}\vb.
\end{equation*}
Since $\vZ\in \mathrm{GL}(2,L_1)$, and $L/L_1$ is generated by the two
entries of $\widehat \vz$, it is also generated by $c_1$ and $c_2$. 

 Now, if $\omega=0$, then 
  \begin{equation*}
    \vZ =  \begin{bmatrix}
      1& t\\
      0  & 1 
    \end{bmatrix}, \mtext{and}  \dot\vc =\vZ^{-1}\vb=  \frac{b(t)}{\sin t} 
    \begin{bmatrix} 
      - t \\
      \phantom{-}1
    \end{bmatrix}
  \end{equation*}
  If $\omega\neq 0$, then
  \begin{equation*}
    \vZ:=\begin{bmatrix}
      \phantom{i}\rme^{\rmi\omega t } &  \phantom{-i} \rme^{-\rmi\omega t }  \\
      \rmi \omega\rme^{\rmi\omega t } &  -\rmi \omega\rme^{-\rmi\omega t } 
    \end{bmatrix}, 
\mtext{and}\dot\vc =\vZ^{-1}\vb=  \frac{\rmi b(t)}{2\omega\sin t} 
    \begin{bmatrix} 
      -\rme^{-\rmi\omega t } \\
      \;\rme^{\rmi\omega t }
    \end{bmatrix}
  \end{equation*}
Now, in both cases, the claim follows easily.
\end{proof}

%
\subsection{Proof of Theorem~\ref{thm:oscillator}}

\subsubsection{Taylor expansion of $V$ around the Darboux points.}
We can  assumed that the considered Darboux point $\vd$ satisfies
$V'(\vd)=\vd$. Moreover, we also assume that
$V''(\vd)=\diag(\omega_1^2, \ldots,\omega_{n-1}^2 ,\omega_n^2)$.
From the Euler identity we easily deduce that
$V''(\vd)\vd=\vd$. Hence, $1$ is an eigenvalue of $V''(\vd)$ with the
corresponding eigenvector $\vd$. This implies that one of $\omega_i^2$
is one, and we can assume that $\omega_n=1$. Thus,
$\vd=[0,\ldots,0,1]^T$.
We also assumed that $V$ is  analytic around this Darboux point. So,
we have the following 
 Taylor expansion 
\begin{equation*}
V(\vq)=\sum_{\abs{\valpha}\geq0}^{\infty}\frac{1}{\valpha!} 
\partial^{\valpha}V(\vd)\cdot{\widetilde\vq}^{\valpha}
\end{equation*}
where $\widetilde\vq=\vq-\vd=(q_1, \ldots, q_{n-1},q_n-1)$. In the
above 
we use the standard multi-index notations. That is, 
\begin{equation*}
\valpha=(\alpha_1,\ldots, \alpha_n)\in\N^n, \qquad
\abs{\valpha}:=\sum_{i=1}^n\alpha_{i}, \qquad \valpha!=\alpha_1!
\cdots \alpha_n!, \qquad
\vq^{\valpha}:=q_1^{\alpha_1}\cdots q_n^{\alpha_n}, 
\end{equation*}
and 
\begin{equation*}
\partial^{\valpha}:=\partial_1^{\alpha_1}\cdots \partial_n^{\alpha_n}, 
\mtext{where} \partial_i^{\alpha_i}:=
\frac{\partial^{\alpha_i}\phantom{l}}{\partial
q_i^{\alpha_i}}.
\end{equation*}
Taking into account that  $V'(\vd)=\vd$,  we also  have  by the Euler
identity   $V(\vd)=1/2$,
and the above expansion can be written in the form 
\begin{equation}
\label{eq:ext}
V(\vq)= \frac{1}{2}\left(\omega_1^2q_1^2+\cdots+
  \omega_{n-1}^2q_{n-1}^2 +q_n^2\right) +\sum_{\abs{\valpha}\geq3}
\frac{1}{\valpha!} \partial^{\valpha}V(\vd)\cdot{\widetilde\vq}^{\valpha}.
\end{equation}
Thus, our aim is to prove that 
\begin{equation*}
 \partial^{\valpha}V(\vd) =0, \mtext{for all} \abs{\valpha}=m\geq 3,
\end{equation*}
 when the potential is integrable. This will be done by induction with respect to $m$. 
 
Let us return to the equations of motion written in Newton form given by equation
~\eqref{eq:n}, i.e., 
\begin{equation*}
\ddot\vq=\vF(\vq), 
\end{equation*}
where $ \vF(\vq) = -V'(\vq)$, i.e., 
\begin{equation*}
F_i(\vq)=-\pder{V}{q_i}(\vq)=-\partial^{\vveps_i}V(\vq).
\end{equation*}
In the above, the $i$-th component of multi-index $\vveps_i$ is equal
one, and all remaining vanish.  Thus, we have 
\begin{equation}
\label{eq:alFi}
\partial^{\valpha}F_i(\vd) =- \partial^{\valpha+\vveps_i}V(\vd), 
\end{equation}
or equivalently
\begin{equation*}
\partial^{\valpha}V(\vd) = -\partial^{\valpha - \vveps_i}F_i(\vd).
\end{equation*}

\subsubsection{The induction procedure.}
Let $p\geq 2$. We assume that \VE{p} has  the following simple form 
\begin{equation}
\label{eq:spf}
\left. 
\begin{aligned}
\ddot x_i&=-\omega_i^2 x_i, \qquad 1\leq i \leq n, \\
\ddot y_j&= -\omega_j^2 y_j +
\sum_{\abs{\valpha}=p}\frac{\xi^j_{\valpha}\vx^{\valpha}}{\sin^{p-1}(t)},
\qquad
1\leq j\leq n,
\end{aligned} \quad \right\}
\end{equation}
where $\vx=(x_1,\ldots, x_n)$, and 
\begin{equation*}
\xi^j_{\valpha}:= \frac{1}{\valpha!}\partial^{\valpha}F_j(\vd).
\end{equation*}
According to~\eqref{eq:ve2ag} and~\eqref{eq:vx2abg}, we notice that $\VE{2}$ has  such form. For $p\geq 3$, according
to Section~\ref{ssec:redu}, \VE{p} will have such a simple form  iff 
\begin{equation*}
\partial^{\vgamma}F_j(\vd)=0, \mtext{with} 1\leq j\leq n 
\end{equation*}
whenever, 
\begin{equation*}
2\leq \abs{\vgamma}<p. 
\end{equation*}
Our aim is therefore to prove that  $\xi^j_{\valpha}=0$, for all  $1\leq j\leq n
$, and all $\valpha$ such that $\abs{\valpha}=p$.

\paragraph{First Step.}
Let $K=\C(\rme^{\rmi t})$ be our ground field,  and $L:=\PV(\VE{p})$ be
the Picard-Vessiot extension of \VE{p} over $K$. The field $L$
contains 
\begin{equation*}
L_1:=\PV(\VE{1})=K(\rme^{\rmi \omega_1 t}, \ldots,
 \rme^{\rmi \omega_{n-1} t}).
\end{equation*}
According to  Lemma~\ref{lem:b}, $L$ contains second level integrals of the
form 
\begin{equation*}
\Phi_j=\int\sum_{\abs{\valpha}=p}\frac{\xi^j_{\valpha}\vx^{\valpha}\rme^{\rmi
  \omega_jt}}{\sin^{p-1}(t)}, \mtext{for all} 1\leq  j \leq n.
\end{equation*}
In particular, if we choose 
\begin{equation*}
\vx=(x_1,\ldots, x_n)=(\rme^{\rmi\omega_1t},\ldots, \rme^{\rmi
\omega_nt}), \mtext{with} \omega_n=1,
\end{equation*}
then 
\begin{equation*}
\vx^{\valpha}=
\rme^{\rmi(\alpha_1\omega_1 + \cdots+\alpha_n\omega_n) t} =
\rme^{\rmi(\valpha\cdot\vomega) t},
\end{equation*}
and $L$ contains integrals of the form 
\begin{equation}
\label{eq:fijs}
\Phi_j=\sum_{\abs{\valpha}=p}\xi^j_{\valpha} \int 
\frac{1}{\sin^{p-1}(t)}\rme^{\rmi(\valpha\cdot\vomega+\omega_j)t}=
\sum_{\abs{\valpha}=p}\xi^j_{\valpha}T_{p-1}^{(\valpha\cdot\vomega+\omega_j)},
\end{equation}
where the ``trigonometric integrals'' $T_{n}^{(\omega)}$ are defined and studied in the Appendix.

In accordance with Lemma~\ref{lem:mul}, let us consider the following
monomials 
\begin{equation*}
M_{\valpha}:= \rme^{\rmi(\valpha\cdot\vomega+\omega_j)t}
\end{equation*}
with fixed $j$. We claim that they are not mutually proportional.  We
prove this claim by contradiction.  Thus, let $\valpha$  and
$\valpha'$ be two different multi-indices, and assume that
\begin{equation*}
\frac{M_{\valpha'}}{M_{\valpha}}= 
\rme^{\rmi(\valpha' -\valpha)\cdot\vomega t}\in K=\C(\rme^{\rmi t}).
\end{equation*}
This implies that 
\begin{equation*}
(\valpha' -\valpha)\cdot\vomega = (\alpha'_1-\alpha_1)\omega_1+\cdots 
(\alpha'_{n-1}-\alpha_{n-1})\omega_{n-1}+(\alpha'_n-\alpha_n)=m\in\Z.
\end{equation*}
But, since $\omega_1,\ldots, \omega_{n-1}$ are $\Z$-linearly
independent, this implies that 
\begin{equation*}
\alpha'_i=\alpha_i \mtext{for} 1\leq i <n. 
\end{equation*}
Since $\abs{\valpha}=\abs{\valpha'}$, we also have 
$\alpha'_n=\alpha_n$, and in effect $\valpha'=\valpha$. This is a
contradictory with the assumption that  $\valpha'\neq\valpha$. In this way
we proved our claim. 

Now, according to the first point of Lemma~\ref{lem:mul} we have that
each term in the sum~\eqref{eq:fijs} is an element of $L$, i.e., 
\begin{equation*}
\xi^j_{\valpha}T_{p-1}^{(\valpha\cdot\vomega+\omega_j)}\in L.
\end{equation*}
Moreover, according to the second point of  Lemma~\ref{lem:mul}, we
know that if $\gal(L/K)$ is virtually Abelian, then there exists a
constant $c$ such that 
\begin{equation*}
\frac{\xi^j_{\valpha}T_{p-1}^{(\valpha\cdot\vomega+\omega_j)}+c}{
\rme^{\rmi(\valpha\cdot\vomega+\omega_j)t}}\in K=\C(\rme^{\rmi t}).
\end{equation*}
It follows that $\xi^j_{\valpha}T_{p-1}^{(\valpha\cdot\vomega+\omega_j)}$
is meromorphic on $\C$. When $p=2$, by Lemma A1 $
T_{p-1}^{(\valpha\cdot\vomega+\omega_j)}$ is never meromorphic, and hence
$\xi^j_{\valpha}=0$. When $p\geq 3$, by Lemma A4, $T_{p-1}^{(\valpha\cdot\vomega+\omega_j)}$ 
is not meromorphic unless that  
\begin{equation*}
\valpha\cdot\vomega+\omega_j\in\Z.
\end{equation*}
Since $\valpha\in\N^n$,  the condition $\valpha\cdot\vomega+\omega_j\in\Z$  is 
satisfied only when $j=n$ and
$\valpha=(0,\ldots,0,p)$. 

Summarizing, we proved that if $\valpha\neq(0,\ldots,0,p)$, or $j\neq
n$, then  $\xi^j_{\valpha}=0$.
 
\paragraph{Second Step.}
It still has to be shown that $\xi^j_{\valpha}=0$, for $j=n$ and
$\valpha=\valpha_0=(0,\ldots,0,p)$.  We know that 
\begin{equation*}
\xi^j_{\valpha}=\frac{1}{\valpha!}\partial^{\valpha} F_j(\vd) = 
-\frac{1}{(\valpha+\vveps_j)!}\partial^{\valpha + \vveps_j} V(\vd),
\end{equation*}
and so, $\xi^n_{\valpha_0}$ is proportional to
$\partial^{p+1}_nV(\vd)$. Notice that  $\partial^{p}_nV(\vq)$ is
homogeneous of degree $(2-p)$. Thus, we have thanks to Euler's identity
\begin{equation*}
\sum_{i=1}^n q_i \partial_i\partial^{p}_nV(\vq) = (2-p)\partial^{p}_nV(\vq).
\end{equation*}
Evaluating both sides of the above identity at $\vq=\vd=(0,\ldots,0,1)$  we obtain 
\begin{equation}
\label{eq:dvp1}
\partial^{p+1}_nV(\vd) = (2-p)\partial^{p}_nV(\vd).
\end{equation}
By induction we assume that $\VE{{p-1}}$ reduces to $\VE{1}$, so
$\xi^n_{(0,\ldots,0, p-1)} \simeq \partial^{p}_nV(\vd) = 0 $. Hence we
also have   $\xi^n_{(0,\ldots,0, p)} =0$.

Summarizing, we prove that $\xi^j_{\valpha}=0$ for all $1\leq j \leq
n$, and all $\valpha$ such that $\abs{\valpha}=p$.

\paragraph{Third Step.} Finally, notice that we show that  
\begin{equation*}
\partial^{\valpha} V(\vd) =0, \mtext{with} 3\leq\abs{\valpha}\leq
p+1. 
\end{equation*}
Thus  $\VE{p+1}$ has the same form as the assumed form of $\VE{p}$. Hence the
induction hypothesis  go on. 

We can conclude the proof of the theorem  observing that the first 
step of the induction procedure applies  to $\VE{2}$, that is when
$p=2$. Indeed the first step goes without any change.  Moreover, in
the second step with $p=2$,  the identity~\eqref{eq:dvp1} gives 
\begin{equation*}
\partial^{3}_nV(\vd) = 0\, \partial^{2}_nV(\vd)=0.
\end{equation*}
So, we have the correct initial step of  induction. This ends the proof.

\subsection{The Group $\gal(\VE{2})$ in resonant cases when $k=2$}
\label{ssec:rescase}
The following two lemmas give the necessary and sufficient conditions
which guarantee that $\gal(\VE{2,\alpha}^{\gamma})$ and
$\gal(\EX{2,(\alpha,\beta)}^{\gamma})$ are virtually Abelian.
\begin{lemma}
\label{lem:va1} 
The group  $\gal(\VE{2,\alpha}^{\gamma})$ is virtually Abelian iff  either
$\theta^{\gamma}_{\alpha,\alpha}=0$,  or
$\theta^{\gamma}_{\alpha,\alpha}\neq0$, and 
$\omega_{\alpha},\omega_{\gamma}\in\Q^{\star}$.
\end{lemma}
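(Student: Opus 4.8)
The plan is to prove the equivalence by its two implications, working throughout over the ground field $K=\C(\rme^{\rmi t})$ with $\varphi(t)=\sin t$, and using that $\VE{2,\alpha}^{\gamma}$ in \eqref{eq:ve2ag} is an instance of the master equation \eqref{eq:ob}, whose solving second level integrals are described by Lemma~\ref{lem:b}. The trivial sufficiency is the case $\theta^{\gamma}_{\alpha,\alpha}=0$: then \eqref{eq:ve2ag} decouples into two constant-coefficient equations $\ddot x=-\omega_\alpha^2 x$ and $\ddot z=-\omega_\gamma^2 z$, whose joint Picard--Vessiot extension $K(\rme^{\rmi\omega_\alpha t},\rme^{\rmi\omega_\gamma t})$ has diagonal, hence Abelian, Galois group. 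So $\gal(\VE{2,\alpha}^{\gamma})$ is (virtually) Abelian.

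For the remaining sufficiency, assume $\theta^{\gamma}_{\alpha,\alpha}\neq0$ and $\omega_\alpha,\omega_\gamma\in\Q^{\star}$. First I would note that then $E_\alpha:=\rme^{\rmi\omega_\alpha t}$ and $E_\gamma:=\rme^{\rmi\omega_\gamma t}$ are \emph{algebraic} over $K$: if $\omega_\alpha=p/q$ then $E_\alpha^{q}=(\rme^{\rmi t})^{p}\in K$, and similarly for $E_\gamma$. Hence the field $L_1=K(E_\alpha,E_\gamma)$ produced by Lemma~\ref{lem:b} is a finite algebraic extension of $K$, so $\widetilde K=L_1$ and $\gal(L_1/K)$ is finite. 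Every second level integral $\Phi$ generating $L/L_1$ then satisfies $\Phi'\in T(L_1/K)\subset L_1$, i.e. the $\Phi$ are in fact first level integrals over $L_1$; therefore $\gal(L/L_1)$ is unipotent (a vector group), hence Abelian. Since $\gal(L/L_1)$ is normal of finite index in $\gal(L/K)$ it contains the identity component $\gal(L/K)^{\circ}$, which is thus Abelian. (Equivalently, this is the converse direction of Theorem~\ref{thm:dudu} with $R_1=0$.)

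The core of the argument is the necessity, which I would prove in contrapositive form: if $\theta^{\gamma}_{\alpha,\alpha}\neq0$ and $\omega_\alpha,\omega_\gamma$ are not both in $\Q^{\star}$, then $\gal(\VE{2,\alpha}^{\gamma})$ is not virtually Abelian. Writing $x=c_1E_\alpha+c_2E_\alpha^{-1}$ and expanding $x^2=c_1^2E_\alpha^2+2c_1c_2+c_2^2E_\alpha^{-2}$, Lemma~\ref{lem:b} shows that $L/L_1$ is generated by second level integrals which, up to the nonzero factor $\theta^{\gamma}_{\alpha,\alpha}$, are trigonometric integrals $T_1^{(\mu)}=\int \frac{\rme^{\rmi\mu t}}{\sin t}\,\rmd t$ with $\mu$ running over $\pm2\omega_\alpha\pm\omega_\gamma$ and over $\pm\omega_\gamma$ (the latter from the cross term $2c_1c_2$), each carrying a coefficient proportional to $\theta^{\gamma}_{\alpha,\alpha}$. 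The key observation is that some such monomial $M=\rme^{\rmi\mu t}$ is \emph{transcendental} over $K$, i.e. has $\mu\notin\Q$: if $\omega_\gamma\notin\Q$ take $\mu=\omega_\gamma$; if $\omega_\alpha\notin\Q$, then $(2\omega_\alpha+\omega_\gamma)+(2\omega_\alpha-\omega_\gamma)=4\omega_\alpha\notin\Q$ forces one of $\mu=2\omega_\alpha\pm\omega_\gamma$ to be irrational. Using Lemma~\ref{lem:mul}(1) to separate out the corresponding integral $\Phi_i=\theta^{\gamma}_{\alpha,\alpha}T_1^{(\mu)}\in L$, I would pass to the sub-Picard--Vessiot extension $K(M)\subset K(M,\Phi_i)\subset L$ (generated by the single $\Phi_i$ over $K(M)$, with $M$ transcendental), which is a quotient situation so that virtual Abelianity of $\gal(L/K)$ descends to it. Lemma~\ref{lem:mul}(2) then yields a constant $c_i$ with $(\Phi_i+c_i)/M\in K$. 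As $M$ is entire and nonvanishing and every element of $K$ is meromorphic on $\C$, this would make $\Phi_i$ meromorphic on $\C$; but $\theta^{\gamma}_{\alpha,\alpha}\neq0$ while, by Lemma~A1 of the Appendix, $T_1^{(\mu)}$ is never meromorphic (its integrand has nonzero residue $(-1)^{k}\rme^{\rmi\mu k\pi}$ at every $t=k\pi$, producing a logarithm). This contradiction establishes the failure of virtual Abelianity.

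I expect the main obstacle to be the bookkeeping in this last step: one must guarantee that, under an arbitrary relation between $\omega_\alpha$ and $\omega_\gamma$, at least one surviving monomial simultaneously carries a nonzero coefficient and an irrational exponent, and that it can be isolated by Lemma~\ref{lem:mul}(1) (the non-proportionality hypothesis $M_i/M_j\notin K$ is exactly what collapses in the fully rational case, which is precisely why that case is instead virtually Abelian). A secondary point needing care is the degenerate situation $\omega_\alpha=0$ or $\omega_\gamma=0$ (also excluded from $\Q^{\star}$): there Lemma~\ref{lem:b}(1) contributes integrals with a polynomial factor $t$, but the same non-meromorphy obstruction applies, since $t/\sin t$ again has nonzero residues at the poles $t=k\pi$ with $k\neq0$.
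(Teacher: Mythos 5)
Your two sufficiency arguments and your treatment of the case $\omega_{\alpha},\omega_{\gamma}\in\C^{\star}$ with at least one irrational match the paper's proof (its Case~4): there too one isolates a monomial $\rme^{\rmi\mu t}$ with $\mu\in\{\pm2\omega_{\alpha}\pm\omega_{\gamma},\pm\omega_{\gamma}\}$ irrational, applies Lemma~\ref{lem:mul}, and contradicts the non-meromorphy of $T_1^{(\mu)}$ via Lemma~\ref{lem:a1}. Your explicit argument that $4\omega_{\alpha}\notin\Q$ forces one of $2\omega_{\alpha}\pm\omega_{\gamma}$ to be irrational is a nice way to make the paper's existence claim concrete.

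The genuine gap is in the degenerate cases $\omega_{\alpha}=0$ or $\omega_{\gamma}=0$, which you relegate to a ``secondary point'' but which occupy three of the paper's four cases and require a different tool. When one of the frequencies vanishes, $t$ enters $L_1$ as a \emph{first-level} (additive) integral, not as an exponential, so Lemma~\ref{lem:mul} does not apply; the relevant second level integrals are the mixed objects $P_1=\int t/\sin t$ or $M_{1,\omega}=\int t\,\rme^{\rmi\omega t}/\sin t$ of the form $\int wI$ with $I=t$, and the paper handles them with the additive Lemma~\ref{lem:add}, which you never invoke. Your substitute claim --- that ``the same non-meromorphy obstruction applies, since $t/\sin t$ again has nonzero residues'' --- does not close the argument: there is no statement in the paper (nor an obvious one) asserting that virtual Abelianity forces the mixed integral $P_1$ itself to be meromorphic. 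Indeed Theorem~\ref{thm:dudu} only gives $\Phi=R_1+J$ with $R_1\in T(F_1/K)$, and $R_1$ need not be meromorphic. The correct mechanism, via Lemma~\ref{lem:add}, is that virtual Abelianity forces $ct-\int w\in K$ for some constant $c$, i.e.\ it is the \emph{weight} integral $\int w$ (namely $P_0$, $T_{\omega_{\gamma}}$ or $T_{2\omega_{\alpha}}$, depending on the case) whose meromorphy is contradicted by Lemma~\ref{lem:a1} --- not the mixed integral you point at. Until you run the additive lemma in these three configurations, the necessity direction is only established when both frequencies are nonzero.
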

\begin{lemma}
\label{lem:va2} 
The group $\gal(\EX{2,(\alpha,\beta)}^{\gamma})$  is virtually Abelian iff  either
$\theta^{\gamma}_{\alpha,\beta}=0$,  or
$\theta^{\gamma}_{\alpha,\beta}\neq0$, and 
$\omega_{\alpha},\omega_{\beta},\omega_{\gamma}\in\Q^{\star}$.
\end{lemma}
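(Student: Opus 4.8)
The plan is to follow the pattern of Lemma~\ref{lem:va1}, the only change being that the quadratic source term $x^2$ is replaced by the bilinear term $xy$. Throughout I work over $K=\C(\rme^{\rmi t})$ and abbreviate $\theta:=\theta^\gamma_{\alpha,\beta}$. The case $\theta=0$ is immediate: the system \eqref{eq:vx2abg} then splits into three independent constant-coefficient oscillators, each with Abelian Galois group over $K$, so by Remark~\ref{rem:nhsplit} the group $\gal(\EX{2,(\alpha,\beta)}^\gamma)$ is Abelian, a fortiori virtually Abelian. Hence I assume $\theta\neq0$ from now on.

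First I would record the structure of the second level integrals. Let $L_1$ be the Picard--Vessiot extension generated by the solutions $\rme^{\pm\rmi\omega_\alpha t}$, $\rme^{\pm\rmi\omega_\beta t}$ of the first two equations together with the solutions of the homogeneous $z$-equation, and let $L=\PV(\EX{2,(\alpha,\beta)}^\gamma)$. Expanding $xy$ over a fundamental system, the driving term is a $\C$-combination of the exponentials $\rme^{\rmi(\epsilon_\alpha\omega_\alpha+\epsilon_\beta\omega_\beta)t}$ with $\epsilon_\alpha,\epsilon_\beta\in\{\pm1\}$; applying Lemma~\ref{lem:b} to the last equation together with the superposition principle of Remark~\ref{rem:nhsplit}, the extension $L/L_1$ is generated by the trigonometric integrals $T_1^{(\omega_\epsilon)}$ with $\omega_\epsilon=\epsilon_\alpha\omega_\alpha+\epsilon_\beta\omega_\beta+\epsilon_\gamma\omega_\gamma$ (and, should some frequency vanish, by the $t$-weighted integrals of the first case of Lemma~\ref{lem:b}).

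For the sufficiency direction I would argue that if $\omega_\alpha,\omega_\beta,\omega_\gamma\in\Q^\star$ then each $\rme^{\rmi\omega_i t}$ is algebraic over $K$ and no factor $t$ occurs, so $L_1/K$ is a finite algebraic extension and $\gal(L_1/K)$ is finite. In the exact sequence $0\to H\to G\to\gal(L_1/K)\to0$ of Section~\ref{sec:theo}, the kernel $H\cong C^q$ is the connected vector group attached to the second level integrals; finiteness of $\gal(L_1/K)$ forces $G^\circ\subseteq H$, which is Abelian, whence $G:=\gal(\EX{2,(\alpha,\beta)}^\gamma)$ is virtually Abelian. Equivalently, here $\widetilde K=L_1$, so the expansion demanded by Theorem~\ref{thm:dudu} is automatic.

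The necessity direction is where the real work lies. Assuming $G$ virtually Abelian and $\theta\neq0$, I would first show every $\omega_\epsilon$ is rational. Suppose some $\omega_\epsilon\notin\Q$, so $\rme^{\rmi\omega_\epsilon t}$ is transcendental over $K$. The Artin--Dedekind argument of Lemma~\ref{lem:mul}(1) isolates the single integral $T_1^{(\omega_\epsilon)}\in L$; passing to the sub-extension $L_1(T_1^{(\omega_\epsilon)})/K$, whose Galois group is a virtually Abelian quotient of $G$, the multiplicative criterion Lemma~\ref{lem:mul}(2) (whose hypothesis $\trdeg=1$ is now met) gives a constant $c$ with $\bigl(T_1^{(\omega_\epsilon)}+c\bigr)/\rme^{\rmi\omega_\epsilon t}\in K$, making $T_1^{(\omega_\epsilon)}$ meromorphic on $\C$; this contradicts Lemma~A1, since $\rme^{\rmi\omega_\epsilon t}/\sin t$ has nonzero residues at the zeros of $\sin t$. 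Hence all eight $\omega_\epsilon\in\Q$, and subtracting the combinations $\pm\omega_\alpha\pm\omega_\beta\pm\omega_\gamma$ yields $\omega_\alpha,\omega_\beta,\omega_\gamma\in\Q$. It then remains to exclude a vanishing frequency: if, say, $\omega_\gamma=0$ then $t\in L_1$ and $L/L_1$ carries an integral $\int t\,\rme^{\rmi\omega' t}/\sin t\,\rmd t$, and the necessary condition $\sigma(\Phi)-\Phi\in T(L_1/K)$ of Theorem~\ref{thm:dudu} forces the additive parameter $c(\sigma)$ in $\sigma(t)=t+c(\sigma)$ to vanish on $G^\circ$, which is impossible; the cases $\omega_\alpha=0$ or $\omega_\beta=0$ are identical. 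This gives $\omega_\alpha,\omega_\beta,\omega_\gamma\in\Q^\star$. I expect the main obstacle to be precisely the resonant bookkeeping in this last step: when the exponential monomials $\rme^{\rmi\omega_\epsilon t}$ become mutually proportional one must group them so that Lemma~\ref{lem:mul}(1) still isolates one genuinely transcendental frequency, and one must lean on the Appendix residue computations guaranteeing that $T_1^{(\omega)}$ is never meromorphic.
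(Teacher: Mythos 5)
Your sufficiency argument and the case $\theta^{\gamma}_{\alpha,\beta}=0$ match the paper, and your treatment of the fully non-resonant situation (all three frequencies nonzero) via Lemma~\ref{lem:mul} is exactly the paper's final case. The gap is in the ordering of the two steps of your necessity argument. You first claim that \emph{every} combination $\omega_\epsilon=\pm\omega_\alpha\pm\omega_\beta\pm\omega_\gamma$ is rational by invoking Lemma~\ref{lem:mul}, and only afterwards exclude a vanishing frequency. But Lemma~\ref{lem:mul} requires $L_1$ to be generated by exponentials $E_i$ with $E_i'/E_i\in K$ and the integrands to be monomials in the $E_i$; if, say, $\omega_\alpha=0$, the solution space of $\ddot x=0$ is $\vspan_{\C}\{1,t\}$, so $t\in L_1$ and the second level integrals acquire factors $t^d$ (your own parenthetical remark), and the hypotheses of Lemma~\ref{lem:mul} fail outright --- the obstacle is not merely that monomials become mutually proportional. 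Consequently the rationality of the $\omega_\epsilon$ is not established in the degenerate cases, and your subsequent exclusion of a zero frequency tacitly relies on it: the identity $\sigma(\Phi)-\Phi=c(\sigma)T_{\omega'}+d(\sigma)$ that you need for $\Phi=\int t\,\rme^{\rmi\omega' t}/\sin t\,\rmd t$ holds only once the multiplicative character $\chi$ by which $\rme^{\rmi\omega' t}$ transforms is known to be trivial on $G^{\circ}$; in general one gets $\sigma(\Phi)-\Phi=(\chi(\sigma)-1)\Phi+\chi(\sigma)c(\sigma)T_{\omega'}+d(\sigma)$, and one must additionally rule out a linear dependence of $\Phi$ and $T_{\omega'}$ modulo $L_1$ before concluding that $c(\sigma)$ vanishes identically.

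The paper avoids this circularity by organizing the necessity proof according to the vanishing pattern of $(\omega_\alpha,\omega_\beta,\omega_\gamma)$ (six cases up to the $\alpha\leftrightarrow\beta$ symmetry, Table~\eqref{tabc}). In every case with a zero frequency it base-changes to a field $\widehat K$ obtained by adjoining the relevant exponentials to $K$, so that the weight $w=\rme^{\rmi\omega' t}/\sin t$ lies in $\widehat K$ and $L_1=\widehat K(t)$, and then applies the additive Lemma~\ref{lem:add} with $I=t$ to conclude that some $T_{\omega}$ or $P_0$ would have to be meromorphic on $\C$, contradicting Lemma~\ref{lem:a1}; only in the remaining case $\omega_\alpha\omega_\beta\omega_\gamma\neq0$ does it invoke Lemma~\ref{lem:mul}. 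Your proof becomes correct if you adopt this ordering: dispose of the resonant cases first with Lemma~\ref{lem:add} over $\widehat K$, exactly as in Cases 1--5 of the proof of Lemma~\ref{lem:va1}, and reserve the Artin--Dedekind/multiplicative argument for the non-resonant case.
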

\subsubsection{Proof of Lemma~\ref{lem:va1}}
In this subsection we denote by $L/K$ the Picard-Vessiot extension of
$\VE{2,\alpha}^{\gamma}$.  The field $L$ contains the field $L_1$,
where $L_1/K$ is the Picard-Vessiot extension associated with the
system of homogeneous equations $\ddot x +\omega_{\alpha}^2x=0$ and
$\ddot z +\omega_{\gamma}^2z=0$.

If $\theta_{\alpha,\alpha}^{\gamma}=0$, then $\VE{2,\alpha}^{\gamma}$
is a subsystem of \VE{1}, therefore $\gal(L/K)$ is virtually Abelian.

Let us observe also that if both $\omega_{\alpha}$ and
$\omega_{\gamma}$ belongs to $\Q^{\star}$, then the field
$L_1=K(\rme^{\rmi \omega_{\alpha} t }, \rme^{\rmi \omega_{\gamma} t
})$ is a finite extension of $K$. Moreover, the field $L/L_1$ is
generated by a finite set of integrals of first level with respect to
$L_1$. As a consequence, we have
\begin{equation*}
  \gal^{\circ}(L/K)=\gal(L/L_1)\simeq G_{\mathrm{a}}^p,
\end{equation*}
for a certain positive integer $p$. Hence, $\gal(L/K)$ is virtually
Abelian.

The above observations show that we have to consider the case
$\theta_{\alpha,\alpha}^{\gamma}\neq0$. But in this case, without loss
of the generality, we can assume that
$\theta_{\alpha,\alpha}^{\gamma}=1$. To investigate whether or not 
$\gal{L/K}$ could be virtually Abelian, we need a description of the
field $L_1$, and of the extension $L/L_1$. These informations are collected in the following table.
\begin{equation}
  \label{tab:1}
  \begin{array}{ccccc}
    \toprule
    \text{case} &  (\omega_{\alpha} ,\omega_{\gamma}) & L_1 & \vspan_{\C}(x^2)
    &\sin(t)\dot \Phi \\
    \midrule 
    \addlinespace[0.5em]
    1&  (0, 0) & \{t\} & \{1,t,t^2\} &
    \{1,t,t^2,t^3 \}\\
    \addlinespace[0.5em]
    2&  (0, \star) & \left\{t, \rme^{\rmi\omega_{\gamma}t}\right\}&
    \{1,t,t^2\} &
    \left\{t^d\rme^{\pm\rmi\omega_{\gamma}t}, 0\leq d\leq 2\right\}\\
    \addlinespace[0.5em]
    3& (\star, 0) & \{t, \rme^{\rmi\omega_{\alpha}t}\} &
    \left\{1,\rme^{\pm2\rmi \omega_{\alpha}t}\right\} &
    (1,\rme^{\pm2\rmi \omega_{\alpha}t}, t,t\rme^{\pm
      2\rmi \omega_{\alpha}t}) \\
    \addlinespace[0.5em]
    4&  (\star,\star) & \left\{\rme^{\rmi\omega_{\alpha}t},\rme^{\rmi\omega_{\gamma}t}\right\} &
    \left\{1,\rme^{\pm2\rmi \omega_{\alpha}t}\right\}&
    \left\{\rme^{\pm\rmi \omega_{\gamma}t}    ,\rme^{\rmi(\pm  \omega_{\gamma}\pm2\omega_{\alpha})t}\right\}\\\addlinespace[0.3em]
    \bottomrule
  \end{array}
\end{equation}
Let us explain the origin of this table.  The exact form of the field
$L_1$ depends on whether $\omega_{\alpha}$ or $\omega_{\gamma}$
vanish. This gives us four non-equivalent cases. The symbol $\star$ in the
second column means that the corresponding $\omega$ is not zero.  The column marked by
$L_1$ contains generators of the extension $L_1/K$.

Now, in the formulae of Lemma~\ref{lem:b}:
\[ \frac{b (t)}{\sin (t)} = \frac{x^2}{\sin (t)}, \] where $x$ belongs to
the solution space $\vspan_{\C}(x) := \Sol\{ \ddot{x} +
\omega_{\alpha}^2 x = 0\}$. This is why, $b (t)$ spans the second
symmetric power of this vector space. That is:
\[ \vspan_{\C}(b) := \vspan_{\C}(x)^{2\,\circledS},
\]
is a three dimensional vector space over $\C$. Now, the  second
level integrals $\Phi$ given by Lemma~\ref{lem:b} are such that for
$\omega_{\gamma} = 0$,
\begin{equation*}
  \Phi = \int \frac{b (t)}{\sin (t)}\, \rmd t \mtext{or}
  \Phi = \int \frac{tb (t)}{\sin (t)} \,\rmd t 
\end{equation*}
Hence, $\sin (t) \dot{\Phi}$, spans the vector space
\[ \vspan_{\C}(b) + t \cdot \vspan_{\C}(b) . \] Similarly, from
Lemma~\ref{lem:b} again, if $\omega_{\gamma} \neq 0$, we get that
$\sin(t) \dot{\Phi}$ spans
\[ \rme^{\mathrm{i} \omega_{\gamma} t} \cdot \vspan_{\C}(b) + \rme^{-
  \mathrm{i} \omega_{\gamma} t} \cdot \vspan_{\C}(b) . \] These
considerations immediately give the last column of the table.

As a consequence, the second level integrals $\Phi$ involved in the
problem are of three types
\begin{equation}
  \label{eq:tt}
  T_{\omega} := \int \frac{\rme^{\mathrm{i} \omega t}}{\sin (t)} \,\rmd
  t, \quad P_d := \int \frac{t^d}{\sin (t)}\, \rmd t, \mtext{or} M_{d, \omega} := \int \frac{t^d
    \rme^{\mathrm{i} \omega t}}{\sin (t)}\, \rmd t,
\end{equation}
where $\omega\in\C$ and $n\in\N$.  These are the ``trigonometric'',
``polynomial'' and the ``mixed'' integrals. Observe further that $T_0
= P_0 = M_{0, 0}$.  According to Lemma A.1 of the Appendix, all these
integrals are not meromorphic on $\C$ and are therefore transcendental over $K=\C(\rme^{\rmi t})$.

Now, in the three first cases of table~\eqref{tab:1},
\[ I := t \in L_1 . \] We shall therefore apply
Lemma~\ref{lem:add}, to show that in these three cases,
$\gal(L / K)$ is not virtually Abelian.

\paragraph{Case 1.} According to the Table, we can write $L_1 = K
(I)$.  Moreover, we also have 
\[ P_1 = \int \frac{t}{\sin (t)} dt = \int \dot{P_0} I \in L,
\mtext{with} \dot{P_0} = \frac{1}{\sin (t)} \in K. \] So, $\Phi = P_1$
is a second level integral with respect to $K$ of the form appearing
in Lemma~\ref{lem:add}. Hence, according to it, we get the
implication
\[
\gal(L / K) \mtext{is virtually Abelian}\Longrightarrow \quad cI -
\int \dot{P_0} = c t - P_0 \in K,
\]
for a certain $c\in\C$.  In particular this implies that $P_0$ is
meromorphic on $\C $ since $K =\C (e^{\mathrm{i} t})$ is contained
into $\scM(\C )$ the field of meromorphic functions on $\C $. But
this is not true thanks to Lemma A.1, so $\gal(L / K)$ is not
virtually Abelian.

\paragraph{Case 2.} Here $\omega_{\alpha} = 0$ but $\omega_{\gamma}
\neq 0$.  Hence, we can write $L_1 = K (t, e^{\mathrm{i}
  \omega_{\gamma} t}) = \widehat{K} (I)$ with $\widehat{K} := K
(e^{\mathrm{i} \omega_{\gamma} t})$. We therefore get the implication
\[ \gal(L / K) \mtext{is virtually Abelian} \Longrightarrow\quad
\gal(L / \widehat{K}) \mtext{is virtually Abelian,} \] and we are
reduced to proving that $\gal(L / \widehat{K})$ is not virtually
Abelian.

By setting $I = t$ again, according to table~\eqref{tab:1},
\[ M_{1, \omega_{\gamma}} = \int \frac{t\rme^{\mathrm{i}
    \omega_{\gamma} t}}{\sin (t)} dt = \int \dot{T}_{\omega_{\gamma}}
I \in L, \mtext{with} {\dot T}_{\omega_{\gamma}} = \frac{^{}
  e^{\mathrm{i} \omega_{\gamma} t}}{\sin (t)} \in \widehat{K} , \] is
a second level integral with respect to $\widehat{K}$ of the form
appearing into Lemma~\ref{lem:add}. So we can apply this result
to the field extension $L / \widehat{K}$. We therefore get the
implication
\[ \gal(L / \widehat{K}) \mtext{is virtually Abelian } \Longrightarrow
\quad c I -\int \dot{T}_{\omega_{\gamma}} = c t - T_{\omega_{\gamma}}
\in \widehat{K} ,
\]
for a certain $c\in\C$.  But again this is not true, since
$\widehat{K} \subset \mathcal{M}(\C )$ and $T_{\omega_{\gamma}}$ is
not meromorphic on $\C$.

\paragraph{Case 3.}Is very similar to the previous one. Here we set
$\widehat{K} := K (e^{\mathrm{i} \omega_{\alpha} t})$, and now
\[ \Phi = M_{1, 2 \omega_{\alpha}} \in L. \] Therefore, if $\gal(L /
\widehat{K})$ is virtually Abelian, then $T_{2 \omega_{\alpha}}$ is
meromorphic, which again is not true.

\paragraph{Case 4.}Here we shall apply Lemma~\ref{lem:mul}. In this case,
$L_1 = K (\exp (\mathrm{i} \omega_{\alpha} t), \exp (\mathrm{i}
\omega_{\gamma} t))$ and
\[ L / L_1 = L_1 (T_{\pm 2 \omega_a \pm \omega_{\gamma}}, T_{\pm
  \omega_{\gamma}}), \] is generated by eight integrals of first level
with respect to  $K$. Our task is now to prove the implication
\[ \gal(L / K) \text{ virtually Abelian } \Rightarrow \omega_{\alpha},
\omega_{\gamma} \in \Q ^{\ast} . \] Let us therefore assume that there
exist $\omega_0 \in \{\omega_{\alpha}, \omega_{\gamma} \}$, with
$\omega_0 \not\in \Q $. The corresponding exponential $\exp
(\mathrm{i} \omega_0 t)$ is not algebraic over $K$, and the
quasi-toroidal extension $L_1 / K$ has the transcendental degree
$\geq 1$.  Moreover, there exists $\omega \in \{\pm 2
\omega_{\alpha} \pm \omega_{\gamma}, \pm \omega_{\gamma} \}$ which is
not rational. So we can apply  Lemma~\ref{lem:mul}, with $\Phi :=
T_{\omega}$ in order to get the implication
\[ \gal(L / K) \text{ virtually Abelian } \Rightarrow
\frac{T_{\omega}+c}{\exp (\mathrm{i} \omega t)} \in K. \] But this
further implies that $T_{\omega}$ is meromorphic on $\C$.
Since it is not true, the claim follows.

These considerations finishes the proof of  Lemma~\ref{lem:va1}. 

\subsubsection{Proof of Lemma~\ref{lem:va2} }

Here, as in the proof of Lemma~\ref{lem:va1}, we denote by $L / K$ the PV extension of
$\ensuremath{\operatorname{EX}}^{\gamma}_{2, \alpha, \beta}$. We denote also
by $L_1 / K$ the PV extension obtained by adding to $K$ the three solutions
spaces 
\[ 
\begin{split}
\vspan_{\C}(x) :=&
\operatorname{Sol}\{ \ddot{x} + \omega_{\alpha}^2 x = 0\},\\\
\vspan_{\C}(y) := &\operatorname{Sol}\{
\ddot{y} + \omega_{\beta}^2 y = 0\}, \\ \vspan_{\C}(z):=&\operatorname{Sol}\{
\ddot{z} + \omega_{\gamma}^2 z = 0\}. 
\end{split}
\]
As before, $L_1 \subset L$.

When $\omega_{\alpha}, \omega_{\beta}, \omega_{\gamma} \in
\Q ^{\ast}$, $L_1 = K (\exp (\mathrm{i} \omega_{\alpha} t), \exp
(\mathrm{i} \omega_{\beta} t), \exp (\mathrm{i} \omega_{\gamma} t))$ is a
finite extension of $K$, and $L / L_1$ is generated by a finite set of first
level integrals with respect  to $L_1$. Hence, $\gal(L /
K)$ is virtually Abelian since its connected component is a vector group.

When $\theta_{\alpha, \beta}^{\gamma} = 0$, then
$\ensuremath{\operatorname{EX}}^{\gamma}_{2, \alpha, \beta}$ is a subsystem of
$\ensuremath{\operatorname{VE}}_1$, hence is virtually Abelian. As a
consequence, it is enough  to prove that away from these two cases,
$\gal(L / K)$ is not virtually Abelian. We may
therefore assume without loss of generality that $\theta_{\alpha,
\beta}^{\gamma} = 1$. The collected information about $L_1 / K$ and $L / L_1$
are contained in the following table.
\begin{equation}
\label{tabc}
\begin{array}{ccccc}
     \toprule
     &( \omega_{\alpha} , \omega_{\beta} , \omega_{\gamma}) & L_1 / K &
     \vspan_{\C}(xy)
     & \sin (t) \dot{\Phi}\\
      \midrule 
    \addlinespace[0.5em]
     1 & (0 , 0 , 0) & \{t\} & \{1, t, t^2 \} & 1, t, t^2, t^3\\
      \midrule 
    \addlinespace[0.5em]
     2 &( 0 , 0 , \star) & \{t, \rme^{\mathrm{i} \omega_{\gamma} t}\} & \{1, t,
     t^2 \} & t^d \rme^{\pm \mathrm{i} \omega_{\gamma} t}, d= 0,1,2 
  \\
      \midrule 
    \addlinespace[0.5em]
     3 & (\star, 0 , 0) & \{\rme^{\mathrm{i} \omega_{\alpha} t}, t\} & \{
     \rme^{\pm \mathrm{i} \omega_{\alpha} t}, t \rme^{\pm \mathrm{i}
     \omega_{\alpha} t}\} & t^d \rme^{\pm \mathrm{i} \omega_{\alpha} t}, d=0,1,2
    \\
      \midrule 
    \addlinespace[0.5em]
     4 & (\star, 0 , \star) & \{\rme^{\mathrm{i} \omega_{\alpha} t}, t, 
     \rme^{\mathrm{i} \omega_{\gamma} t}\} & \{\rme^{\pm \mathrm{i} \omega_{\alpha}
     t}, t \rme^{\pm \mathrm{i} \omega_{\alpha} t}\} & t^d \rme^{\pm
     \mathrm{i} \omega_{\alpha} t \pm \mathrm{i} \omega_{\gamma} t},
   d=0, 1 \\
      \midrule 
    \addlinespace[0.5em]
     5 &( \star, \star , 0) & \{\rme^{\mathrm{i} \omega_{\alpha} t}, 
\rme{\mathrm{i} \omega_{\beta} t}, t\} & \{\rme^{\pm \mathrm{i}
     \omega_{\alpha} t \pm \mathrm{i} \omega_{\beta} t}\} & t^d \rme^{\pm
     \mathrm{i} \omega_{\alpha} t \pm \mathrm{i} \omega_{\beta} t}, d=0,1
     \\
      \midrule 
    \addlinespace[0.5em]
     6 & (\star , \star ,\star) & \{\rme^{\mathrm{i}
     \omega_{\alpha} t}, \rme^{\mathrm{i} \omega_{\beta} t}, \rme^{\mathrm{i}
     \omega_{\gamma} t}\} & \{\rme^{\pm \mathrm{i} \omega_{\alpha} t \pm
     \mathrm{i} \omega_{\beta} t}\} & \rme^{\pm \mathrm{i} \omega_{\alpha} t
     \pm \mathrm{i} \omega_{\beta} t \pm \mathrm{i} \omega_{\gamma} t}\\
     \bottomrule
   \end{array} 
\end{equation}

As in Table~\eqref{tab:1}, the cases are distinguished by the
vanishing of at least one component in the triple
$(\omega_{\alpha},\omega_{\beta},\omega_{\gamma})$. Thus, in general
we have eight possibilities. However, due to the form
$\ensuremath{\operatorname{EX}}^{\gamma}_{2, \alpha, \beta}$, and the
fact that $\theta_{\alpha, \beta}^{\gamma} = \theta_{\beta,
  \alpha}^{\gamma}$, the indicies $\alpha$ and $\beta$ play a symmetric
role.  This is why we  have to deal with only six distinct cases. The
Column $L_1 / K$ gives a set of generators of this extension. 
The column $\vspan_{\C}(xy)$ gives a basis of the vector space
\[ \vspan_{\C}(b) =\vspan_{\C}(xy)
   =\vspan_{\C}(x) \circledS
   \vspan_{\C}(y) . \]
Observe that in contrast with Table~\eqref{tab:1},  this $\C $ vector space can
be of dimension 3, or 4. Now the elements of the last column are computed
thanks to the same rule as in Table~\eqref{tab:1}.

From the last column, one can easily see that  all the second level
integrals which appear in this context are of the type described in the proof
of Lemma~\ref{lem:va1}. We can
therefore perform the proof in a similar way as before. Precisely, for Cases 1
to 5, $\gal(L / K)$ is not virtually
Abelian thanks to Lemma~\ref{lem:add}. In Case 6, the use of Lemma~\ref{lem:mul},
gives the implication
\[ \gal(L / K) \text{ virtually Abelian }
   \Rightarrow \omega_a, \omega_{\beta}, \omega_{\gamma} \in
   \Q ^{\ast} . \]
This ends the proof of this Lemma.

\section{Potentials of degree $k=-2$ and, Proof of Theorem~\ref{thm:k=-2}}

In our paper \cite{Maciejewski:09::}, we found a correspondence between the
first variational equations $\VE{1}$ of potentials of degree $
k$ and of degree $-k$.  Moreover, we used it to get the implication
\[ \mathrm{Gal} (\mathrm{VE}_1) \mtext{virtually Abelian for}
   k = 2 \Rightarrow \mathrm{Gal} (\mathrm{VE}_1) \mtext{virtually
     Abelian for} k = - 2. \]
The present theorem shows that the later correspondence cannot hold at the
level of the $\mathrm{VE}_p$ for $p \geqslant 2$. And  because of this
we get Galois
obstruction along the Darboux points for $k = 2$, although there is no such
obstruction for potentials of degree $k = - 2$.

\subsection{Ingredients for the proof}

Let $k = - 2$, and $\vd \in \C^n \setminus \{0\}$ be a proper Darboux
point satisfying $V' (\vd) = \vd$.  The corresponding phase curve is 
$\vGamma_{e} = \{(\varphi \vd, \dot{\varphi} \vd) \in \mathbb{C}^{2 n}
\}$, where 
\[ 
\dot{\varphi}^2 =
   e + \dfrac{1}{\varphi^2}, 
 \]
and  $e\in\C$ is a fixed value of the energy.  At first glance we analyse the solutions of the
equation
\begin{equation}
\label{eq:sub}
\ddot{x} = -\frac{\lambda}{\varphi^4} x, \qquad \lambda\in\C,
\end{equation}
which is a sub-equation of \VE{1}. 
The explicit form $\varphi$
depends on $e$. To investigate solutions of equation~\eqref{eq:sub} we
introduce a parameter $\omega$ defined by  $\lambda=1-4\omega^2$, and
two functions of time $I$, and $E_{\omega}$. They are defined in the
table below.
\begin{equation}
\label{tab:fei} 
\begin{array}{ccc}
     \toprule
      & e = 0 & e \neq 0\\
     \midrule
     \varphi & \varphi := \sqrt{2 t} & \varphi := \sqrt{e t^2 - \dfrac{1}{e}}\\
     \midrule
     E_{\omega}   & E_{\omega} := t^{\omega} &
     E_{\omega} : = \left( \dfrac{et - 1}{et + 1}\right)^{\omega}\\
     \midrule
     I & I := \log (t) & I := \log \left( \dfrac{et - 1}{et + 1}\right)\\
     \bottomrule
   \end{array} 
 \end{equation}
 \begin{proposition}
 \label{prop:k=-2} 
  Let us consider equation~\eqref{eq:sub} with  $\lambda=1-4\omega^2$.
  If $\omega = 0$, then its solution space  is generated by $\varphi$ and $\varphi
  I$; otherwise it is generated by $\varphi
  E_{\omega}$ and $\varphi E_{- \omega}$.
  
  The functions $I$ and $E_{\omega}$ satisfy the following relations
  \[ \dot{E}_{\omega} = \frac{2 \omega}{\varphi^2} E_{\omega}, \qquad \dot{I} =
     \frac{2}{\varphi^2} . \]
\end{proposition}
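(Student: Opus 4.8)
The plan is to prove the proposition by \emph{direct verification}, since the candidate solutions are given explicitly in Table~\eqref{tab:fei}. The first step is to record the single differential consequence of the energy relation that drives every computation: differentiating $\dot\varphi^2 = e + \varphi^{-2}$ gives $2\dot\varphi\ddot\varphi = -2\dot\varphi\,\varphi^{-3}$, hence
\[ \ddot\varphi = -\frac{1}{\varphi^3}, \]
which is just equation~\eqref{eq:one} specialised to $k=-2$ and $\gamma=1$. The second step is to confirm the two auxiliary relations. Observing that $E_\omega = \rme^{\omega I}$ in both columns of Table~\eqref{tab:fei}, it suffices to check $\dot I = 2/\varphi^2$; this is an elementary logarithmic derivative in each of the cases $e=0$ (where $\varphi^2 = 2t$ and $I = \log t$) and $e\neq 0$ (where $\varphi^2 = et^2 - \tfrac1e$ and $I = \log\frac{et-1}{et+1}$). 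The relation $\dot E_\omega = (2\omega/\varphi^2)E_\omega$ then follows at once from $\dot E_\omega = \omega\dot I\,E_\omega$.

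The heart of the argument is the substitution $x = \varphi E_\omega$ into~\eqref{eq:sub}. Using $\dot E_\omega = (2\omega/\varphi^2)E_\omega$ one finds
\[ \dot x = \left(\dot\varphi + \frac{2\omega}{\varphi}\right)E_\omega, \]
and differentiating once more, the cross terms $-2\omega\dot\varphi/\varphi^2$ and $+2\omega\dot\varphi/\varphi^2$ cancel, leaving
\[ \ddot x = \left(\ddot\varphi + \frac{4\omega^2}{\varphi^3}\right)E_\omega = \frac{4\omega^2 - 1}{\varphi^3}\,E_\omega = -\frac{\lambda}{\varphi^4}\,x, \]
where the last two equalities use $\ddot\varphi = -\varphi^{-3}$ and $\lambda = 1 - 4\omega^2$. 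The same computation with $\omega$ replaced by $-\omega$ shows that $\varphi E_{-\omega}$ is also a solution. For $\omega\neq 0$ these two solutions are independent: a short Wronskian computation gives $W = x_1\dot x_2 - \dot x_1 x_2 = -4\omega\,E_\omega E_{-\omega} = -4\omega \neq 0$ (using $E_\omega E_{-\omega} = 1$), so $\varphi E_\omega$ and $\varphi E_{-\omega}$ span the two-dimensional solution space.

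It remains to treat the degenerate value $\omega = 0$, i.e. $\lambda = 1$. Here $E_0 \equiv 1$, so the computation above only yields the single solution $\varphi$; indeed the identity $\ddot\varphi = -\varphi^{-3}$ says precisely that $\varphi$ solves~\eqref{eq:sub} when $\lambda = 1$. The second solution is then produced by reduction of order: the standard formula gives $\varphi\int \varphi^{-2}\,\rmd t$, which equals $\tfrac12\varphi I$ thanks to $\dot I = 2/\varphi^2$, so the solution space is spanned by $\varphi$ and $\varphi I$, as claimed. I expect no genuine obstacle in this proof; the only points requiring care are the case-by-case verification of $\dot I = 2/\varphi^2$ from the explicit formulas in Table~\eqref{tab:fei}, and handling the degenerate $\omega=0$ branch separately, since there the two exponential solutions coalesce and the logarithmic solution $\varphi I$ must be supplied by reduction of order.
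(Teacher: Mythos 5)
Your proof is correct and follows exactly the route the paper intends: the paper states only that the proposition ``follows by direct computation and will be left to the reader,'' and your verification (the key identity $\ddot\varphi=-\varphi^{-3}$ from the energy relation, the check $\dot I=2/\varphi^2$ in both cases of Table~\eqref{tab:fei}, the substitution $x=\varphi E_{\pm\omega}$ with the Wronskian $-4\omega$, and reduction of order for $\omega=0$) is precisely that computation, carried out completely and accurately.
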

The proof of this proposition follows by direct computation and will be left to the reader. 

In \citep{Maciejewski:09::} we have shown that, for $k=-2$, the first
variational equations \VE{1} are solvable. Hence,
equation~\eqref{eq:sub} is solvable because it is a  sub-equation of
$\VE{1}$.  

Let us fix the  value of $e$, and denote by $K:=
\C (\varphi, \dot{\varphi})$. We consider the infinite dimensional
Picard-Vessiot extension  $F / K$  generated by the integral $I$ and
the exponentials 
$E_{\omega}$ with $\omega \in \C$. By Proposition~\ref{prop:k=-2},
$\gal{(F /K)}$ is virtually Abelian.
     
Let us consider the following vector sub-space of $\mathcal{B}$ of $F$:
\[ \mathcal{B} := \mathrm{Vect}_{\C} \{I^m E_{\omega} |m \in
   \N, \quad\omega \in \C\} . \]
We note here that this vector space contains $\C$ and remains stable by multiplication
since
\[( I^m E_{\omega} )\cdot (I^{m'} E_{\omega'}) = I^{m + m'} E_{\omega + \omega'}
   . \]

   The basic idea of our proof of Theorem~\ref{thm:k=-2} is to show
   that, with the use of vectorial notations, that we get the following inclusion
\[ \mathrm{Sol} (\VE{p}) \subset \varphi \mathcal{B}^n , \]
holds true for $p\in\N$.
Let us observe that such an inclusion will imply that $\mathrm{PV} (\mathrm{VE}_p)
\subset F$, and the result will follow. In order to get this inclusion we shall
need the two following lemmas.
\begin{lemma}
  \label{lem:int} For  $b (t) \in \mathcal{B}$, we have 
\[
 \Phi := \int \frac{b
  (t)}{\varphi^2} dt \in \mathcal{B}.
\]
\end{lemma}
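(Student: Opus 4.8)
The plan is to exploit the fact that $1/\varphi^2$ is, up to the factor $\tfrac12$, the derivative of the element $I\in\mathcal{B}$, together with the logarithmic-derivative relation for $E_\omega$ supplied by Proposition~\ref{prop:k=-2}. Since both integration and membership in $\mathcal{B}$ are $\C$-linear, and since $\mathcal{B}$ is by definition spanned by the monomials $I^m E_\omega$, it suffices to establish the claim for a single such monomial, i.e.\ to show $\int I^m E_\omega/\varphi^2\,\rmd t\in\mathcal{B}$ for every $m\in\N$ and $\omega\in\C$. Throughout I would use the two identities $\dot I = 2/\varphi^2$ and $\dot E_\omega = (2\omega/\varphi^2)E_\omega$.

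First I would dispose of the case $\omega=0$, where $E_0=1$. Here $I^m/\varphi^2 = \tfrac12 I^m\dot I = \tfrac{1}{2(m+1)}\frac{\rmd}{\rmd t}\bigl(I^{m+1}\bigr)$, so that $\int I^m/\varphi^2\,\rmd t = \tfrac{1}{2(m+1)}I^{m+1}$ lies in $\mathcal{B}$ outright. For $\omega\neq0$ I would argue by induction on $m$. Writing $E_\omega/\varphi^2 = \tfrac{1}{2\omega}\dot E_\omega$ gives the base case $\int E_\omega/\varphi^2\,\rmd t = \tfrac{1}{2\omega}E_\omega\in\mathcal{B}$. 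For the inductive step, set $J_m := \int I^m E_\omega/\varphi^2\,\rmd t$ and integrate by parts, writing $E_\omega/\varphi^2=\tfrac{1}{2\omega}\dot E_\omega$ and then differentiating the factor $I^m$ by means of $\dot I = 2/\varphi^2$. This yields the recursion
\[
J_m = \frac{1}{2\omega}\,I^m E_\omega - \frac{m}{\omega}\,J_{m-1}.
\]
Since $I^m E_\omega\in\mathcal{B}$ and $J_{m-1}\in\mathcal{B}$ by the induction hypothesis, it follows that $J_m\in\mathcal{B}$, completing the induction and hence the proof after reassembling the original linear combination.

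I do not expect a genuine obstacle here: the computation is elementary once the two derivation formulas are in hand. The only point that requires care — and which is really the content of the lemma — is that the antiderivative stays inside the span $\mathcal{B}$ rather than producing a new transcendental (a fresh logarithm or a fresh exponential). This closure is exactly what the recursion above guarantees, since it expresses $J_m$ as a finite $\C$-linear combination of monomials $I^{m'}E_\omega$ already present in $\mathcal{B}$; the underlying mechanism is that $1/\varphi^2$ is itself the derivative of an element of $\mathcal{B}$, so no new generator is ever introduced.
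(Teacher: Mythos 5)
Your proof is correct and follows essentially the same route as the paper: reduce by linearity to the monomials $I^m E_\omega$, handle $\omega=0$ by recognizing $I^m/\varphi^2$ as an exact derivative, and for $\omega\neq 0$ integrate by parts to obtain the same recursion $2\omega J_m = I^m E_\omega - 2m J_{m-1}$, concluding by induction on $m$. No gaps.
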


\begin{proof}
  Clearly, it is enough to show this for the integrals 
  \[ 
\Phi^{(m)}_{\omega} :=\int \frac{I^m E_{\omega}}{\varphi^2} dt \mtext{for}
m\in\N, \mtext{and} \omega\in\C.
\]
  According to Proposition~\ref{prop:k=-2}, we have:
  \[ \Phi^{(m)}_{0} := \int \frac{I^m}{\varphi^2} dt = \int \frac{I^m 
     \dot{I}}{2} dt = \frac{I^{(m + 1)}}{2 (m + 1)} \in \mathcal{B}.
 \]
Moreover,  for $\omega \neq 0$, we also have
 \[
\Phi^{(0)}_{\omega}:= \int
  \frac{E_{\omega}}{\varphi^2} dt = \frac{E_{\omega}}{2 \omega} \in
  \mathcal{B}. 
\]
 Now, if $m \geqslant 1$, we obtain 
  \begin{eqnarray*}
    2 \omega \Phi^{(m)}_{\omega} & = & \int I^m  \frac{2 \omega
    E_{\omega}}{\varphi^2} = \int I^m  \dot{E}_{\omega}\\
    & = & I^m E_{\omega} - \int mI^{m - 1}  \dot{I} E_{\omega}\\
    & = & I^m E_{\omega} - \int mI^{m - 1}  \frac{2}{\varphi^2} E_{\omega}. 
  \end{eqnarray*}
As a result we get 
\begin{equation*}
 2 \omega \Phi^{(m)}_{\omega}  =  I^m E_{\omega} - 2 m \Phi^{(m - 1)}_{\omega},
\end{equation*}
 and the result follows by induction with respect to $m$.
\end{proof}
The following result extends Proposition~\ref{prop:k=-2}.
\begin{lemma}
\label{lem:k=-2} The solution space of the equation 
\begin{equation}
\label{eq:hv-2}
\ddot{x} = -\frac{ \lambda}{\varphi^4} x +
  \frac{b(t)}{\varphi^3}, \qquad \lambda\in\C, \quad b(t)\in\mathcal{B},
\end{equation}
 is contained in the vector space $\varphi \mathcal{B}$.
\end{lemma}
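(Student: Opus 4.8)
The plan is to solve \eqref{eq:hv-2} by variation of constants, exactly as in the proof of Lemma~\ref{lem:b}, and then to check that the resulting formula lands in $\varphi\mathcal{B}$. Since $\varphi\mathcal{B}$ is a $\C$-vector space and the general solution of \eqref{eq:hv-2} is the sum of the homogeneous solution space and any one particular solution, and since by Proposition~\ref{prop:k=-2} the homogeneous solution space of \eqref{eq:sub} is spanned by $\{\varphi E_{\omega},\varphi E_{-\omega}\}$ (or by $\{\varphi,\varphi I\}$ when $\omega=0$), all of which already lie in $\varphi\mathcal{B}$, it suffices to exhibit a single particular solution inside $\varphi\mathcal{B}$.

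Writing \eqref{eq:hv-2} as a first-order system $\dot{\vx}=\vA\vx+\vb$ with $\vb=(0,\,b/\varphi^3)^T$, I would take the fundamental matrix $\vZ$ built from the homogeneous basis $\{x_1,x_2\}$ above, seek $\widehat{\vx}=\vZ\vc$ with $\dot{\vc}=\vZ^{-1}\vb$, and exploit the crucial structural feature that \eqref{eq:hv-2} carries no first-derivative term, so its Wronskian $W=x_1\dot{x}_2-x_2\dot{x}_1$ is constant. Using the relations $\dot{E}_{\omega}=(2\omega/\varphi^2)E_{\omega}$ and $\dot{I}=2/\varphi^2$ from Proposition~\ref{prop:k=-2}, together with $E_{\omega}E_{-\omega}=1$, one computes $W=-4\omega$ when $\omega\neq0$ and $W=2$ when $\omega=0$; in both cases $W$ is a nonzero constant, which is exactly what makes the variation-of-constants formula usable.

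The two components of $\dot{\vc}$ are then, up to the nonzero constant $1/W$, equal to $-x_2 b/\varphi^3$ and $x_1 b/\varphi^3$. Because each $x_i$ is $\varphi$ times a generator of $\mathcal{B}$ (namely $E_{\pm\omega}$, or $1$ and $I$), each such component has the shape $(gb)/\varphi^2$ for some $g\in\mathcal{B}$; and since $\mathcal{B}$ is stable under multiplication, $gb\in\mathcal{B}$. Lemma~\ref{lem:int} then yields $c_1,c_2\in\mathcal{B}$, and the particular solution $x_p=c_1 x_1+c_2 x_2$ is a sum of products of an element of $\mathcal{B}$ with an element of $\varphi\mathcal{B}$, hence lies in $\varphi\mathcal{B}$ by one further use of multiplicative stability. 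Adding back the homogeneous solutions keeps everything in $\varphi\mathcal{B}$, which is the desired inclusion.

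There is essentially no hard step here: all of the analytic content is already packaged in Lemma~\ref{lem:int}, so the argument is pure bookkeeping about where the various products land. The only point demanding a little care is the resonant case $\omega=0$, where $x_2=\varphi I$ forces one integrand to be $(Ib)/\varphi^2$ rather than $(E_{\pm\omega}b)/\varphi^2$; but $Ib\in\mathcal{B}$ since $I\in\mathcal{B}$, so Lemma~\ref{lem:int} applies verbatim. I would, accordingly, treat the two cases $\omega\neq0$ and $\omega=0$ in parallel and be explicit about the constancy and nonvanishing of $W$, since the division by $W$ is the one place where the argument could formally break.
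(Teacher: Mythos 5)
Your proposal is correct and follows essentially the same route as the paper: variation of constants with the fundamental system from Proposition~\ref{prop:k=-2}, reduction of the integrands to the form $(gb)/\varphi^2$ with $g\in\mathcal{B}$, and an appeal to Lemma~\ref{lem:int} together with the multiplicative stability of $\mathcal{B}$. Your explicit computation of the constant Wronskian ($W=-4\omega$, resp.\ $W=2$) is a welcome clarification of a point the paper leaves implicit in its formula for $\vX^{-1}$.
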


\begin{proof}
  For $b(t) = 0$, the thesis of the lemma  is a direct consequence of
  Proposition~\ref{prop:k=-2}. Hence, we assume that $b(t)\neq 0$, and
  we rewrite equation~\eqref{eq:hv-2} as a system 
\begin{equation}
\label{eq:sysk-2}
 \begin{bmatrix}
       \dot{x}\\
       \dot{y}
     \end{bmatrix} = \begin{bmatrix}
       0 & 1\\
       - \lambda \varphi^{-4} & 0
     \end{bmatrix}\begin{bmatrix}
       x\\
      y
     \end{bmatrix} + \begin{bmatrix}
       0\\
       b\varphi^{-3}
     \end{bmatrix}, \qquad b=b(t) . 
\end{equation}
A particular solution of this system is given by 
\begin{equation*}
 \begin{bmatrix}
       x\\
       y
     \end{bmatrix}=\vX \begin{bmatrix}
       c_1\\
       c_2
     \end{bmatrix},
\end{equation*}
where $\vX$ is a fundamental matrix of the homogeneous part of
equation~\eqref{eq:hv-2}, and $[c_1, c_2]^T$ is a certain solution of
the following equation 
\begin{equation*}
 \begin{bmatrix}
      \dot c_1\\
      \dot  c_2
     \end{bmatrix}= \vX^{-1}\begin{bmatrix}
       0\\
       b\varphi^{-3}
     \end{bmatrix}.
\end{equation*}

 Let us write $\lambda = 1 - 4 \omega^2$. In the case
 $\omega=0$,  by Proposition~\ref{prop:k=-2}, a
  fundamental matrix $\vX$ and its inverse are  given by 
\begin{equation*}
\vX= \begin{bmatrix} \varphi & \varphi I\\
       \dot{\varphi} & \dot\varphi I+ \varphi \dot I
     \end{bmatrix}, \qquad 
\vX^{-1}= \frac{1}{2}\begin{bmatrix} 
\dot\varphi I+ \varphi \dot I  & -\varphi I\\
       -\dot{\varphi} & \varphi 
     \end{bmatrix}.
\end{equation*}
Hence, we have 
\begin{equation*}
 \begin{bmatrix}
      \dot c_1\\
      \dot  c_2
     \end{bmatrix}= \frac{1}{2}
\begin{bmatrix}
       bI\varphi^{-2}\\
       b\varphi^{-2}
     \end{bmatrix},
\end{equation*}
so, by Lemma~\ref{lem:int}, $[ c_1, c_2]\in\mathcal{B}\times \mathcal{B}$.
Hence, $[ x, y]\in \varphi\mathcal{B}\times \varphi\mathcal{B}$
and this finishes the proof for $\omega=0$. 

For $\omega\neq0$, the fundamental matrix $\vX$ and its inverse for
homogeneous part of equation~\eqref{eq:sysk-2}, are given by 
\begin{equation*}
\vX= \begin{bmatrix} \varphi E_{\omega} & \varphi E_{-\omega} \\
       \dot{\varphi} E_{\omega}+\varphi {\dot E}_{\omega} &
       \dot\varphi E_{-\omega}+ \varphi {\dot E}_{-\omega}
     \end{bmatrix}, \qquad 
\vX^{-1}= \frac{1}{2\omega}\begin{bmatrix} 
 \dot\varphi E_{-\omega}+ \varphi {\dot E}_{-\omega} &-\varphi E_{-\omega}\\
       - \dot{\varphi} E_{\omega}\varphi {\dot E}_{\omega}  & \varphi E_{\omega} 
     \end{bmatrix},
\end{equation*}
so 
\begin{equation}
\label{eq:cexo}
 \begin{bmatrix}
      \dot c_1\\
      \dot  c_2
     \end{bmatrix}= \frac{1}{2\omega}
\begin{bmatrix}
       b\varphi^{-2} E_{-\omega}\\
       b\varphi^{-2} E_{\omega}
     \end{bmatrix}.
\end{equation}
Hence, also in this case, by Lemma~\ref{lem:int}, $[ c_1, c_2]\in\mathcal{B}\times \mathcal{B}$.
Hence, $[ x, y]\in \varphi\mathcal{B}\times \varphi\mathcal{B}$
and this finishes the proof. 

\end{proof}

\subsection{Proof of Theorem~\ref{thm:k=-2}}

\paragraph{The first step.}
With the use of vectorial notations, we show the following result.   
\begin{proposition}
For \VE{1} we have  $\mathrm{Sol} (\mathrm{VE}_1) \subset \varphi
\mathcal{B}^n$.
\end{proposition}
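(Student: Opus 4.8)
The plan is to reduce $\VE{1}$ to a triangular system of scalar equations of exactly the type already analysed, and then to solve it from the bottom up using Lemma~\ref{lem:k=-2}. First I would write $\VE{1}$ out explicitly along the Darboux phase curve. Since $\vF=-V'$ is homogeneous of degree $k-1=-3$, the homogeneity relation $\vF^{(i)}(\vq_0)=\varphi^{k-1-i}\vF^{(i)}(\vd)$ recorded earlier gives, for $i=1$, that $\vF'(\vq_0)=\varphi^{-4}\vF'(\vd)=-\varphi^{-4}V''(\vd)$. Hence $\VE{1}$ takes the form
\[ \ddot\vq_1 = -\varphi^{-4}\,V''(\vd)\,\vq_1 . \]

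Working over $\C$, I would then choose a constant invertible matrix $P$ putting $V''(\vd)$ into upper-triangular form with diagonal entries $\lambda_1,\dots,\lambda_n$; this is always possible and so lets me avoid assuming that the Hessian is diagonalisable. Writing $\vq_1=P\vu$ and $\lambda_i=1-4\omega_i^2$, the system becomes triangular,
\[ \ddot u_i = -\frac{\lambda_i}{\varphi^4}\,u_i - \frac{1}{\varphi^4}\sum_{j>i}c_{ij}u_j, \qquad 1\le i\le n, \]
with constants $c_{ij}\in\C$, whose diagonal part is precisely the sub-equation~\eqref{eq:sub}.

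Next I would argue by descending induction on $i$, from $i=n$ down to $i=1$, that each component $u_i$ of an arbitrary solution lies in $\varphi\mathcal{B}$. For the inductive step assume $u_j\in\varphi\mathcal{B}$ for every $j>i$. Because $\mathcal{B}$ is a $\C$-vector space, $\varphi\mathcal{B}$ is stable under $\C$-linear combinations, so $\sum_{j>i}c_{ij}u_j=\varphi\,b$ for some $b\in\mathcal{B}$, and one power of $\varphi^{-4}$ cancels against this factor of $\varphi$. The $i$-th equation therefore reads
\[ \ddot u_i = -\frac{\lambda_i}{\varphi^4}\,u_i + \frac{b}{\varphi^3}, \qquad b\in\mathcal{B}, \]
with the empty-sum convention $b=0$ at the base case $i=n$. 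This is exactly the situation of Lemma~\ref{lem:k=-2} (its $b=0$ instance being Proposition~\ref{prop:k=-2}), which yields $u_i\in\varphi\mathcal{B}$ and closes the induction. Finally, since $P$ has constant entries and $\varphi\mathcal{B}^n$ is a $\C$-vector space, $\vq_1=P\vu\in\varphi\mathcal{B}^n$, giving $\mathrm{Sol}(\VE{1})\subset\varphi\mathcal{B}^n$.

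The only delicate point, and really the heart of the argument, is the bookkeeping that repackages the triangular coupling into the precise shape $b/\varphi^3$ with $b\in\mathcal{B}$ required by Lemma~\ref{lem:k=-2}: this relies on the induction hypothesis supplying the cancelling factor of $\varphi$ and on the stability of $\mathcal{B}$ under $\C$-linear combinations. Everything else — chiefly the identification of the coefficient matrix as $-\varphi^{-4}V''(\vd)$ — is immediate from the homogeneity of $\vF$.
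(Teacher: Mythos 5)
Your proof is correct and follows essentially the same route as the paper: identify the coefficient matrix as $-\varphi^{-4}V''(\vd)$, reduce to a triangular system, and solve from the bottom up via Lemma~\ref{lem:k=-2}, with the coupling terms acquiring the factor $\varphi$ from the already-solved components so that they take the required form $b/\varphi^{3}$. The only (harmless, and slightly cleaner) difference is that you use a general upper-triangularisation of $V''(\vd)$ where the paper splits into the diagonalisable case and Jordan blocks, which it then treats by the same bottom-up induction.
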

\begin{proof}
The first order variational equation has the form 
\begin{equation}
\label{eq:ve1k-2}
\ddot \vq_1= -\varphi^{-4}V''(\vd)\vq_1. 
\end{equation}
We consider two possibilities: either  $V''(\vd)$ is  
diagonalisable, or it is not  diagonalisable.

If $V''(\vd)$ is   diagonalisable, then $\mathrm{VE}_1$ splits into
a direct sum of  equations which have the form 
\begin{equation}
\label{eq:ve1dir}
\ddot x= - \frac{\lambda }{\varphi^4} x, 
\end{equation}
where $\lambda$ is an eigenvalue of $V''(\vd) $. And the result follows from Lemma~\ref{lem:k=-2}.

If $V'' (d)$ is not diagonalisable, $\mathrm{VE}_1$ splits into subsystems
corresponding to Jordan block of the form
\begin{equation}
\label{eq:jor}
\left\{ \begin{array}{lll}
     \ddot{x} & = & - \lambda\varphi^{-4}x,\\
     \ddot{y} & = & - \lambda\varphi^{-4} y + \varphi^{-4}x, \\
     \ddot{z} & = & - \lambda\varphi^{-4} z + \varphi^{-4}y, \\
     \cdots & = & \cdots,\\
      \ddot{w}&=&  - \lambda\varphi^{-4} w.
   \end{array} \right.
\end{equation}

By Lemma~\ref{lem:k=-2}, an arbitrary solution $x$ of the first
equation can be written in  the form $x= \varphi b_{0}$, where
$b_{0}\in\mathcal{B}$. Hence, for this solution,  the second equation
in~\eqref{eq:jor} 
can be written in the form
\begin{equation*}
\ddot{y} =
-\frac{\lambda}{\varphi^4} y + \frac{b_0}{\varphi^3}.
\end{equation*}
Thus, by Lemma~\ref{lem:k=-2}, $y \in \varphi \mathcal{B}$. Inserting this
solution  into the third
equation~\eqref{eq:jor} we also get that $z \in \varphi \mathcal{B}$. 
If the size of the Jordan block is greater than three we proceed 
inductively.  As a
consequence we obtain that an arbitrary solution $\vq_1$ belongs to 
$\varphi \mathcal{B}^n$, and this finishes our proof.
\end{proof}
\paragraph{The induction procedure.}

Let us write the higher variational equation in Newton form, that is again, we set $\vF
(\vq) := - V' (q)$.  Here for  convenience, we  write explicitly  the first
three variational equations 
\[ \left\{ \begin{array}{lll}
     \ddot{\vq}_1 & = \dfrac{\vF' (\vd) (\vq_1)}{\varphi^4},  & \\
     \ddot{\vq}_2 & = \dfrac{\vF' (\vd) (q_2)}{\varphi^4} + \dfrac{F'' (\vd) (\vq_1,
     \vq_1)}{\varphi^5},  & \\
     \ddot{\vq}_3 & = \dfrac{F' (\vd) (\vq_1)}{\varphi^4} + 3 \dfrac{\vF'' (\vd) (\vq_1,
     \vq_2)}{\varphi^5} + \dfrac{\vF^{(3)} (\vd) (\vq_1, \vq_1, \vq_1)}{\varphi^6}. & 
   \end{array} \right. \]
As a consequence, in general,  we may write the $\mathrm{VE}_p$ in the form
\[ \ddot{\vq}_p = \frac{\vF' (\vd) (\vq_p)}{\varphi^4} + \frac{\vR_p}{\varphi^3}, \]
where $R_p$ is a $\C$-linear combination of terms of  the form
\[ \frac{\vF'' (\vd) (\vq_{i_1}, \vq_{i_2})}{\varphi^2}, \cdots, \frac{\vF^{(s)} (\vd)
   (\vq_{i_1}, \ldots, \vq_{i_s})}{\varphi^s}, \]
with $2 \leq s \leq p$, and $1 \leq i_k \leq p - 1$.

Now, let assume that by solving the $\mathrm{VE}_m$ for $1 \leq m
\leq p - 1$ we found that
\[ \vq_m \in \varphi \mathcal{B}^n . \]
In other words $\vq_m = \varphi \vB_m$ with some $\vB_m \in \mathcal{B}^n$. Since
$\vF^{(s)} (\vd)$ is a vectorial $s$-form with constant coefficients we  get
\[ \frac{\vF^{(s)} (\vd) (\vq_{i_1}, \ldots, \vq_{i_s})}{\varphi^s} = \frac{\vF^{(s)}
   (\vd) (\varphi \vB_{i_1}, \ldots, \varphi \vB_{i_s})}{\varphi^s} = F^{(s)} (d)
   (\vB_{i_1}, \ldots, \vB_{i_s}) . \]
But now, since $\mathcal{B}$ is stable by multiplication, it follows that
\[ \vR_p \in \mathcal{B}^n . \]
According to Lemma~\ref{lem:k=-2}, we can deduce that all the solutions $\vq_p$ of
the equations
\[ \ddot{\vq}_p = \frac{\vF' (\vd) (\vq_p)}{\varphi^4} + \frac{\vR_p}{\varphi^3}, \]
belong to $\varphi \mathcal{B}^n$.

This ends the proof of the theorem.

\appendix
\section{About the analytic behaviour of some trigonometric integrals}
We show the following.
\begin{lemma}
\label{lem:a1}
Let $f\in\scO(\C)$ be a holomorphic function such that $f(n\pi)\neq0$,
for a certain $n\in\Z$. Then, the function 
\begin{equation}
\label{eq:f}
t\longmapsto\ F(t):=\int \frac{f(t)}{\sin t}\, \rmd t,
\end{equation}
is not meromorphic on $\C$. 
\end{lemma}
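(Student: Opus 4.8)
The plan is to argue by contradiction, exploiting the elementary but decisive fact that the derivative of a meromorphic function has vanishing residue at every point of $\C$. Indeed, if $g$ is meromorphic near a point $a$, its Laurent expansion $g(t)=\sum_{j\geq -N}a_j(t-a)^j$ differentiates term by term to $g'(t)=\sum_{j\geq -N}j\,a_j(t-a)^{j-1}$, and the coefficient of $(t-a)^{-1}$ in $g'$ arises from the term with $j-1=-1$, i.e. $j=0$, which contributes $0\cdot a_0=0$. Hence $\res_{t=a}g'=0$ for every $a$. So it suffices to exhibit a single point at which the integrand $f(t)/\sin t$ has a non-zero residue.

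First I would analyse the pole structure of the integrand $h(t):=f(t)/\sin t$. Since $f$ is entire and $\sin$ has only simple zeros, located precisely at the points $m\pi$ with $m\in\Z$ and satisfying $\cos(m\pi)=(-1)^m\neq 0$, the function $h$ is meromorphic on $\C$ with at worst simple poles at the $m\pi$. A direct computation of the residue at such a point gives
\[
\res_{t=m\pi}\frac{f(t)}{\sin t}=\frac{f(m\pi)}{\cos(m\pi)}=(-1)^m f(m\pi).
\]
By hypothesis there is an $n\in\Z$ with $f(n\pi)\neq 0$, so $\res_{t=n\pi}h=(-1)^n f(n\pi)\neq 0$.

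Now suppose, for contradiction, that some antiderivative $F$ of $h$ were meromorphic on $\C$. Any two antiderivatives differ by a constant, which is meromorphic, so meromorphicity of one is equivalent to that of all. Then $F$ admits a Laurent expansion around $n\pi$, and by the observation of the first paragraph its derivative $F'=h$ must satisfy $\res_{t=n\pi}F'=0$. This contradicts $\res_{t=n\pi}h=(-1)^n f(n\pi)\neq 0$. Therefore no antiderivative $F$ is meromorphic, which is exactly the assertion of the lemma.

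I do not expect a genuine obstacle here: the only point requiring a little care is the bookkeeping in the first paragraph, namely that the residue of an exact meromorphic differential is always zero, together with the routine verification that $\sin$ contributes only simple poles so that the elementary residue formula applies. Conceptually, the non-vanishing residue means that locally $F$ behaves like $(-1)^n f(n\pi)\,\log(t-n\pi)$ plus a meromorphic remainder, and the logarithm has a branch point at $n\pi$, which is manifestly incompatible with meromorphy; the residue argument is simply the cleanest way to make this rigorous.
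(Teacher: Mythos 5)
Your proof is correct and rests on the same key fact as the paper's: the integrand $f(t)/\sin t$ has the non-zero residue $(-1)^n f(n\pi)$ at $t=n\pi$. The paper converts this into non-meromorphy via the monodromy $\cM_{n\pi}(F)=F+2\pi\rmi\,(-1)^n f(n\pi)$, while you use the equivalent observation that the derivative of a meromorphic function has vanishing residue everywhere; these are two phrasings of the same obstruction.
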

Indeed, if $\cM_{n\pi}$ denotes the monodromy operator around $t=n\pi$, we get
\begin{equation*}
\cM_{n\pi}(F) =F +2\pi \rmi f(n\pi).
\end{equation*}

For $\omega\in\C$ and $n\in\N$, we  define the following functions $T_{n}^{(\omega)}$
\begin{equation}
\label{eq:fon}
T^{(\omega)}_{n}(t):=\int \frac{\rme^{\rmi \omega
    t}}{\sin^nt}\,\rmd t.
\end{equation}
\begin{lemma}
\label{lem:a2}
The following statements holds true.
\begin{enumerate}
\item The function $T_{1}^{(\omega)}(t)$ is not meromorphic. 
\item  The function $T_{2}^{(\omega)}(t)$ is meromorphic iff $\omega=0$.
\end{enumerate}
\end{lemma}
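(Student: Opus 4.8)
The plan is to reduce both statements to the monodromy principle already recorded just after Lemma~\ref{lem:a1}: an antiderivative of a function that is meromorphic near a point $t_0$ gains, upon analytic continuation once around $t_0$, the additive term $2\pi\rmi$ times the residue of the integrand at $t_0$; hence a nonzero residue forces the antiderivative to be multivalued, and therefore non-meromorphic on $\C$.

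For the first statement I would simply invoke Lemma~\ref{lem:a1} with $f(t):=\rme^{\rmi\omega t}$. This function is entire and $f(0)=1\neq 0$, so the hypothesis $f(n\pi)\neq 0$ holds for $n=0$, and the lemma immediately yields that $T_1^{(\omega)}$ is not meromorphic, with no further work.

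For the second statement the direction $\omega=0\Rightarrow$ meromorphic is trivial, since $T_2^{(0)}=\int\sin^{-2}t\,\rmd t=-\cot t$ is meromorphic on $\C$. The content lies in the converse, and here Lemma~\ref{lem:a1} cannot be applied verbatim because $\sin^{-2}t$ has double (rather than simple) poles, so I would instead compute the residue of the integrand directly. Near $t=0$ one has $\sin^{-2}t=t^{-2}+\tfrac13+O(t^2)$ and $\rme^{\rmi\omega t}=1+\rmi\omega t+O(t^2)$, whence the Laurent expansion of $\rme^{\rmi\omega t}/\sin^2 t$ begins $t^{-2}+\rmi\omega\,t^{-1}+\cdots$; in particular its residue at $t=0$ equals $\rmi\omega$. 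Continuing $T_2^{(\omega)}$ once around $t=0$ therefore produces $\cM_0\big(T_2^{(\omega)}\big)=T_2^{(\omega)}+2\pi\rmi\cdot\rmi\omega=T_2^{(\omega)}-2\pi\omega$. When $\omega\neq 0$ this monodromy is nontrivial, so $T_2^{(\omega)}$ is multivalued and cannot be meromorphic, which establishes the remaining implication.

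I expect the single delicate point to be the residue extraction at the double pole: the dominant $t^{-2}$ term integrates to the meromorphic $-t^{-1}$ and contributes nothing to the monodromy, so the entire obstruction is carried by the $t^{-1}$ coefficient, which is produced precisely by the linear term $\rmi\omega t$ of $\rme^{\rmi\omega t}$. Once this coefficient is identified as $\rmi\omega$, the conclusion follows from the same monodromy computation used for Lemma~\ref{lem:a1}, with no further estimation required.
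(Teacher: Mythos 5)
Your proposal is correct and follows essentially the same route as the paper: the first part is the direct application of Lemma~\ref{lem:a1} with $f(t)=\rme^{\rmi\omega t}$, and the second part rests on the identical Laurent expansion $\rme^{\rmi\omega t}/\sin^2 t = t^{-2}+\rmi\omega\,t^{-1}+\cdots$ at $t=0$, together with $T_2^{(0)}=-\cot t$. The only difference is cosmetic: you spell out the monodromy increment $-2\pi\omega$ explicitly, whereas the paper leaves the step from ``nonzero residue'' to ``not meromorphic'' implicit.
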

\begin{proof}
The first statement immediately follows from Lemma~\ref{lem:a1}. In
order to prove the second one, we notice that in a neighborhood
of $t=0$ we have
\begin{equation*}
\sin^2t =t^2(1+\cdots), \mtext{and} \rme^{\rmi \omega t}=1+\rmi \omega
t +\cdots,
\end{equation*}
and thus
\begin{equation*}
\frac{\rme^{\rmi\omega t}}{\sin^2t}=\frac{1}{t^2}+\frac{\rmi \omega}{t}+\cdots.
\end{equation*}
Hence if $\omega\neq0$ the function $T_{2}^{(\omega)}(t)$ is not
meromorphic. For $\omega=0$, we have  $T_{2}^{(0)}(t)=-\cot t$,
is a meromorphic function on $\C$.
\end{proof}
\begin{proposition}
For $n>2$ the following relation holds true
\begin{equation}
\label{eq:recf}
T_{n}^{(\omega)}(t)=
 g_{n-2}^{(\omega)}(t)+c_{n-2}^{(\omega)} T_{n-2}^{(\omega)}(t),
\end{equation}
where
\begin{equation*}
 g_{n-2}^{(\omega)}(t):=-\rme^{\rmi\omega t}\frac{\rmi\omega\sin t
   +(n-2)\cos t}{(n-1)(n-2)\sin^{n-1}t}, \mtext{and}
 c_{n-2}^{(\omega)}:= \frac{(n-2)^2-\omega^2}{(n-1)(n-2)}.
\end{equation*}
\end{proposition}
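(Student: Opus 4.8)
The plan is to prove~\eqref{eq:recf} by direct differentiation. Both sides are indefinite integrals, hence defined only up to an additive constant, so it suffices to check that the two sides have the same derivative. Since $\bigl(T_{n-2}^{(\omega)}\bigr)' = \rme^{\rmi\omega t}/\sin^{n-2}t$ by the very definition~\eqref{eq:fon}, the whole identity collapses to the single functional equation
\[
\frac{\rmd\phantom{t}}{\rmd t}\, g_{n-2}^{(\omega)}(t) = \frac{\rme^{\rmi\omega t}}{\sin^n t} - c_{n-2}^{(\omega)}\,\frac{\rme^{\rmi\omega t}}{\sin^{n-2}t}.
\]
Everything therefore reduces to computing one derivative and recognising the right-hand side.

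First I would write $g_{n-2}^{(\omega)} = -\,\rme^{\rmi\omega t}\,N(t)\,\big/\bigl[(n-1)(n-2)\sin^{n-1}t\bigr]$ with numerator $N(t):=\rmi\omega\sin t+(n-2)\cos t$, so that $N'=\rmi\omega\cos t-(n-2)\sin t$. Differentiating by the product rule, using $(\rme^{\rmi\omega t})'=\rmi\omega\rme^{\rmi\omega t}$ and $(\sin^{-(n-1)}t)'=-(n-1)\cos t\,\sin^{-n}t$, and pulling out the common factor, one obtains
\[
\frac{\rmd\phantom{t}}{\rmd t}\, g_{n-2}^{(\omega)}(t) = -\,\frac{\rme^{\rmi\omega t}}{(n-1)(n-2)\sin^n t}\,B, \qquad B := \rmi\omega N\sin t + N'\sin t - (n-1)N\cos t.
\]

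The key simplifications are then twofold. Expanding $B$, the three terms proportional to $\rmi\omega\sin t\cos t$ carry coefficients $(n-2)$, $1$ and $-(n-1)$, whose sum is zero; thus all the imaginary cross terms cancel, which is exactly what forces the particular combination $\rmi\omega\sin t+(n-2)\cos t$ in the numerator of $g_{n-2}^{(\omega)}$. What survives is $-\omega^2\sin^2 t-(n-2)\sin^2 t-(n-1)(n-2)\cos^2 t$, and a single application of $\cos^2 t=1-\sin^2 t$ collapses the $\sin^2 t$ coefficients through $(n-1)(n-2)-(n-2)=(n-2)^2$ to give
\[
B = \big[(n-2)^2-\omega^2\big]\sin^2 t - (n-1)(n-2).
\]
Substituting this back and distributing the prefactor reproduces precisely $\rme^{\rmi\omega t}/\sin^n t - c_{n-2}^{(\omega)}\,\rme^{\rmi\omega t}/\sin^{n-2}t$, establishing the functional equation and hence~\eqref{eq:recf}.

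There is no genuine obstacle: the statement is an exact reduction formula and the proof is bookkeeping. The only points demanding care are the cancellation of the $\rmi\omega\sin t\cos t$ terms and the lone use of the Pythagorean identity that produces the constant $(n-2)^2-\omega^2$ defining $c_{n-2}^{(\omega)}$. For motivation one could instead \emph{derive} rather than verify the formula, by seeking an antiderivative of the shape $\rme^{\rmi\omega t}(a\sin t+b\cos t)\sin^{-(n-1)}t$ plus a multiple of $T_{n-2}^{(\omega)}$ and solving for $a$, $b$ and the multiple via integration by parts; but the verification above is shorter and entirely self-contained.
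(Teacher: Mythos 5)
Your proof is correct and follows exactly the route the paper itself takes: the paper's proof of this proposition consists of the single sentence that differentiating both sides of the recurrence and simplifying yields the identity, and your argument is simply that verification carried out in full detail (the cancellation of the $\rmi\omega\sin t\cos t$ cross terms and the use of $\cos^2 t=1-\sin^2 t$ to produce $(n-2)^2-\omega^2$ are exactly the ``some simplifications'' the authors leave implicit). Nothing to add.
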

\begin{proof}
Differentiating both sides of~\eqref{eq:recf} and making some
simplifications we obtain the identity.
\end{proof}
Now, we are ready to characterize all the cases when 
$T_{n}^{(\omega)}$ is meromorphic.
\begin{lemma}
The function
$T_{n}^{(\omega)}$ is meromorphic if and only if we either get
\begin{enumerate}
\item\label{item:1} $\omega\in\defset{\pm 2k}{0\leq k
    \leq (n-2)/2}$ for even $n$ and $n\geq 2$,
\item\label{item:2}$\omega\in\defset{\pm (1+2k)}{0\leq k
    \leq (n-1)/2}$ for odd $n$ and $n \geq 3$.
\end{enumerate}
\end{lemma}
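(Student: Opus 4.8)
The plan is to run an induction on $n$ built directly on the recursion~\eqref{eq:recf}, which was set up for exactly this purpose. The decisive observation is that the function $g_{n-2}^{(\omega)}$ occurring in~\eqref{eq:recf} is meromorphic on all of $\C$ for \emph{every} value of $\omega$: it is the product of the entire function $\rme^{\rmi\omega t}$ with a rational expression in $\sin t$ and $\cos t$, whose only singularities are poles located at $t\in\pi\Z$. Consequently, rewriting~\eqref{eq:recf} as $T_{n}^{(\omega)}-c_{n-2}^{(\omega)}T_{n-2}^{(\omega)}=g_{n-2}^{(\omega)}$, I read off that the meromorphy of $T_{n}^{(\omega)}$ on $\C$ is equivalent to the meromorphy of $c_{n-2}^{(\omega)}\,T_{n-2}^{(\omega)}$.

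This produces a clean recursive criterion valid for every $n>2$. Recall $c_{n-2}^{(\omega)}=\bigl((n-2)^2-\omega^2\bigr)/\bigl((n-1)(n-2)\bigr)$. If $c_{n-2}^{(\omega)}=0$, i.e. $\omega=\pm(n-2)$, then $T_{n}^{(\omega)}=g_{n-2}^{(\omega)}$ is meromorphic outright, \emph{regardless} of the behaviour of $T_{n-2}^{(\omega)}$. If instead $c_{n-2}^{(\omega)}\neq0$, dividing by this nonzero constant shows that $T_{n}^{(\omega)}$ is meromorphic if and only if $T_{n-2}^{(\omega)}$ is. Summarising, for all $n>2$ I would establish the equivalence: $T_{n}^{(\omega)}$ is meromorphic on $\C$ iff $\omega=\pm(n-2)$ or $T_{n-2}^{(\omega)}$ is meromorphic.

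It then remains to unfold this relation along the fixed parity of $n$, feeding in the two base cases furnished by Lemma~\ref{lem:a2}: $T_{1}^{(\omega)}$ is never meromorphic, whereas $T_{2}^{(\omega)}$ is meromorphic precisely when $\omega=0$. For even $n$ the chain terminates at $T_{2}^{(\omega)}$, and iterating accumulates exactly the values $\omega\in\{\pm(n-2),\pm(n-4),\dots,\pm2\}$ together with $\omega=0$ inherited from the base; this is the set of even integers with $\abs{\omega}\le n-2$, i.e. item~\ref{item:1}. For odd $n$ the chain terminates at $T_{1}^{(\omega)}$, which contributes nothing, leaving $\omega\in\{\pm(n-2),\pm(n-4),\dots,\pm1\}$, the odd integers with $\abs{\omega}\le n-2$, which is item~\ref{item:2}. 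At each descent the newly admitted pair $\pm(n-2)$ is distinct from all values collected at lower levels, so the union has exactly the stated size and no redundancy.

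The only genuinely delicate point I anticipate is the bookkeeping in the two branches of the recursive step: one must record the branch $c_{n-2}^{(\omega)}=0$ as forcing meromorphy \emph{independently} of $T_{n-2}^{(\omega)}$, and check that as $n$ decreases by $2$ the vanishing locus $\omega=\pm(n-2)$ contributes precisely one fresh admissible pair at each level while the remaining values are faithfully inherited from below. Everything else is immediate from the already-established identity~\eqref{eq:recf} and from Lemma~\ref{lem:a2}.
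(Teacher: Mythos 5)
Your proof is correct and follows essentially the same route as the paper: both arguments iterate the recursion~\eqref{eq:recf} down to the base cases of Lemma~\ref{lem:a2}, the paper by telescoping into a single identity $T_{n}^{(\omega)}=f_{n}^{(\omega)}+\bigl(\prod_k c^{(\omega)}_{\cdot}\bigr)T_{2}^{(\omega)}$ (resp.\ $T_1$) and reading off the vanishing of the product, you by descending two steps at a time with an explicit case split on $c_{n-2}^{(\omega)}=0$. One discrepancy you should not gloss over: for odd $n$ your (correct) bookkeeping yields $\omega\in\{\pm(1+2k)\mid 0\leq k\leq (n-3)/2\}$, i.e.\ odd $\omega$ with $\abs{\omega}\leq n-2$, whereas item~\ref{item:2} as printed allows $k\leq (n-1)/2$ and hence $\omega=\pm n$; this is wrong, since e.g.\ $T_{3}^{(3)}=g_{1}^{(3)}+c_{1}^{(3)}T_{1}^{(3)}$ with $c_{1}^{(3)}=-4\neq 0$ and $T_{1}^{(3)}$ not meromorphic. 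So your derivation in fact corrects an off-by-one in the stated range for odd $n$ rather than matching it; you should say so explicitly instead of asserting that your set ``is item~\ref{item:2}''.
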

\begin{proof}
Assume that $n$ is even. Then from relation~\eqref{eq:recf} it follows
that
\begin{equation*}
T_{n}^{(\omega)}(t) = f_{n}^{(\omega)}(t)+p_{(n)}(\omega) T_{2}^{(\omega)}(t),
\end{equation*}
where $f_{n}^{(\omega)}$ is a meromorphic function,  and
\begin{equation*}
p_{n}(\omega)=\prod_{k=0}^{(n-2)/2}c_{2k}^{(\omega)}=a_n
\prod_{k=0}^{(n-2)/2}\left[(2k)^2-\omega^2\right], \qquad a_n\neq0.
\end{equation*}
Thus, by Lemma~\ref{lem:a2}, $T_{n}^{(\omega)}$ is meromorphic
iff $p_{n}(\omega)=0$, and this occurs iff $\omega$ belongs to the
set given in the first statement. 

With the case of odd $n$ we proceed in a similar way. 
\end{proof}

\newcommand{\noopsort}[1]{}\def\cprime{$'$} \def\cprime{$'$}
  \def\polhk#1{\setbox0=\hbox{#1}{\ooalign{\hidewidth
  \lower1.5ex\hbox{`}\hidewidth\crcr\unhbox0}}} \def\cprime{$'$}
  \def\cydot{\leavevmode\raise.4ex\hbox{.}} \def\cprime{$'$} \def\cprime{$'$}
  \def\cprime{$'$} \def\cprime{$'$} \def\cprime{$'$}

\end{document}